\pgfplotsset{compat=1.15}
\def\@cite#1#2{[\textbf{#1\if@tempswa , #2\fi}]}
\newcommand{\sbullet}{\begin{picture}(1,1)(-0.5,-2)\circle*{2}\end{picture}}
\newcommand{\frarg}{\,\sbullet\,}
\definecolor{Purple1}{HTML}{ED1B23}
\definecolor{Purple2}{HTML}{9600ff}
\definecolor{Purple3}{HTML}{660066}
\newcommand{\bbf}{\mathbf{b}}
\DeclareMathOperator{\Tan}{Tan}
\newcommand{\Gsf}{\mathsf G}
\newcommand{\Ksf}{\mathsf K}
\newcommand{\Fsf}{\mathsf F}
\newcommand{\Acal}{\mathcal{A}}
\newcommand{\Bcal}{\mathcal{B}}
\newcommand{\Fcal}{\mathcal{F}}
\newcommand{\Gcal}{\mathcal{G}}
\newcommand{\Hcal}{\mathcal{H}}
\newcommand{\Lcal}{\mathcal{L}}
\newcommand{\Mcal}{\mathcal{M}}
\newcommand{\Ocal}{\mathcal{O}}
\newcommand{\Pcal}{\mathcal{P}}
\newcommand{\Qcal}{\mathcal{Q}}
\newcommand{\e}{\varepsilon}
\newcommand{\eps}{\varepsilon}
\renewcommand{\H}{\mathcal{H}}
\newcommand{\F}{\mathcal{F}}
\newcommand{\R}{\mathbb{R}}
\newcommand*{\genbf}[1]{\ifmmode\mathbf{#1}\else\textbf{#1}\fi}
\newcommand{\dist}{\mathrm{dist}}
\newcommand{\Osc}{\mathrm{Osc}}
\newcommand{\loc}{\mathrm{loc}}
\newcommand{\weakstarto}{\overset{\ast}{\rightharpoonup}}
\newcommand{\1}{\mathbbm{1}}
\renewcommand{\1}{\mathds1}
\newcommand{\dd}{\mathrm{d}}
\DeclareMathOperator{\spt}{spt}
\DeclareMathOperator{\diverg}{div}
\newtheorem{theorem}{Theorem}[section]
\newtheorem{lemma}[theorem]{Lemma}
\newtheorem{proposition}[theorem]{Proposition}
\newtheorem{corollary}[theorem]{Corollary}
\theoremstyle{definition}
\newtheorem{definition}[theorem]{Definition}
\newtheorem{remark}[theorem]{Remark}
\newtheorem{question}{Question}
\numberwithin{equation}{section}
\DeclareRobustCommand{\SkipTocEntry}[5]{}
\title[Local Poincar\'e constants and mean oscillation for $BV$ functions]{Local Poincar\'e Constants and \\ 
Mean Oscillation Functionals \\ 
for $BV$ functions}
\author{Adolfo Arroyo-Rabasa}
\address[A.\ Arroyo-Rabasa]{Université catholique de Louvain,
Building Marc de Hemptinne,
Chemin du Cyclotron 2,
1348 Louvain-la-Neuve, Belgium}
\email{adolfo.arroyo@uclouvain.be}
\author{Paolo Bonicatto}
\address[P.\ Bonicatto]{Università di Trento,
Dipartimento di Matematica,
Via Sommarive 5, 38123 Trento, Italy}
\email{paolo.bonicatto@unitn.it}
\author{Giacomo Del Nin}
\address[G.\ Del Nin]{Max Planck Institute for Mathematics in the Sciences, Inselstrasse 22, 04103 Leipzig, Germany}
\email{giacomo.delnin@mis.mpg.de}
\begin{document}

\begin{abstract}

	We introduce the concept of local Poincar\'e constant of a $BV$ function as a tool to understand the relation between its mean oscillation and its total variation at small scales. This enables us to study a variant of the BMO-type seminorms on $\eps$-size cubes introduced by Ambrosio, Bourgain, Brezis, and Figalli. More precisely, 
we relax the size constraint by considering a family of functionals that allow cubes of sidelength smaller than or equal to $\eps$. 
 These new functionals converge, as $\eps$ tends to zero, to a local functional defined on $BV$, which can be represented by integration in terms of the local Poincar\'e constant and the total variation. This contrasts with the original functionals, whose limit is defined on $SBV$ and may not exist for functions with a non-trivial Cantor part. 
    Moreover, we characterize the local Poincar\'e constant of a function with a cell-formula given by the maximum mean oscillation of its $BV$ blow-ups. As a corollary of this characterization, we show that the new limit functional extends the original one to all $BV$ functions. 
    Finally, we discuss rigidity properties and other challenging questions relating the local Poincar\'e constant of a function to its fine properties.
    \vspace{4pt}
    
\noindent\textsc{MSC (2020): 26B30, 26D10} (primary), \textsc{49Q20} (secondary)

\vspace{4pt}
\noindent\textsc{Keywords:} BMO, bounded variation, blow-up, Poincar\'e constant
\vspace{5pt}

\end{abstract}

\maketitle

\makeatletter
\def\@tocline#1#2#3#4#5#6#7{\relax
  \ifnum #1>\c@tocdepth 
  \else
    \par \addpenalty\@secpenalty\addvspace{#2}%
    \begingroup \hyphenpenalty\@M
    \@ifempty{#4}{%
      \@tempdima\csname r@tocindent\number#1\endcsname\relax
    }{%
      \@tempdima#4\relax
    }%
    \parindent\z@ \leftskip#3\relax \advance\leftskip\@tempdima\relax
    \rightskip\@pnumwidth plus4em \parfillskip-\@pnumwidth
    #5\leavevmode\hskip-\@tempdima
      \ifcase #1
       \or\or \hskip 1em \or \hskip 2em \else \hskip 3em \fi%
      #6\nobreak\relax
    \hfill\hbox to\@pnumwidth{\@tocpagenum{#7}}\par
    \nobreak
    \endgroup
  \fi}
\makeatother

\vspace{-1cm}
\setcounter{tocdepth}{2}
{
  \hypersetup{linkcolor=black}
\tableofcontents
}

\section{Introduction}

The Poincaré--Wirtinger inequality for a locally integrable function $f : \Omega \to \R$ on a (sufficiently regular) domain $\Omega \subset \R^n$ reads $$
\Osc(f,\Omega) \le C(\Omega) |Df|(\Omega)\,.
$$ 
This inequality establishes a quantitative relationship between the mean oscillation 
\[
    \Osc(f,\Omega) \coloneqq \fint_\Omega |f(x) - \fint_\Omega f|\,dx\,,
\] 
and the total variation
\[
|Df|(\Omega) \coloneqq \sup \left\{ \int_\Omega f(x)\, \diverg \varphi(x) \, dx : \varphi \in C^1_c(\Omega;\R^n), \|\varphi\|_\infty \le 1 \right\}\,.
\]

The optimal Poincar\'e constant for characteristic functions on the unit open cube $Q_0:=(-\frac12,\frac12)^n$ is $\frac12$, as shown in~\cite[Eqn. (2.2)]{ABBF}. This result can be generalized to all functions of bounded variation using a simple convexity argument based on the coarea formula. In particular, for an open cube $Q \subset \R^n$, a rescaling argument yields the optimal bound
\begin{equation}\label{eq:Poincare_on_cube}
    \ell(Q)^{n-1}\,\Osc(f,Q) \le \frac 12 |Df|(Q)\,, 
\end{equation}
where $\ell(Q)$ is the sidelength of $Q$. We record (cf. 
 Lemma~\ref{lemma:maximizers_poincare}) that equality in this bound is attained only by functions that exhibit a jump-type discontinuity across a hyperplane that bisects the cube and is parallel to one of its sides.

In view of~\eqref{eq:Poincare_on_cube}, one may wonder if it is possible to extract information about the total variation in terms of the mean oscillation (or related difference quotients). With this paradigm in mind, building on pioneering work of Bourgain et al.~\cite{BBM} (see also~\cite{BBM2,brezis2002recognize,davila2002open}), 
there has been a growing interest in characterizing Sobolev and $BV$ functions through BMO-type functionals.  
In this vein, Ambrosio et al.~\cite{ABBF_CRAS,ABBF} proposed to study the limiting behavior of the functionals 
	\begin{equation}\label{eq:def_K}
		\mathsf K_{\e}(f) \coloneqq \e^{n-1} \sup_{\mathcal H_\e}  \sum_{Q \in \mathcal H_\e} \Osc(f,Q)\,, \qquad f \in L^1_\loc(\R^n)\,,
	\end{equation}
    which measure the mean oscillation of a function $f$ over a collection $\Hcal_\eps$ of disjoint $\e$-size cubes with arbitrary orientation (an isotropic variant of the original functionals introduced in~\cite{BBM,BBM2}). 
    They showed that these functionals provide a characterization of sets of finite perimeter and suggested that they \emph{could} be extended to study certain $BV$ functions. This suggestion was particularly influential in sparking several contributions (e.g.,~\cite{AmbrosioComi,ARBDN22,DPFP,farroni2020formula,FMS,FMS2,lahti2023bmo,PS}) concerning the limiting behavior of~\eqref{eq:def_K} as $\eps$ approaches zero. In this regard, Ponce and Spector~\cite{PS}  and Fusco et al.~\cite{FMS2} established a  limiting lower bound for $\Ksf_\eps$, which together with the upper bound given by the Poincar\'e inequality yields  
	\begin{equation}\label{eq:FMS}
		\frac 14 |Df|(\R^n)  \le \liminf_{\eps \to 0}	\mathsf K_{\e}(f)\le \limsup_{\eps \to 0}	\mathsf K_{\e}(f)\le \frac 12 |Df|(\R^n)  \,.
	\end{equation}
Notice that the equi-boundedness of either limit is equivalent to the finiteness of the total variation. Hence, a natural question that arises from this discussion is  to understand those $BV$ functions for which the limit exists. On the one hand, De Philippis et al.~\cite{DPFP} (cf.~\cite{FMS}) proved that the limit exists for all functions $f \in SBV(\R^n)$ of \emph{special bounded variation} (functions with vanishing Cantor part $D^c f$). 
    More precisely, they showed that if $f \in SBV_\loc(\R^n)$, then 
    \begin{equation}\label{eq:DFP}
    \Ksf_0(f) := \lim_{\eps \to 0} \mathsf K_{\e} (f)= \frac14  |D^a f|(\R^n) + \frac 12 |D^j f|(\R^n) \,,
    \end{equation} 
    where $D^af$ and $D^jf$ denote, respectively, the absolutely continuous and the jump part of $Df$.  
   Using a Cantor-type construction (see Appendix \ref{app:Cantor}), 
   we show that there exist $BV$ functions with non-trivial Cantor part for which the limit $\Ksf_0$ does not exist. 
    Heuristically, this occurs because at a Cantor point $x \in \R^n$, the \emph{localized $\eps$-scale Poincar\'e constant}
        \[
         \eps \mapsto   \sup\left\{\eps^{n-1}\frac{\Osc(f,Q)}{|Df|(Q)}:\, x\in Q,\,\ell(Q)=\eps\right\}
        \]
    may oscillate wildly as $\eps$ tends to zero.
 
\addtocontents{toc}{\SkipTocEntry}
    \subsection*{Introducing a new functional: a scale relaxation} 
 To address the incompatibility of the scales that maximize the oscillation at Cantor points, we propose a scale-relaxation of the $\mathsf{K}_\eps$ functionals to a `less than or equal to $\eps$' scale constraint.  
 
Let $\Omega$ be an open subset of $\R^n$. We define a functional on $L^1_\loc(\Omega)$ by setting 
\begin{equation}\label{eq:G_eps_def_intro}
\Gsf_\eps(f,\Omega):=\sup_{\H_{\leq \eps}(\Omega)} \sum_{Q\in \H_{\leq\eps}(\Omega)} \ell(Q)^{n-1} \Osc(f,Q)\,,
\end{equation}
where $\H_{\leq\eps}(\Omega)$ is a family of disjoint open cubes $Q$ contained in $\Omega$ and having sidelength \emph{at most $\eps$}. Observe that the scale factor $\ell(Q)^{n-1}$ dwells now inside the sum, as it depends on each cube; the functional is free to choose the optimal scale at every location. 
This functional can be thought of as a geometric relaxation (from above) of $\Ksf_\eps$, since by construction 
\begin{equation}\label{eq:bound_above}
\Gsf_\eps(f,\R^n) \ge \Ksf_\eps(f), \qquad \text{for all $f \in L^1_\loc(\R^n)$.}
\end{equation}
It is straightforward to verify from the definition that $\eps \mapsto \mathsf{G}_\eps(f,\Omega)$ is non-decreasing.  This allows us to define the pointwise limit
\[
\Gsf(f,\Omega):=\lim_{\eps\to 0} \Gsf_\eps(f,\Omega) \qquad f \in L^1_\loc(\Omega)\,.
\]
A localized version of~\eqref{eq:FMS} and~\eqref{eq:bound_above}, together with Poincar\'e inequality, implies that $\Gsf(f,\Omega)$ is finite if and only if $|Df|(\Omega) < \infty$. Thus, in studying $\Gsf(f,\frarg)$, there is no loss of generality in requiring that $f \in BV_\loc(\R^n)$.  

Our main goal is to establish an integral representation for $\Gsf(f,\Omega)$ in terms of a quantity that we call the \emph{local Poincar\'e constant} of $f$, which we introduce next.

\addtocontents{toc}{\SkipTocEntry}
 \subsection*{Local Poincar\'e constants of \texorpdfstring{$BV$}{BV} functions}   
Let $f \in BV_\loc(\Omega)$. To better understand  the structural properties of $\Gsf(f,\Omega)$, let us express the sum appearing in \eqref{eq:G_eps_def_intro} as
 \begin{equation}\label{eq:quotient_expression}
    \sum_{Q \in \Hcal_{\le \eps}}  P_{f}(Q) |Df|(Q)\,, \qquad P_{f}(Q) \coloneqq \frac{\ell(Q)^{n-1}\Osc(f,Q)}{|Df|(Q)}\,.
 \end{equation} 
Here $P_{f}(Q)$ can be thought of as the Poincar\'e quotient of $f$ on $Q$. By the definition of $\Gsf_\eps$ as a supremum and $\Gsf$ as a limit of $\Gsf_\e$, it is natural to consider the optimal behavior of $P_{f}$ amongst all cubes $Q$, of sidelength at most $\eps$ (as $\e \to 0$) and containing a given point $x$. In fact, as we shall see next, we will need to consider a hierarchy of  infinitesimal Poincar\'e quotients containing additional information about the position of the point: Given $x \in \Omega$ and $\tau \in [0,1]$, we define the \emph{local $\tau$-Poincar\'e  constant of $f$ at $x$} as 
\[
    p_f^\tau(x) \coloneqq \begin{cases}
        \lim_{\eps \to 0} P_f^\tau(x,\eps) & \text{if $x \in \spt(|Df|)$} \\
        0 & \text{if $x \notin \spt(|Df|)$}
    \end{cases} \,,
\]
where $\tau Q$ denotes the concentric $\tau$-contraction of $Q$ (see Figure~\ref{fig:Poincare_constants}) and 
\begin{equation}\label{eq:P^tau} 
P^\tau_f(x,\eps):=\sup\left\{P_f(Q):\, x\in\tau Q,\, \ell(Q)\le \eps\right\}.
\end{equation} 
Observe that $p_f^\tau \ge p_f^\eta$ whenever $\tau 
\ge \eta$.  Moreover, if $f \in {BV}_\loc(\R^n)$, then $p_f^\tau$ defines a non-negative Borel function on $\spt(|Df|)$.

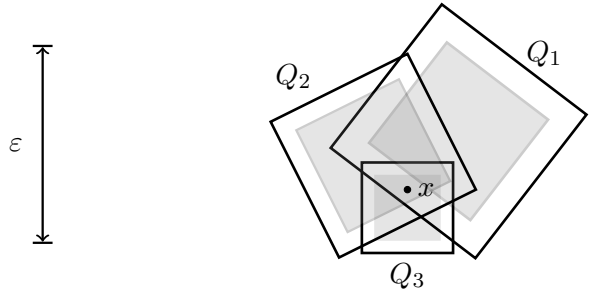
\begin{figure}
    \centering
    	\begin{tikzpicture}[domain=-1.5:1.5,samples=400,scale=1.2]

\begin{scope}[rotate=90]
	\draw[line width=0.35mm, gray, fill=gray, opacity=.2](-.55,-.35) rectangle (.15,.35);
	\draw[line width=0.35mm] (-.70,-.5) rectangle (.30,.5);
    \node at (1.5,-1.5) {$Q_1$}; 
    \node at (1.25,1.25) {$Q_2$}; 
    \node at (-0.95,0) {$Q_3$}; 
	
	\draw[line width=0.35mm] 
	(2.0469676402971926,-0.3765949409126396) -- (0.46069480196423596,0.8414931028539452) -- (-0.7573932418023488,-0.7447797354790111) -- (0.8288795965306076,-1.9628677792455962) -- cycle; 
	
	\draw[line width=0.35mm, fill=gray, opacity=.2] (1.6263135079822613,-0.43182266009759535) -- (0.5159225211491918,0.420838970539014) -- (-0.3367391094874176,-0.6895520162940554) -- (0.7736518773456519,-1.5422136469306649) -- cycle;

    \draw[line width=0.35mm, scale=.75] (0,-1) -- (2,0) -- (1,2) -- (-1,1) -- cycle;
    \draw[line width=0.3mm, fill=gray, opacity=.2, scale=.75] (0.125,-0.625) -- (1.625,0.125) -- (0.875,1.625) -- (-0.625,0.875) -- cycle;
	
		
	\filldraw (0,0) circle (1pt ) node[right] {$x$};

\end{scope}

		
	
  \draw[|<->|,line width=0.3mm] (-4,-0.6) -- (-4,1.6) node [midway,xshift=-10pt]{$\varepsilon$};
  
\end{tikzpicture}
\caption{The shaded area depicts the $\tau$-contraction $\tau Q_i$ of each cube $Q_i$ ($i = 1,2,3$). Only $Q_2$ and $Q_3$ are admissible for $P_f^\tau(x,\eps)$.}
\label{fig:Poincare_constants}
\end{figure}

\addtocontents{toc}{\SkipTocEntry}
\subsection*{The integral representation problem} 
A heuristic reasoning, thinking of \eqref{eq:quotient_expression} as a Riemann sum, suggests the validity of the following integral representation: if $f \in BV_\loc(\Omega)$, then
\begin{equation}\label{eq:conj}
\mathsf{G}(f,\Omega) = \int_\Omega p_f^1(x) \, d|Df|(x)\,.
\end{equation}
Our first main result demonstrates that such an integral representation holds in terms of $\tau$-Poincar\'e constants for $\tau$ sufficiently close to $1$. The precise statement is contained in the following theorem:

\begin{theorem}\label{thm:p_tau_equals_g}
There exists a dimensional constant $\tau(n) \in (0,1)$ with the following property: if $f \in BV_\loc(\Omega)$, then
\[
    \Gsf(f,\Omega) = \int_\Omega p_f^\tau(x) \, d|Df|(x)
\]
for all $\tau(n) \le \tau < 1$. 
\end{theorem}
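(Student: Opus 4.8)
The plan is to establish the two matching inequalities $\Gsf(f,\Omega)\ge\int_\Omega p_f^\tau\,d|Df|$ and $\Gsf(f,\Omega)\le\int_\Omega p_f^\tau\,d|Df|$ separately: the first for \emph{every} $\tau\in(0,1)$, by a Vitali covering argument; the second only once $\tau$ exceeds an explicit dimensional threshold $\tau(n)$, through a quantitative version of the cube Poincar\'e inequality. After localizing we may assume $|Df|(\Omega)<\infty$, since $\Gsf(f,\cdot)$ is inner regular. Note that, once both are known, the sandwich forces $p_f^\tau$ to be $|Df|$-almost everywhere constant in $\tau$ on $[\tau(n),1)$, so the restriction $\tau\ge\tau(n)$ is sharp only through the upper bound.

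\emph{Lower bound, $\tau\in(0,1)$.} Fix $\delta>0$ and, using $p_f^\tau\le\tfrac12$, split $\spt(|Df|)\cap\Omega$ (modulo a $|Df|$-null set, and after an inner-regularity reduction to disjoint compacta) into finitely many pieces $E_j=\{(j-1)\delta\le p_f^\tau<j\delta\}$. For $|Df|$-a.e.\ $x\in E_j$ and every small $\eps$, the definition of $p_f^\tau$ as the decreasing limit of $P_f^\tau(x,\eps)$ yields an open cube $Q\subset\Omega$ with $x\in\tau Q$, $\ell(Q)\le\eps$, $P_f(Q)\ge p_f^\tau(x)-\delta$. Because $x\in\tau Q$ traps $Q$ between the concentric cubes at $x$ of sidelengths $(1-\tau)\ell(Q)$ and $(1+\tau)\ell(Q)$, these cubes have bounded eccentricity $\tfrac{1+\tau}{1-\tau}<\infty$ with respect to $x$, so a Vitali-type covering theorem for the Radon measure $|Df|$ extracts from each $E_j$ a countable disjoint subfamily covering $|Df|$-almost all of it; these subfamilies can be made mutually disjoint (shrink the cube sizes below the gaps between the compacta). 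Their union $\H$ is admissible for $\Gsf_\eps(f,\Omega)$, hence
\begin{align*}
\Gsf_\eps(f,\Omega)
&\ge\sum_{Q\in\H}\ell(Q)^{n-1}\Osc(f,Q)
=\sum_{Q\in\H}P_f(Q)\,|Df|(Q)\\
&\ge\sum_j\big((j-2)\delta\big)\,|Df|(E_j)
\ge\int_\Omega p_f^\tau\,d|Df|-C\delta\,|Df|(\Omega).
\end{align*}
Sending $\eps\to0$ and then $\delta\to0$ gives the claim. It is precisely at $\tau=1$ that the eccentricity bound, and with it this argument, degenerates.

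\emph{Upper bound, $\tau\ge\tau(n)$.} Start from the elementary remark that for any admissible $\H_{\le\eps}(\Omega)$ and any cube $Q$ in it, $Q$ itself is a competitor for $P_f^\tau(y,\eps)$ whenever $y\in\tau Q$, so $\1_{\tau Q}(y)\,P_f(Q)\le P_f^\tau(y,\eps)$; integrating this against $|Df|$ and summing over the disjoint cubes (with disjoint $\tau$-cores) yields
\[
\sum_{Q\in\H_{\le\eps}(\Omega)}\ell(Q)^{n-1}\Osc(f,Q)
\le\int_\Omega P_f^\tau(y,\eps)\,d|Df|(y)
\ +\ \sum_{Q\in\H_{\le\eps}(\Omega)}P_f(Q)\,|Df|(Q\setminus\tau Q).
\]
Since $P_f^\tau(\cdot,\eps)\downarrow p_f^\tau\le\tfrac12$, dominated convergence sends the first term to $\int_\Omega p_f^\tau\,d|Df|$. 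Everything hinges on showing that the \emph{frame term} is negligible as $\eps\to0$, uniformly over admissible families, provided $\tau$ is close to $1$. The mechanism, which fixes $\tau(n)$, is a quantitative sharpening of~\eqref{eq:Poincare_on_cube} combined with the near-maximizer description in Lemma~\ref{lemma:maximizers_poincare}: if a definite fraction $\theta$ of $|Df|(Q)$ lies in the outer frame $Q\setminus\tau Q$, then $P_f(Q)$ falls short of the Poincar\'e quotient attainable by cubes already carrying a comparable mass in their $\tau$-core, the deficit being bounded below by a dimensional multiple of $(1-\tau)$. Thus one splits $\H_{\le\eps}(\Omega)$ into ``wasteful'' cubes ($|Df|(Q\setminus\tau Q)\ge\theta|Df|(Q)$) and the rest: for a wasteful cube, translating/contracting it so that a given mass-carrying point of the (pure-face part of the) frame falls in the $\tau$-core of a comparable cube $\widehat Q\subset\Omega$ with $P_f(\widehat Q)\gtrsim P_f(Q)$ charges its contribution, up to a fixed factor, to $\int P_f^\tau(\cdot,\eps)\,d|Df|$; for the rest, $\sum_Q P_f(Q)|Df|(Q\setminus\tau Q)\le\tfrac{\theta}{2}|Df|(\Omega)$. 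The $\L^n$-null but possibly $|Df|$-charged corner portion of the frames is absorbed by a separate covering estimate exploiting that $|Df|$-a.e.\ point centers the variation of $f$ at the point itself (e.g.\ via the one-dimensional structure of $Df$ along the polar of $Df$). Choosing $\theta=\theta(n)$ and then $\tau(n)$ large enough for all the implicit geometric inequalities to close gives $\Gsf(f,\Omega)\le\int_\Omega p_f^\tau\,d|Df|$ for $\tau(n)\le\tau<1$.

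The main obstacle is exactly the frame term: it is the only place where the dimensional threshold enters, it requires a \emph{uniform} (over families, as $\eps\to0$) quantitative Poincar\'e estimate rather than a pointwise blow-up, and the corner positions drag in the fine structure of $Df$. One could instead try the measure-theoretic route --- show via De\,Giorgi--Letta that $A\mapsto\Gsf(f,A)$ is a Radon measure $\le\tfrac12|Df|$, hence $h\,|Df|$, then identify $h(x)=p_f^\tau(x)$ at $|Df|$-a.e.\ $x$ by blowing up --- but the blow-up step conceals the same difficulty, since one must still rule out that rescaled near-optimal cubes slip out of the $\tau$-core of the blow-up cube, which is once more where $\tau\ge\tau(n)$ is needed.
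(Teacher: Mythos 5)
Your lower bound is sound and close in spirit to the paper's (Lemma~\ref{lemma:g_ge_p_tau}), though organized differently: you slice the range of $p_f^\tau$ into level strips $E_j$ and run a Vitali-type covering argument (with the eccentricity bound coming from $x\in\tau Q$), whereas the paper covers all of $\spt(|Df|)$ at once by a fine cover built from cubes $Q$ satisfying both an almost-maximizing property $P_f(Q)>p_f^\tau(x)-\delta$ and a Lebesgue-point estimate for $p_f^\tau$, then invokes Theorem~\ref{thm:tau_vitali}. Both routes close; the disjointness issue between slices that you patch by reducing to separated compacta is handled in the paper by folding the ``continuity'' of $p_f^\tau$ into condition (c) of the cover. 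Your starting inequality in the upper bound,
\[
\sum_{Q}\ell(Q)^{n-1}\Osc(f,Q)\le\int_\Omega P_f^\tau(\cdot,\eps)\,d|Df|+\sum_{Q}P_f(Q)\,|Df|(Q\setminus\tau Q),
\]
obtained by splitting $|Df|(Q)=|Df|(\tau Q)+|Df|(Q\setminus\tau Q)$ and using that each $Q$ is a competitor for $P_f^\tau(y,\eps)$ at every $y\in\tau Q$, is correct and a nice observation; the dominated-convergence step on the first term is also fine.

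The genuine gap is the frame term. Your dichotomy into wasteful/non-wasteful cubes does not close for any simultaneous choice of $\theta$ and $\tau$. For the non-wasteful cubes you get $\sum P_f(Q)\,|Df|(Q\setminus\tau Q)\le\frac{\theta}{2}|Df|(\Omega)$, which forces $\theta$ to be small. But then ``wasteful'' only means $|Df|(Q\setminus\tau Q)\ge\theta\,|Df|(Q)$, and the modified Poincar\'e bound (the paper's Lemma~\ref{lemma:almost_constant}) with $\eta=1-\theta$ gives only $P_f(Q)\le\frac{1-\theta}{2}+3C(n,\tau)(1-\tau)\approx\frac12$, which is the trivial bound; so the wasteful cubes can contribute an amount comparable to $\frac12|Df|(\Omega)$. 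The ``charging'' step you propose --- sliding a wasteful $Q$ to a nearby $\widehat Q$ with $P_f(\widehat Q)\gtrsim P_f(Q)$ and $\tau\widehat Q$ meeting the mass --- is not a bound: the resulting cubes overlap with other members of $\H$, can be hit several times, and the ``fixed factor'' destroys the inequality you need. The separate covering estimate you invoke for the ``$\L^n$-null corner portion'' is also undefined; $Q\setminus\tau Q$ has positive Lebesgue measure, so it is unclear what set you mean.

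What is missing is the paper's Proposition~\ref{prop:good_family}: instead of controlling an arbitrary admissible family, one first passes to a $\delta$-almost maximizing family $\Fcal$ for $\Gsf_\eps(f,\Omega)$ in which \emph{every} cube satisfies $\Osc(f,Q)\ell(Q)^{n-1}\ge\frac18|Df|(Q)$, and hence, via Corollary~\ref{corollary:burrata} (valid once $\tau\ge\tau(n)$), also $|Df|(\tau Q)\ge\frac{1}{16}|Df|(Q)$. The construction is the key trick: discard ``bad'' cubes with low Poincar\'e quotient and replace each by a near-optimal sub-family inside it, then show the loss is $O(\delta)\Gsf_\eps$ using the a priori lower bound $\Gsf_\eps(f,Q)\ge\frac14|Df|(Q)$. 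Once all cubes carry a definite fraction of $|Df|$ in their $\tau$-core, a Lebesgue-point argument for $p_f^\tau$ at points of $\tau Q$ closes the estimate with no frame term at all (Lemma~\ref{lemma:g_le_p_tau}). This is also where $\tau(n)$ enters in the paper: it is the threshold making $3C(n,\tau)(1-\tau)<\frac{1}{16}$. Finally, you dismiss the De~Giorgi--Letta route as ``concealing the same difficulty,'' but the paper does take that route (Proposition~\ref{prop:G_is_measure}), and the good-family lemma is precisely the additional input that dissolves the difficulty your blow-up/charging sketch leaves open.
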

Our result has the following implications for local Poincar\'e constants: 
Firstly, the local $\tau$-Poincar\'e constants \emph{do not depend} on the particular choice of $\tau \in (0,1)$, provided that $\tau$ is sufficiently close to $1$. Secondly, since $\Ksf(f) \le \Gsf(f,\R^n)$, then  
\begin{equation}\label{eq:range_p}
     \frac 14 \le p_f^\tau 
     \le p_f^1 \le \frac 12 \quad \text{$|Df|$-almost everywhere.}
\end{equation} 
In Theorem~\ref{thm:p_tau_equals_g}, the restriction $\tau \in [\tau(n),1)$ stems mostly from the techniques we adopt, e.g., the need of suitable covering theorems with \emph{uncentered} cubes (see Remark \ref{rmk:limitation_of_tau_n} for further details). Unfortunately, at the moment we are unable to tell if this restriction is purely technical. In view of this discussion one may wonder if for all $\tau \ge \tau(n)$,
\begin{equation}\label{eq:conj2}
     p_f^\tau =  p_f^1  \quad \text{$|Df|$-almost everwhere in $\Omega$.}
\end{equation}
In dimension one we are able to establish this identity through the validity of~\eqref{eq:conj}:
\begin{theorem}\label{thm:one_dim}
    Let $f\in {BV}_\loc(I)$, where $I \subset \R$ is open. Then
    \[
    \Gsf(f,I)=\int_I p_f^1(x)\, d|Df|(x)\,.
    \]
    In particular, for all $\tau(1) \le \tau < 1$,
    \[
        p_f^\tau = p_f^1   \quad \text{$|Df|$-almost everywhere.}
    \]
\end{theorem}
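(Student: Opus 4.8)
I would first establish the integral representation $\Gsf(f,I)=\int_I p_f^1\,d|Df|$, and then read off $p_f^\tau=p_f^1$ as an immediate corollary. After exhausting $I$ by bounded open subsets $I_k$ with $|Df|(I_k)<\infty$, one may assume $|Df|(I)<\infty$ (if $|Df|(I)=\infty$ both sides are $+\infty$, because $p_f^1\ge\tfrac14$ and $\Gsf(f,\frarg)\ge\tfrac14|Df|$ locally, and the coincidence of the Poincar\'e constants is then obtained on each $I_k$ and patched together). Under this assumption the inequality $\Gsf(f,I)\le\int_I p_f^1\,d|Df|$ is free: Theorem~\ref{thm:p_tau_equals_g} gives $\Gsf(f,I)=\int_I p_f^\tau\,d|Df|$ for $\tau\in[\tau(1),1)$, and $p_f^\tau\le p_f^1$ pointwise. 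So the entire task is the lower bound $\Gsf(f,I)\ge\int_I p_f^1\,d|Df|$.

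\textbf{Proving the lower bound.} Fix $\eps>0$ and $\eta>0$. Since $\eps\mapsto\Gsf_\eps(f,I)$ is nondecreasing with infimum $\Gsf(f,I)$, it suffices to construct, for this $\eps$, a countable disjoint family of open intervals $J_i\subset I$ with $\ell(J_i)\le\eps$ and $\sum_i\Osc(f,J_i)\ge\int_I p_f^1\,d|Df|-C\eta$, with $C=C(|Df|(I))$. The crucial elementary remark is that $p_f^1(x)=\inf_{\delta>0}P_f^1(x,\delta)$ is a \emph{decreasing} limit, so for every $x\in\spt(|Df|)$ and every $\delta>0$ there is an open interval $J\ni x$ with $\ell(J)<\delta$ and $\Osc(f,J)\ge(p_f^1(x)-\eta)|Df|(J)$: near-optimal intervals are available at arbitrarily fine scales. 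I would then partition $[\tfrac14,\tfrac12]$ as $\tfrac14=t_0<\dots<t_M=\tfrac12$ with mesh $<\eta$, put $E_j:=\{t_{j-1}\le p_f^1<t_j\}\cap\spt(|Df|)$, choose by outer regularity open sets $U_j\supset E_j$ with $|Df|(\overline{U_j}\setminus E_j)$ as small as desired, and process the layers from the top, $j=M,M-1,\dots,1$. At step $j$ one lets $S_j$ be the closed union of the closures of all intervals chosen in the earlier (higher) steps; for $x\in E_j\setminus S_j$ the near-optimal intervals that are short enough to lie inside $U_j$ and to miss $S_j$ form a \emph{fine cover} of $E_j\setminus S_j$, and I extract from it a disjoint subfamily covering $|Df|$-almost all of $E_j\setminus S_j$. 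The collection of all intervals selected over all steps is pairwise disjoint (each avoids the closures of all previously selected ones) with sidelengths $\le\eps$; choosing endpoints away from the (at most countably many) atoms of $|Df|$, and using that the intervals of each layer barely leave its neighbourhood $U_j$, one gets
\[
\sum_i\Osc(f,J_i)\;\ge\;\sum_{j=1}^M(t_{j-1}-\eta)\,|Df|(E_j)\;-\;\eta\;\ge\;\int_I p_f^1\,d|Df|\;-\;C\eta .
\]
Letting $\eta\to0$ yields $\Gsf_\eps(f,I)\ge\int_I p_f^1\,d|Df|$ for every $\eps$, which is the desired lower bound.

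\textbf{The main obstacle.} The hard part --- and the place where dimension one is genuinely used --- is the extraction step: one needs a Vitali-type covering lemma valid on the real line for the \emph{arbitrary} Radon measure $|Df|$, stating that a fine cover of a set by closed intervals, crucially \emph{without} requiring the marked point to be centered (so the point may sit arbitrarily close to an endpoint), has a disjoint subfamily covering $|Df|$-almost all of the set. On $\R$ this holds for every Radon measure --- it is the covering-theoretic face of the weak-$(1,1)$ bound for the uncentered maximal operator on the line --- whereas for cubes in $\R^n$ with $n\ge2$ the uncentered statement fails, which is exactly what forces the dimensional threshold $\tau(n)<1$ in Theorem~\ref{thm:p_tau_equals_g}. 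Apart from this lemma, the only remaining work is the (tedious but routine) bookkeeping of the layered construction: making sure the intervals of the higher layers erode only a negligible amount of the mass of the lower ones, and dealing with atoms and interval endpoints.

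\textbf{Conclusion.} The two bounds give $\Gsf(f,I)=\int_I p_f^1\,d|Df|$. For $\tau\in[\tau(1),1)$, combining this with Theorem~\ref{thm:p_tau_equals_g} yields $\int_I p_f^\tau\,d|Df|=\int_I p_f^1\,d|Df|$, an equality between finite integrals (on each $I_k$) whose integrands satisfy $p_f^\tau\le p_f^1$ $|Df|$-a.e.; hence $p_f^\tau=p_f^1$ $|Df|$-a.e.\ on each $I_k$, and so on all of $I$.
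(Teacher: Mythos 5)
Your proposal correctly reduces the theorem to the lower bound $\Gsf(f,I)\ge\int_I p_f^1\,d|Df|$ and correctly identifies the decisive tool: the uncentered Vitali covering theorem for arbitrary Radon measures on $\R$ (Theorem~\ref{thm:vitali_general}), which is exactly what makes $\tau=1$ tractable in one dimension and is unavailable for cubes in $\R^n$, $n\ge 2$. But your route to the lower bound differs from the paper's and, as written, has a gap. The paper simply re-runs the proof of Lemma~\ref{lemma:g_ge_p_tau} with $\tau=1$ (Remark~\ref{rmk:one_dim_tau_1}): Theorem~\ref{thm:vitali_general} yields both the covering extraction \emph{and} the uncentered $|Df|$-Lebesgue differentiation of $p_f^1$, and the latter lets one replace $p_f^1(\bar x_Q)$ by $\fint_Q p_f^1\,d|Df|$ before summing, so the integral comes out in a single pass with no layering. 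You instead discretize the range $[\tfrac14,\tfrac12]$, cover each level set $E_j$ by an open $U_j$ via outer regularity, and process layers sequentially. This avoids Lebesgue differentiation but introduces the exclusion sets $S_j$, and here is the gap: for $E_j\setminus S_j$ to admit a fine cover by intervals missing all earlier ones, $S_j$ must be closed, so you must take the closure of the union of the earlier closed intervals. That closure can acquire limit points of positive $|Df|$-measure inside $E_j$, and nothing in your sketch controls $|Df|(E_j\cap S_j)$; the claim that the erosion is negligible is asserted, not proved. The cleanest fix also removes the sequential structure: run a single Vitali extraction over all of $\spt(|Df|)$, marking each $x$ with its layer index $j(x)$ and imposing $J\subset U_{j(x)}$; the error $\sum_i|Df|\bigl(J_i\setminus E_{j(x_i)}\bigr)$ is then bounded by $\sum_j |Df|\bigl(\overline{U_j}\setminus E_j\bigr)$, which you control by outer regularity. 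Once this (or the Lebesgue-differentiation argument of the paper) is in place, the rest of your proposal --- the upper bound from Theorem~\ref{thm:p_tau_equals_g} together with $p_f^\tau\le p_f^1$, and the deduction of $p_f^\tau=p_f^1$ a.e.\ from equality of integrals with ordered integrands --- matches the paper.
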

\noindent The validity of \eqref{eq:conj} and \eqref{eq:conj2} for dimensions $n>1$ remains an open question. 

\addtocontents{toc}{\SkipTocEntry}
\subsection*{Tangent oscillations: a representation formula} 

The  original motivation behind our work was to  find a way to retrieve information about the pointwise limit $\Ksf_0(f)$  from the behavior of the mean oscillation of $f$ at infinitesimally small scales. Our idea was to study the mean oscillation of the $BV$ blow-ups (see~\cite[pag. 187]{AFP}) of $f$, at points in the support of $|Df|$.

Delving into this and motivated by the definition of $\Gsf$, we introduce a suitable concept of $BV$ rescaling that takes into account the geometry of cubes: given an open cube $Q$ with $|Df|(Q) > 0$ and an affine bijection $T_Q : Q_0 \to Q$ we consider the normalized function (see Section~\ref{sec:tangents} for the precise definition, which uses \emph{oriented} cubes) 
\[
f_Q(y) \coloneqq  \frac{1}{|Df|(Q)} \, \left( f(T_Q y) - \fint_Q f \right)\, \ell(Q)^{n-1},\qquad y \in Q_0\,.
\]
Now, in order to analyze the infinitesimal behavior of $p_f^\tau$, we also introduce a notion of uncentered $BV$ blow-up tailored for cubes: Given $x \in \spt(|Df|)$, we define the set $\Tan^\tau(f,x)$, of \emph{uncentered $\tau$-tangents of $f$ at $x$}, as the set of all limit points in $L^1(Q_0)$ of the family
$\{f_Q : x \in \tau Q\big\}$ as $\ell(Q) \to 0$.

With these basic concepts at hand, we are now in the position to state our second main result, which gives a description of the Poincar\'e constant in terms of the largest mean oscillations of $\tau$-tangents for $0 \le \tau < 1$:  
\begin{theorem}\label{thm:p_tau_equal_osc}
    Let $f\in BV_\loc(\Omega)$ and let $\tau\in[0,1)$. Then,
    \[
p_f^\tau(x) = \sup_{u\in\Tan^\tau(f,x)} \Osc(u,Q_0)
    \]
    for every $x\in\spt(|Df|)$.
\end{theorem}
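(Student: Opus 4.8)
The plan is to reduce the identity to a one-line change-of-variables computation together with a $BV$-compactness argument; the theorem is essentially soft once one notices that both sides can be expressed through the single quantity $\Osc(f_Q,Q_0)$. First I would show that for every cube $Q\subset\Omega$ with $|Df|(Q)>0$, and for the affine bijection $T_Q\colon Q_0\to Q$ appearing in the definition of $f_Q$ (which has constant Jacobian $\ell(Q)^n$ and linear part of operator norm $\ell(Q)$),
\[
\fint_{Q_0} f_Q=0,\qquad |Df_Q|(Q_0)=1,\qquad \Osc(f_Q,Q_0)=P_f(Q).
\]
The first holds because affine changes of variables preserve averages; the second follows from the scaling relation $|D(f\circ T_Q)|(Q_0)=\ell(Q)^{1-n}|Df|(Q)$ and the definition of $f_Q$; and the third follows from the first (which kills the mean term) together with one more change of variables. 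In particular, setting
\[
\phi_x(\eps):=\sup\bigl\{\Osc(f_Q,Q_0)\colon x\in\tau Q,\ 0<\ell(Q)\le\eps,\ Q\subset\Omega\bigr\},
\]
we get $P_f^\tau(x,\eps)=\phi_x(\eps)$, and the theorem becomes the assertion that $\lim_{\eps\to0}\phi_x(\eps)=\sup_{u\in\Tan^\tau(f,x)}\Osc(u,Q_0)$.

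Next I would record two elementary facts. (i) The map $u\mapsto\Osc(u,Q_0)$ is $2$-Lipschitz for the $L^1(Q_0)$-norm (it is a seminorm bounded by $2\|\cdot\|_{L^1(Q_0)}$ on the unit-measure cube), hence continuous along $L^1$-convergent sequences. (ii) The whole family $\{f_Q\colon Q\subset\Omega,\ |Df|(Q)>0\}$ is precompact in $L^1(Q_0)$: the identities above and the Poincaré inequality~\eqref{eq:Poincare_on_cube} on $Q_0$ give $\|f_Q\|_{BV(Q_0)}\le\tfrac32$ uniformly, so Rellich's compactness theorem for $BV$ applies. Moreover, for $x\in\spt(|Df|)$ the supremum in $\phi_x(\eps)$ is over a nonempty set for all small $\eps$, since the cube centered at $x$ of sidelength $\eps$ lies in $\Omega$ (as $\Omega$ is open), contains $x$ in its $\tau$-contraction for every $\tau\in[0,1]$, and has positive $|Df|$-measure; in particular $\Tan^\tau(f,x)\neq\emptyset$.

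The two inequalities then follow quickly. For ``$\ge$'': given $u\in\Tan^\tau(f,x)$, take cubes $Q_k$ with $x\in\tau Q_k$, $\ell(Q_k)\to0$ and $f_{Q_k}\to u$ in $L^1(Q_0)$; by (i), $\Osc(u,Q_0)=\lim_k\Osc(f_{Q_k},Q_0)$, which is $\le\phi_x(\eps)$ for each fixed $\eps>0$ (eventually $\ell(Q_k)\le\eps$), so letting $\eps\to0$ gives $\Osc(u,Q_0)\le p_f^\tau(x)$; now take the supremum over $u$. For ``$\le$'': choose cubes $Q_k$ with $x\in\tau Q_k$, $\ell(Q_k)\le1/k$ and $\Osc(f_{Q_k},Q_0)\ge\phi_x(1/k)-1/k$; since $\phi_x$ is nondecreasing, $\Osc(f_{Q_k},Q_0)\to p_f^\tau(x)$. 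By (ii) a subsequence of $(f_{Q_k})$ converges in $L^1(Q_0)$ to some $u$, which lies in $\Tan^\tau(f,x)$ because $\ell(Q_k)\to0$ and $x\in\tau Q_k$; by (i), $\Osc(u,Q_0)=p_f^\tau(x)$, hence $\sup_{u\in\Tan^\tau(f,x)}\Osc(u,Q_0)\ge p_f^\tau(x)$.

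I do not expect a serious analytical obstacle here; the delicate part is bookkeeping with the oriented-cube conventions. Specifically, one should check that the three displayed identities, and in particular $\Osc(f_Q,Q_0)=P_f(Q)$, do not depend on which affine bijection $Q_0\to Q$ is used (they do not, since $\Osc(\cdot,Q_0)$ and $|D\cdot|(Q_0)$ are invariant under the orthogonal symmetries of $Q_0$), and that the admissibility constraints $Q\subset\Omega$ and $|Df|(Q)>0$ are consistent with the conventions implicit in $P_f^\tau$ and $\Tan^\tau(f,x)$. Once these are pinned down, the argument is purely a matter of comparing a monotone limit over scales with its subsequential $L^1$-limits, made rigorous by the uniform $BV$ bound in (ii).
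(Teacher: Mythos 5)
Your proof is correct. It is, at its core, the same argument as the paper's, but packaged more economically: your ``$\le$'' direction (extracting, via $BV$-compactness of $\{f_Q\}$, a tangent $u$ with $\Osc(u,Q_0)=p_f^\tau(x)$) coincides with \emph{Step~1} of the proof of Proposition~\ref{prop:equality_c_sup_osc}, and your ``$\ge$'' direction (Lipschitz continuity of $\Osc(\cdot,Q_0)$ plus monotonicity of $P_f^\tau(x,\cdot)$) is the content of \emph{Step~2}. The difference is that the paper does not close the argument directly; it instead routes through the auxiliary quantities $m^\tau(x)$ and $m^\tau_1(x)$ (suprema of \emph{normalized} oscillations over tangents and over probability tangents) and then invokes Lemma~\ref{lemma:m_m1_attained} to identify them with the supremum appearing in the theorem. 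That lemma is where the hypothesis $\tau<1$ is actually consumed, via the inner-shrinking construction used to show that non-zero tangents can be renormalized to probability tangents. Your argument sidesteps that machinery entirely, and, as you observe, the identity $\Osc(f_Q,Q_0)=P_f(Q)$ combined with compactness and Lipschitz continuity is all that is needed. Two benefits of your presentation are worth noting: it makes the attainment of the supremum explicit as a by-product, and it does not obviously use $\tau<1$, suggesting the statement extends to $\tau=1$ by this route (whereas the paper's route through Lemma~\ref{lemma:m_m1_attained} does not). The trade-off is that Lemma~\ref{lemma:m_m1_attained} also delivers $m^\tau=m^\tau_1$ and the attainment within probability tangents, which the paper reuses elsewhere (e.g., Corollary~\ref{cor:BMO}); your proof does not produce those finer statements.
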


For $f \in SBV_\loc$, the largest mean oscillation over $\tau$-tangents is 1/4 at $|D^af|$-a.e. point and 1/2 at $|D^jf|$-a.e. point, regardless of the choice of $\tau$ (see Section \ref{sec:SBV_tangents}). Combining this observation with Theorem~\ref{thm:p_tau_equals_g} and Theorem~\ref{thm:p_tau_equal_osc} we discover that the functional $\Gsf(\frarg,\R^n)$ coincides with $\Ksf_0$ on $SBV_\loc$ functions and can, therefore, be considered as a relaxation from above of $\Ksf_0$: 
\begin{corollary}\label{cor:G_equals_K0_on_SBV}
     Let $f \in SBV_\loc(\R^n)$. Then 
     \[
         \Gsf(f,\R^n) = \Ksf_0(f) = \frac 14 |D^a f|(\R^n) + \frac12 |D^j f|(\R^n)\,.
     \]
 \end{corollary}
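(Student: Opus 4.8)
The plan is to simply assemble the two integral representations with the $SBV$ computation of tangent oscillations. First I would invoke Theorem~\ref{thm:p_tau_equals_g}: fix $\tau = \tau(n)$ (or any $\tau \in [\tau(n),1)$), so that
\[
\Gsf(f,\R^n) = \int_{\R^n} p_f^\tau(x)\, d|Df|(x).
\]
Next I would apply Theorem~\ref{thm:p_tau_equal_osc}, which identifies $p_f^\tau(x) = \sup_{u \in \Tan^\tau(f,x)} \Osc(u,Q_0)$ for every $x \in \spt(|Df|)$. The crux is then the claim, attributed to Section~\ref{sec:SBV_tangents}, that for $f \in SBV_\loc(\R^n)$ one has $\sup_{u \in \Tan^\tau(f,x)} \Osc(u,Q_0) = \tfrac14$ at $|D^af|$-a.e.\ $x$ and $= \tfrac12$ at $|D^jf|$-a.e.\ $x$, irrespective of $\tau \in [0,1)$. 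Granting this, and using $|Df| = |D^af| + |D^jf|$ for $SBV$ functions (vanishing Cantor part), the integral splits as
\[
\int_{\R^n} p_f^\tau\, d|Df| = \tfrac14 \int_{\R^n} \1_{\{|D^af|\}}\, d|D^af| + \tfrac12 \int_{\R^n} \1_{\{|D^jf|\}}\, d|D^jf| = \tfrac14 |D^af|(\R^n) + \tfrac12 |D^jf|(\R^n),
\]
which gives the stated value of $\Gsf(f,\R^n)$.

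It remains to obtain $\Ksf_0(f) = \tfrac14 |D^af|(\R^n) + \tfrac12 |D^jf|(\R^n)$, but this is precisely~\eqref{eq:DFP}, the theorem of De Philippis--Fonseca--Pallara for $SBV_\loc$ functions quoted in the introduction; so no new work is needed there. Alternatively — and this is arguably the cleaner route — one observes that the chain $\Ksf_\eps(f) \le \Gsf_\eps(f,\R^n)$ passes to the limit to give $\Ksf_0(f) \le \Gsf(f,\R^n)$ whenever $\Ksf_0(f)$ exists, while the lower bound in~\eqref{eq:FMS} combined with the jump contribution analysis forces $\Ksf_0(f) \ge \tfrac14|D^af|(\R^n) + \tfrac12|D^jf|(\R^n) = \Gsf(f,\R^n)$; but invoking~\eqref{eq:DFP} directly is simpler and self-contained given the excerpt.

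The main obstacle — and the only genuine content of the corollary — is the $SBV$ tangent computation, i.e.\ verifying that $\tau$-tangents at an absolutely continuous point are (essentially) affine functions whose oscillation on $Q_0$ equals $\tfrac14$, and that $\tau$-tangents at a jump point are two-valued jump functions across a hyperplane, whose maximal oscillation over admissible cube positions is $\tfrac12$ (attained, by Lemma~\ref{lemma:maximizers_poincare}, exactly when the jump hyperplane bisects the cube parallel to a face). The delicate point is the uniformity in $\tau$: one must check that even for $\tau$ close to $0$, i.e.\ when the base point $x$ is allowed to sit near the boundary of the blow-up cube, the supremum of $\Osc(u,Q_0)$ over the resulting larger family of tangents does not exceed $\tfrac14$ (resp.\ $\tfrac12$) — which follows because the blow-up limits at a point of approximate differentiability (resp.\ approximate jump) are determined by the blow-up of $f$ itself and only the \emph{position} of $Q_0$ relative to $x$ varies, keeping $\Osc(u,Q_0)$ bounded by the Poincaré bound~\eqref{eq:Poincare_on_cube}. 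Since that argument is carried out in Section~\ref{sec:SBV_tangents}, here one only needs to cite it and perform the elementary measure-theoretic bookkeeping above.
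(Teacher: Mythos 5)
Your proposal is correct and follows the same route as the paper: invoke Theorem~\ref{thm:p_tau_equals_g} for the integral representation with $\tau$ close to $1$, apply Theorem~\ref{thm:p_tau_equal_osc} to identify $p_f^\tau$ with the largest tangent oscillation, read off the values $\tfrac14$ and $\tfrac12$ from the $SBV$ tangent classification of Section~\ref{sec:SBV_tangents} (Lemma~\ref{lemma:tangents_of_SBV}), and cite~\eqref{eq:DFP} for $\Ksf_0$. One small note: you do not need to worry about $\tau$ close to $0$ in this corollary since Theorem~\ref{thm:p_tau_equals_g} already forces $\tau\in[\tau(n),1)$; and for the jump case the paper sidesteps Lemma~\ref{lemma:maximizers_poincare} by exhibiting the single tangent $j_{-\frac12,\frac12,e_1,0}$ achieving $\Osc=\tfrac12$ and quoting the universal upper bound $p_f^\tau\le\tfrac12$.
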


Lastly, we record  an interesting consequence of Theorem~\ref{thm:p_tau_equal_osc}. In dimension $n = 1$, the local Poincar\'e constant $p_f^1$ can be expressed as the largest BMO-seminorm
    \[
    \|u\|_{\mathrm{BMO}(Q_0)} \coloneqq \sup \left\{ \Osc(u,Q) : \text{$Q \subset Q_0$ open cube}\right\}\,,
    \]
 amongst all $1$-tangents:
\begin{corollary}\label{cor:BMO}
    Let $I \subset \R$ be an open interval and let $f \in BV_\loc(I)$. Then,
    \[
        p_f^1(x) = \sup_{u\in\Tan^1(f,x)}\|u\|_{\mathrm{BMO}((-\frac 12,\frac 12))}\,.
    \]
for $|Df|$-almost every $x \in I$.
\end{corollary}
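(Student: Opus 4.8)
The plan is to prove the two inequalities of the statement separately, taking Theorems~\ref{thm:one_dim} and~\ref{thm:p_tau_equal_osc} as the principal inputs; throughout I write $Q_0=(-\tfrac12,\tfrac12)$, as in dimension one. The inequality $p_f^1(x)\le\sup_{u\in\Tan^1(f,x)}\|u\|_{\mathrm{BMO}(Q_0)}$ is the easy one: fixing $\tau\in[\tau(1),1)$, Theorem~\ref{thm:one_dim} gives $p_f^\tau=p_f^1$ $|Df|$-a.e., Theorem~\ref{thm:p_tau_equal_osc} gives $p_f^\tau(x)=\sup_{u\in\Tan^\tau(f,x)}\Osc(u,Q_0)$, and since $x\in\tau Q$ forces $x\in Q$ we have $\Tan^\tau(f,x)\subseteq\Tan^1(f,x)$, while $\Osc(u,Q_0)\le\|u\|_{\mathrm{BMO}(Q_0)}$ because $Q_0$ is admissible in the BMO supremum; combining these yields the bound at $|Df|$-a.e.\ $x$.

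For the reverse inequality $\sup_{u\in\Tan^1(f,x)}\|u\|_{\mathrm{BMO}(Q_0)}\le p_f^1(x)$ the first step is to push the cell-formula of Theorem~\ref{thm:p_tau_equal_osc} to the endpoint $\tau=1$, i.e.\ to establish $\sup_{u\in\Tan^1(f,x)}\Osc(u,Q_0)=p_f^1(x)$ for \emph{every} $x\in\spt(|Df|)$. For ``$\le$'', any $u\in\Tan^1(f,x)$ is the $L^1(Q_0)$-limit of rescalings $f_{Q_k}$ with $x\in Q_k$ and $\ell(Q_k)\to0$, and in dimension one $\Osc(f_{Q_k},Q_0)=P_f(Q_k)$, so $\Osc(u,Q_0)=\lim_k P_f(Q_k)\le\lim_k P^1_f(x,\ell(Q_k))=p_f^1(x)$; for ``$\ge$'' one extracts a tangent from a sequence of cubes nearly realizing $p_f^1(x)$, the normalized rescalings $f_{Q_k}$ having unit total variation and zero average on $Q_0$, hence being precompact in $L^1(Q_0)$ by the compact embedding $BV(Q_0)\hookrightarrow L^1(Q_0)$. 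Granting this, it remains only to prove
\[
\Osc(u,R)\le p_f^1(x)\qquad\text{for every }u\in\Tan^1(f,x)\text{ and every open subinterval }R\subseteq Q_0.
\]

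The idea here is that a suitable rescaling of $u$ on $R$ is again a $1$-tangent of $f$ at $x$. Write $u=\lim_k f_{Q_k}$ in $L^1(Q_0)$ with $x\in Q_k$, $\ell(Q_k)\to0$, set $y_k:=T_{Q_k}^{-1}(x)$, and pass to a subsequence so that $y_k\to y_\infty\in\overline{Q_0}$ and $|Df_{Q_k}|\weakstarto\mu$, where $\mu$ is a finite measure on $Q_0$ with $\mu(Q_0)\le1$ and $\mu\ge|Du|$. Assume first that $R$ is a $\mu$-continuity interval, that $|Du|(R)>0$, and that $y_\infty\in R$. A direct change of variables shows that $(f_{Q_k})_R=f_{T_{Q_k}(R)}$, that $x\in T_{Q_k}(R)$ for large $k$, and — since $|Df_{Q_k}|(R)\to\mu(R)$ — that $(f_{Q_k})_R\to c\,u_R$ in $L^1(Q_0)$, where $u_R$ is the analogous rescaling of $u$ on $R$ and $c:=|Du|(R)/\mu(R)\in(0,1]$. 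Hence $c\,u_R\in\Tan^1(f,x)$, so by the previous step $c\,\Osc(u_R,Q_0)\le p_f^1(x)$, and since $\Osc(u_R,Q_0)=\Osc(u,R)/|Du|(R)$ this rearranges to $\Osc(u,R)=|Du|(R)\,\Osc(u_R,Q_0)\le\mu(R)\,p_f^1(x)\le p_f^1(x)$. A density argument — using that $\Osc(u,\cdot)$ is continuous with respect to symmetric differences and that $\mu$ has at most countably many atoms — removes the continuity-interval and non-triviality restrictions, so the only case left open is that of a subinterval $R$ with $y_\infty\notin\overline{R}$.

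This last case is where I expect the real difficulty: when $y_\infty\notin\overline{R}$ the interval $T_{Q_k}(R)$ is bounded away from $x$ relative to its own size, hence not admissible for $p_f^1(x)$, and enlarging it to contain $x$ costs a multiplicative constant in the oscillation estimate. I would attack it either by exploiting the full freedom in the position of blow-up cubes — constructing a different tangent of $f$ at $x$ that exhibits the behaviour of $u$ on $R$ — or by turning to the integrated form of the statement: combining the pointwise bound $\sup_{u\in\Tan^1(f,x)}\|u\|_{\mathrm{BMO}(Q_0)}\ge p_f^1(x)$ obtained above with a covering estimate $\int_I\sup_{u\in\Tan^1(f,x)}\|u\|_{\mathrm{BMO}(Q_0)}\,d|Df|(x)\le\Gsf(f,I)$, and then invoking the integral representation $\Gsf(f,I)=\int_I p_f^1\,d|Df|$ from Theorem~\ref{thm:one_dim}, together with $\sigma$-finiteness of $|Df|$, to deduce the desired $|Df|$-a.e.\ identity. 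In either route, the one genuinely delicate point is the mismatch between the location of the optimal subinterval of a tangent and the blow-up point; the remaining ingredients — changes of variables, weak-$*$ compactness, and Besicovitch-type covering — are routine.
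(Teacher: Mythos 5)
Your easy direction ($p_f^1(x)\le\sup_{u\in\Tan^1(f,x)}\|u\|_{\mathrm{BMO}}$) is correct, and your reduction of the hard direction to the claim
\[
\Osc(u,R)\le p_f^1(x)\qquad\text{for every }u\in\Tan^1(f,x)\text{ and every open subinterval }R\subseteq Q_0
\]
is also the right move. Your treatment of the case $y_\infty\in R$ via the composition rule $(f_{Q_k})_R=f_{T_{Q_k}(R)}$, a continuity interval, and the weak-$\ast$ limit $\mu\ge|Du|$ is essentially sound. But the case $y_\infty\notin\overline R$, which you flag and then leave open, is a genuine gap, not a loose end: it is precisely the content of Proposition~\ref{prop:tangents_tangents} (``tangents to tangents are tangents''), which the paper invokes through Corollary~\ref{cor:c_tang_below_c_f} and which is the whole point of the proof. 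That proposition is \emph{not} a pointwise statement provable by manipulating a single blow-up sequence; it holds only $|Df|$-a.e., and its proof (mirroring Preiss's theorem for tangent measures) needs a separability argument plus a density-point argument. Roughly: one fixes $k,m$, considers the bad set $A_{k,m}$ of points $a$ admitting a tangent $u_a$ and a subinterval $Q_a$ with $(u_a)_{Q_a}$ uniformly far (by $1/k$) from every rescaling $f_Q$ with $a\in Q$ and $\ell(Q)<1/m$, extracts by separability of $L^1(Q_0)$ a large subset $A$ on which the putative tangents-of-tangents are mutually $1/(2k)$-close, and then uses the $|Df|$-density of $A$ (Theorem~\ref{thm:tau_Lebesgue}) to locate, inside the rescaled subinterval $T_j(\tau Q_a)$, a nearby point $b\in A$ for which $T_j(Q_a)$ \emph{is} admissible, deriving a contradiction. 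Nothing in your sketch reproduces this mechanism; you are instead trying to reposition the blow-up interval around $x$ itself, which, as you correctly compute, changes the limiting rescaled function.

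Your alternative route (b) runs into the same difficulty in disguise: to prove a covering estimate of the form $\int_I\sup_{u\in\Tan^1(f,x)}\|u\|_{\mathrm{BMO}(Q_0)}\,d|Df|\le\Gsf(f,I)$ you would need to relate, for each $x$, the optimal subinterval $S\subset Q_0$ of the optimal tangent to an interval in the original domain that can be used in a Vitali family; by the composition rule that interval is $T_Q(S)$ for some small $Q\ni x$, and again $T_Q(S)$ need not contain $x$. So the off-center tangent-to-tangent problem resurfaces.

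By contrast, the paper's proof of the corollary is a two-line deduction from results it has already established: Corollary~\ref{cor:c_tang_below_c_f} (which in $n=1$ extends to $\tau=1$ by the remark following Proposition~\ref{prop:tangents_tangents}, using the interval Vitali theorem~\ref{thm:vitali_general}) gives $\Osc(u,I)\le\frac{\Osc(u,I)}{|Du|(I)}\le p_f^\tau(x)$ for all subintervals $I$, hence $\|u\|_{\mathrm{BMO}(Q_0)}\le p_f^\tau(x)$ at a.e.\ $x$; and Lemma~\ref{lemma:m_m1_attained} together with Proposition~\ref{prop:equality_c_sup_osc} produces a tangent attaining $p_f^\tau(x)=\Osc(u,Q_0)\le\|u\|_{\mathrm{BMO}(Q_0)}$. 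If you want a self-contained argument in the style you began, you must first prove the ``tangents to tangents'' statement; there is no shortcut around it.
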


\addtocontents{toc}{\SkipTocEntry}
\subsection*{Comments on other shapes} The fact that we work with cubes does not play a determining role in the essence of our statements. For instance, one could work with shapes ---such as a fixed convex shape--- that permit the use of covering theorems (see also \cite{AmbrosioComi,farroni2020formula} for related results). It is worth remarking that, in stark contrast with cubes, the (local) Poincar\'e constants associated with other shapes might depend on the dimension $n$.

\addtocontents{toc}{\SkipTocEntry}
\subsection*{Structure of the paper} Let us briefly summarize the contents of this work. In Section \ref{sec:integral_representation} we prove that $\Gsf(f,\cdot)$ extends to a Borel measure. In Section \ref{sec:lower_bound} we prove the inequality “$\ge$” in Theorem \ref{thm:p_tau_equals_g}. In Section \ref{sec:good_family} we do some preparatory work for the other inequality, proving the existence of good families of cubes. In Section \ref{sec:upper_bound} we prove the inequality “$\le$” in Theorem \ref{thm:p_tau_equals_g}. In Section \ref{sec:tangents} we prove Theorem \ref{thm:p_tau_equal_osc}. In Section \ref{sec:rigidity} we study the properties of sets where $p_f^\tau$ coincides with one of the extremal values $\tfrac14$ and $\tfrac12$. In Appendix \ref{app:measure_theory} we collect some measure-theoretic results and covering theorems. Finally, in Appendix \ref{app:Cantor} we give an example showing that $\Ksf_0$ might fail to exist on $BV$ functions with a non-trivial Cantor part.

\addtocontents{toc}{\SkipTocEntry}
\subsection*{Acknowledgements} This research was supported through the program “Oberwolfach Research Fellows”, by the Mathematisches Forschungsinstitut Oberwolfach, during a research visit by the authors in November 2022. The authors are sincerely grateful for the institute's work-motivating atmosphere and warm hospitality. AAR was supported by the Fonds de la Recherche Scientifique - FNRS under Grant No 40005112. PB and GDN received funding from the European Research Council (ERC) under the European Union's Horizon 2020 research and innovation programme, grant agreement No 757254 (SINGULARITY).

\addtocontents{toc}{\SkipTocEntry}
\subsection*{Notation}
Throughout the paper we adopt the following conventions.
We let $n \ge 1$ be an integer and we always consider $\Omega$ an open subset of $\R^n$. 
We will generically denote (open) cubes in $\R^n$ by the letter $Q$, while $Q_0$ is the centered unit cube $(-\frac 12,\frac 12)^n$. The sidelength of a cube $Q$ will be denoted by $\ell(Q)$ and its center point by $x_Q$. The letter $\tau$ will denote a positive constant between $0$ and $1$, often times attaining the value $1$. With a possible abuse of notation, we shall write $
\tau Q$ to denote the cube of sidelength $\tau\ell(Q)$, centered at $x_Q$, and whose faces are parallel to the ones of $Q$. As usual, we write $B(x,r)$ to denote an open ball of radius $r$ centered at a point $x$.
If $A$ is a set, we denote by $A^c$ its complement and by $\1_A$ its characteristic function. The $n$-dimensional Lebesgue measure
of a set $A \subset \R^n$ is denoted by $\Lcal^n(A)$ or, when no risk of confusion arises, simply by $|A|$. 

\vspace{0.5cm}

\section{Integral Representation}\label{sec:integral_representation}

We start by defining a localized version of the main functionals. Let $\Ocal(\Omega)$ be the family of all open sets contained in $\Omega$, and let $\Gsf_\eps(f,\,\frarg\,) : \Ocal(\Omega) \to [0,\infty]$ be defined as 
\[
\Gsf_\eps(f,U)\coloneqq \sup_{\H_{\le\eps}(U)} \sum_{Q\in \H_{\leq\eps}(U)}\ell(Q)^{n-1} \fint_{Q}\Big|f(x)-\fint_{Q} f\Big|\, \dd x\,, \quad U \in \Ocal(\Omega),
\]
where the supremum is taken among all families of essentially disjoint cubes of sidelength at most $\eps$ contained in $U$. We also define
\[
\Gsf(f,U)\coloneqq \lim_{\eps\to 0} \Gsf_\eps (f,U),\qquad U\in\Ocal(\Omega),
\]
where the limit always exists by monotonicity. Next, we show that $\Gsf(f,\frarg)$ is the restriction of a locally bounded Borel measure

\begin{proposition}\label{prop:G_is_measure}
If $f\in BV_\loc(\Omega)$, the set function $\Gsf(f,\,\frarg\,):\Ocal(\Omega)\to[0,\infty]$ can be extended to a Radon measure that is absolutely continuous with respect to $|Df|$. In particular, there exists a nonnegative function $g_f : \Omega \to [0,\infty)$ such that
\[
   \Gsf(f,B) = \int_B g_f(x) \, \dd |Df|(x) 
\]
for every Borel set $B \subset \Omega$. 
\end{proposition}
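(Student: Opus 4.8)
The strategy is the standard De Giorgi--Letta scheme for turning a monotone, superadditive, inner-regular set function into a Borel measure, followed by a comparison with $|Df|$ to obtain absolute continuity and hence the density $g_f$. I would proceed in the following steps.

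\emph{Step 1: Elementary properties of $\Gsf(f,\frarg)$ on $\Ocal(\Omega)$.} First I would check directly from the definition \eqref{eq:G_eps_def_intro} that for each fixed $\eps>0$ the set function $U\mapsto\Gsf_\eps(f,U)$ is (i) monotone under inclusion of open sets, (ii) superadditive on disjoint open sets (a competitor family on $U_1\sqcup U_2$ splits into competitor families on each piece), and (iii) subadditive on pairs $U_1,U_2$ whose union is all that matters — here one must be slightly careful because a cube in a competitor for $U_1\cup U_2$ need not lie in $U_1$ or in $U_2$. This last point is handled exactly as in the $SBV$ literature: given $\delta>0$, shrink to the smaller scale constraint and discard the cubes meeting the ``overlap zone'' $\{x : \dist(x,U_1)<\rho\}\cap\{x:\dist(x,U_2)<\rho\}$ of small $|Df|$-measure; the Poincar\'e inequality \eqref{eq:Poincare_on_cube} controls the oscillation lost on the discarded cubes by $\tfrac12|Df|$ of a neighbourhood of $\partial U_1\cap U_2$, which can be made arbitrarily small by choosing $\rho$ small (using that $|Df|$ is a Radon measure). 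Passing to the limit $\eps\to0$ preserves monotonicity and superadditivity, and gives $\Gsf(f,U_1\cup U_2)\le\Gsf(f,U_1)+\Gsf(f,U_2)$ when $U_1\cap U_2=\emptyset$, i.e.\ finite additivity on disjoint open sets, together with the subadditivity estimate on general open sets.

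\emph{Step 2: Inner regularity and the De Giorgi--Letta criterion.} I would verify that $\Gsf(f,\frarg)$ is inner regular, i.e.\ $\Gsf(f,U)=\sup\{\Gsf(f,V):V\Subset U\text{ open}\}$: any admissible family of cubes for $U$ at scale $\eps$ can be truncated to those cubes contained in some $V\Subset U$, losing an amount controlled by $\tfrac12|Df|(U\setminus V)\to0$. With monotonicity, superadditivity on disjoint open sets, (inner regular) subadditivity, and inner regularity in hand, the De Giorgi--Letta theorem (which I would cite from the measure-theory appendix, Appendix~\ref{app:measure_theory}) asserts that $\Gsf(f,\frarg)$ is the restriction to $\Ocal(\Omega)$ of a Borel measure on $\Omega$; local finiteness, hence the Radon property, follows from the localized upper bound $\Gsf(f,U)\le\tfrac12|Df|(U)$, which is the localized form of \eqref{eq:FMS}/\eqref{eq:bound_above} combined with the Poincar\'e inequality \eqref{eq:Poincare_on_cube} applied cube by cube (each admissible family satisfies $\sum_Q\ell(Q)^{n-1}\Osc(f,Q)\le\tfrac12\sum_Q|Df|(Q)\le\tfrac12|Df|(U)$ by essential disjointness).

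\emph{Step 3: Absolute continuity and the density.} The cube-by-cube Poincar\'e bound from Step 2 shows $\Gsf(f,U)\le\tfrac12|Df|(U)$ for all open $U$, and by outer regularity of both measures this extends to $\Gsf(f,B)\le\tfrac12|Df|(B)$ for every Borel set $B$; in particular $\Gsf(f,\frarg)\ll|Df|$. The Radon--Nikodym theorem then yields a density $g_f\in L^1_\loc(|Df|)$, $0\le g_f\le\tfrac12$, with $\Gsf(f,B)=\int_B g_f\,d|Df|$; extending $g_f$ by $0$ outside $\spt(|Df|)$ gives a function $g_f:\Omega\to[0,\infty)$ as claimed. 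The main obstacle is really Step 1(iii)--Step 2: making the subadditivity-on-overlapping-open-sets and inner-regularity arguments clean requires carefully quantifying, via the Poincar\'e inequality, the oscillation contributed by cubes straddling the boundary region and knowing this is negligible because $|Df|$ charges no fixed boundary and is inner/outer regular. Everything else is bookkeeping and an appeal to De Giorgi--Letta.
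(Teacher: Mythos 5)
Your plan matches the paper's proof almost exactly: verify the De Giorgi--Letta hypotheses for $\Gsf(f,\frarg)$ (monotonicity, subadditivity on general open pairs via a boundary-layer argument controlled by Poincar\'e, superadditivity on disjoint pairs, inner regularity), then use the pointwise bound $\Gsf(f,U)\le\tfrac12|Df|(U)$ and Radon--Nikodym to produce the density $g_f$. The only cosmetic difference is inner regularity: the paper deduces it from subadditivity by writing $U=U_\eta\cup(U\setminus\overline{U_{2\eta}})$ and invoking $\Fsf(U\setminus\overline{U_{2\eta}})\le\tfrac12|Df|(U\setminus\overline{U_{2\eta}})\to0$, whereas you propose a direct cube-truncation to $V\Subset U$; both work, though the paper's route avoids worrying separately about cubes straddling $\partial V$.

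One imprecision is worth correcting in your subadditivity step: the cubes you must discard are those in $U_1\cup U_2$ that fail to lie inside the shrunk set $(U_1)_\eta$ \emph{and} fail to lie inside $(U_2)_\eta$; for cubes of sidelength $\le\eps$ this forces them into $\big(U_1\setminus (U_1)_{\eta+\eps\sqrt n}\big)\cup\big(U_2\setminus (U_2)_{\eta+\eps\sqrt n}\big)$, a set that shrinks to $\emptyset$ as $\eta,\eps\to0$, so its $|Df|$-measure vanishes. Describing the discard region as a neighbourhood of $\partial U_1\cap U_2$ (or as an intersection of distance-neighbourhoods) is misleading: shrinking neighbourhoods of $\partial U_1\cap U_2$ converge to $\partial U_1\cap U_2$ itself, whose $|Df|$-measure need not be zero. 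The paper's precise three-way split $\Qcal_{U_\eta},\Qcal_{V_\eta},\Qcal_R$ is exactly what makes this bookkeeping clean.
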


\begin{proof} Let us fix $f\in BV_\loc(\Omega)$ and set $\Fsf(\cdot) := \Gsf(f,\cdot)$.
The Poincar\'e inequality yields the (pointwise) inequality 
\begin{equation}\label{eq:bound_with_poincare}
\Fsf(U) \le \frac12|Df|(U), \quad \forall U \in \Ocal(\Omega),
\end{equation}
which implies that $\Fsf$ vanishes on $|Df|$-null (open) sets. 
Notice that the assertions that $\Fsf$ is absolutely continuous with respect to $|Du|$ and the existence of the density $g_f$ will follow from \eqref{eq:bound_with_poincare} once we know that $\Fsf$ is (the restriction of) a measure (to open sets). In order to verify this, we show that Theorem \ref{thm:De_Giorgi_Letta} can be applied to the set function $\Fsf$.

Clearly, $\Fsf$ is increasing and the condition $\Fsf(\emptyset)=0$ is satisfied.  
Let us prove that the three remaining assumptions of Theorem \ref{thm:De_Giorgi_Letta} are satisfied:

\begin{enumerate}
    \item[(i)] \emph{Subadditivity.}
     Fix $\eta,\eps >0$ and let
     $\Qcal := \Hcal_{\le \e}( U\cup V)$ be a family of disjoint cubes contained in $U\cup V$ with sidelength $\ell(Q) \le \eps$. Let $U_\eta:= \{x \in U: \dist(x,\partial U)>\eta\}$ and let $V_\eta$ be defined analogously.  
     We split the family $\Qcal$ into three subfamilies:
     \begin{align*}
         \Qcal_{U_\eta}&:= \{Q \in \Qcal: Q \subset U_\eta\},\\
         \Qcal_{V_\eta}&:= \{Q \in \Qcal\setminus \Qcal_{U_\eta}: Q \subset V_\eta\},\\
         \Qcal_{R} &:= \Qcal\setminus (\Qcal_{U_\eta} \cup \Qcal_{V_\eta}).
     \end{align*}
     Observe that $Q\in \Qcal_{R}$ implies 
     \[
    Q \subset (U\setminus U_{\eta+\eps\sqrt{n}}) \cup (V\setminus V_{\eta+\eps\sqrt{n}}) .
     \]
     Therefore we have 
    \begin{align*}
    \sum_{Q\in \Qcal} \ell(Q)^{n-1} \Osc(f,Q) & \le  \Gsf_\eps(f,U_\eta)+\Gsf_\eps(f,V_\eta)+ \sum_{Q\in \Qcal_R} \ell(Q)^{n-1} \Osc(f,Q) \\ 
     \le \Gsf_\eps(f,U) &+\Gsf_\eps(f,V)+\frac12 |Df|((U\setminus U_{\eta+\eps\sqrt{n}}) \cup (V\setminus V_{\eta+\eps\sqrt{n}})).
    \end{align*}
    Taking the supremum in the LHS and sending $\eta \to 0$ and $\eps \to 0$ we get 
    \[
    \Fsf(U\cup V) \le \Fsf(U)+\Fsf(V), 
    \]
    as desired.
    \item[(ii)] \emph{Superadditivity.} We now show superadditivity holds for $\Gsf_\eps$ on disjoint sets. Let $\eps>0$ and $U,V \in \Ocal(\Omega)$ be open, disjoint sets. Pick two arbitrary families of cubes $\Hcal_{\le \e(U)}$ and $\Hcal_{\le \e(V)}$ as in the definition of $\Gsf_\eps$. 
    Then the union of these families is an admissible competitor for $\Gsf_\e(f,U\cup V)$. This implies
    \[
    \Gsf_\e(f,U\cup V) \ge \sum_{Q\in \H_{\leq\eps(U)}} \ell(Q)^{n-1} \Osc(f,Q) + \sum_{Q'\in \H_{\leq\eps(V)}} \ell(Q)^{n-1} \Osc(f,Q).
    \]
    Passing to the supremum in the RHS first over all families $\Hcal_{\leq\eps(U)}$ and then over $ \Hcal_{\leq\eps(V)}$ we deduce the desired supperadditivity
    \[
    \Gsf_\e(f,U\cup V) \ge 
    \Gsf_\e(f,U) + \Gsf_\e(f,V)\,.
    \]
    Letting $\eps \to 0$, we recover the superadditivity for $\Fsf$.  
    \item[(iii)] \emph{Inner regularity.} This property follows from subadditivity in the following way. Fix $U$ open set, and $\eta>0$. We suppose that $U$ is bounded (otherwise a similar argument works with minor changes) and write $U=U_\eta\cup (U\setminus \overline{U_{2\eta}})$. Then, $U_\eta\subset\subset U$ and
    \[
    \Fsf(U)\le \Fsf(U_\eta)+\Fsf(U\setminus \overline{U_{2\eta}})\le \Fsf(U_\eta)+\frac12 |Df|(U\setminus \overline{U_{2\eta}}).
    \]
    Since $|Df|(U\setminus \overline{U_{2\eta}})\to 0$ as $\eta\to 0$, we obtain the desired inner regularity property.\qedhere
\end{enumerate} 
\end{proof}

\begin{remark}\label{rmk:bounds_g_f}
    By \eqref{eq:FMS}, \eqref{eq:bound_above}, and Poincar\'e inequality the density $g_f$ of the functional $\Gsf$ satisfies
    \begin{equation}\label{eq:bounds_g_f}
        \frac14\le g_f(x)\le \frac12\qquad\text{at $|Df|$-almost every $x$.}
    \end{equation}
\end{remark}

\vspace{0.5cm}

\section{Lower bound}\label{sec:lower_bound}

In all that follows we shall work with $f \in BV_\loc(\Omega)$, where $\Omega \subset \R^n$ is an open set.

\begin{lemma}\label{lemma:g_ge_p_tau} Let $\tau\in [0,1)$. For $|Df|$-almost every $x \in \Omega$ it holds
\[
    g_f(x) \ge p^\tau_f(x).
\]
If $n=1$, then the same conclusion holds also for $\tau=1$ (see Remark \ref{rmk:one_dim_tau_1}).
\end{lemma}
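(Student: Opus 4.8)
\emph{Proof plan.}
The plan is to reduce the statement to a single super-level estimate for the density $g_f$, and to obtain that estimate by packing a near-optimal family of small cubes inside a fixed open set. Write $\mu:=\Gsf(f,\cdot)$ for the Radon measure provided by Proposition~\ref{prop:G_is_measure}, so that $\mu\ll|Df|$ with density $g_f$. It suffices to show that for every $t>0$ one has $g_f\ge t$ at $|Df|$-a.e.\ point of the Borel set $A_t:=\{x\in\Omega:\ p_f^\tau(x)>t\}$: since
$\{g_f<p_f^\tau\}=\bigcup_{t\in\mathbb{Q},\,t>0}\bigl(\{g_f<t\}\cap A_t\bigr)$,
letting $t$ range over the positive rationals then yields $g_f\ge p_f^\tau$ $|Df|$-almost everywhere.

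Second, I would fix $t>0$ and an arbitrary open set $U\subset\Omega$. Since $\eps\mapsto P^\tau_f(x,\eps)$ is non-decreasing, one has $P^\tau_f(x,\eps)\ge p_f^\tau(x)$ for every $\eps>0$; hence for each $x\in A_t\cap U$ and each $\eps>0$ there is a cube $Q=Q_{x,\eps}$ with $x\in\tau Q$, $\ell(Q)\le\eps$, and $P_f(Q)>t$, i.e.\ $\ell(Q)^{n-1}\Osc(f,Q)>t\,|Df|(Q)$. Because $\tau<1$ such a cube satisfies $Q\subset B(x,\sqrt n\,\ell(Q))$, so by taking $\ell(Q)$ small we may in addition require $Q\subset U$. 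The collection $\{Q_{x,\eps}:\ x\in A_t\cap U,\ \eps>0\}$ is then a fine cover of $A_t\cap U$ by cubes whose eccentricity relative to the point they are attached to is bounded by $2\sqrt n/(1-\tau)$; by the Besicovitch--Vitali covering theorem for the Radon measure $|Df|$ (Appendix~\ref{app:measure_theory}) one can extract, for any prescribed $\eps>0$, a countable pairwise disjoint subfamily $\{Q_i\}_i$ with $\ell(Q_i)\le\eps$, $Q_i\subset U$, and $|Df|\bigl((A_t\cap U)\setminus\bigcup_i Q_i\bigr)=0$.

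Third, this family is an admissible competitor in the definition of $\Gsf_\eps(f,U)$, whence
\[
\Gsf_\eps(f,U)\ \ge\ \sum_i \ell(Q_i)^{n-1}\Osc(f,Q_i)\ \ge\ t\sum_i|Df|(Q_i)\ =\ t\,|Df|\Bigl(\bigcup_i Q_i\Bigr)\ \ge\ t\,|Df|(A_t\cap U),
\]
where the equality uses pairwise disjointness and the last inequality uses that $\bigcup_i Q_i$ carries $|Df|$-almost all of $A_t\cap U$. Letting $\eps\to0$ gives $\mu(U)=\Gsf(f,U)\ge t\,|Df|(A_t\cap U)$ for every open $U\subset\Omega$; by outer regularity of $\mu$ this upgrades to $\mu(B)\ge t\,|Df|(A_t\cap B)$ for every Borel set $B$, and taking $B=A_t\cap\{g_f<t\}$ together with $\mu(B)=\int_B g_f\,d|Df|$ forces $|Df|(B)=0$. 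This is exactly $g_f\ge t$ $|Df|$-a.e.\ on $A_t$, completing the argument.

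Finally, a remark on where $\tau<1$ enters and on the case $n=1$, $\tau=1$. The only non-elementary ingredient is the covering theorem, which relies on the cubes $Q_{x,\eps}$ having bounded eccentricity with respect to the point $x$; this is precisely what $x\in\tau Q$ with $\tau<1$ guarantees, and what fails for $\tau=1$, where a cube $Q\ni x$ may be wildly off-centered relative to $x$. In dimension one, however, a covering lemma for arbitrary intervals containing a prescribed point still produces the disjoint subfamily needed above, so the same proof goes through with $\tau=1$; this is the content of Remark~\ref{rmk:one_dim_tau_1}. I expect the covering step to be the only genuine obstacle: the rest is bookkeeping, and the conceptual point worth isolating is that one should \emph{not} try to bound $\Gsf(f,Q)$ from below cube by cube (this fails, since $\Gsf$ is the \emph{decreasing} limit of the $\Gsf_\eps$), but rather estimate $\Gsf_\eps$ on a \emph{fixed} open set via a whole packing of small cubes and only afterwards let $\eps\to0$.
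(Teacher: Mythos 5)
Your strategy is essentially the one the paper uses: a fine cover of the relevant set by near-maximizing cubes (valid because $P_f^\tau(x,\cdot)$ decreases to $p_f^\tau(x)$, so cubes with $P_f(Q)$ above any threshold below $p_f^\tau(x)$ exist at all small scales), a Vitali-type extraction for the measure $|Df|$ (Theorem~\ref{thm:tau_vitali}), and comparison with $\Gsf_\eps$. The one methodological difference is how you turn the packing bound into a density inequality. The paper additionally imposes a Lebesgue-point condition for $p_f^\tau$ at the centre of each cube (its condition $(c)$), so the packing directly yields $\Gsf(f,\Omega)\ge\int_\Omega p_f^\tau\,d|Df|$, after which Radon--Nikodym finishes. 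You instead stratify by super-level sets $A_t=\{p_f^\tau>t\}$ and prove $\Gsf(f,U)\ge t\,|Df|(A_t\cap U)$ for every open $U$, whence $g_f\ge t$ a.e.\ on $A_t$; letting $t$ run through the positive rationals recovers the full bound. Your route is marginally leaner in that it avoids Theorem~\ref{thm:Lebesgue_diff_tau_cubes_functions} for $p_f^\tau$ and only uses its Borel measurability. Both routes are standard and of comparable length, so this is a cosmetic variation rather than a genuinely different proof.

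There is one small gap you should patch. You assert that the Vitali extraction yields disjoint \emph{open} cubes $\{Q_i\}$ with $|Df|\bigl((A_t\cap U)\setminus\bigcup_i Q_i\bigr)=0$, but Theorem~\ref{thm:tau_vitali} only gives coverage by the \emph{closures}: $|Df|\bigl((A_t\cap U)\setminus\bigcup_i \overline{Q_i}\bigr)=0$. Your packing estimate sums $|Df|(Q_i)$ over the open cubes, so to conclude $\sum_i|Df|(Q_i)\ge|Df|(A_t\cap U)$ you need $|Df|(\partial Q_i)=0$ for each $i$; otherwise $|Df|$ could concentrate on the boundaries of the chosen cubes and the chain of inequalities would break at the last step. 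The paper preempts exactly this by building $|Df|(\partial Q)=0$ into the definition of its fine cover (its condition $(e)$), observing that one can always shrink an admissible cube slightly to achieve it since for each $x$ only countably many concentric scales can charge a boundary. Adding the same requirement to your cover closes the gap without affecting anything else in the argument.
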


\begin{proof}
Let $\eps, \delta > 0$. For each point $x \in \spt(|Df|)$, let $\Fcal_\eps(x)$ be the family of cubes $Q\subset \Omega$ satisfying the following properties:
\begin{enumerate}[$(a)$]
    \item $x \in \tau Q$;
    \item $\ell(Q) \le \eps$;
    \item The following approximate continuity estimate holds:
    \[
        \fint_{Q} |p_f^\tau(x) - p_f^\tau(y)| \, d|Df|(y) \le \delta\,.
    \]
    \item The following almost maximizing property holds:
    \[
        \frac{\Osc(f,Q)\ell(Q)^{n-1}}{|Df|(Q)} > p_f^\tau(x) - \delta\,.
    \]
    \item $|Df|(\partial Q)=0$.
\end{enumerate}
Notice that if a cube $Q$ satisfies (a)-(b)-(c)-(d) and $|Df|(\partial Q)>0$, we can slightly shrink $Q$ so that it also satisfies (e). Then by the definition of $p_f^\tau(x)$ and by Lebesgue's differentiation theorem (see Theorem \ref{thm:Lebesgue_diff_tau_cubes_functions}), the family 
    \[
    \Fcal_\eps \coloneqq \bigcup_{x\in \spt(|Df|)}\{\overline{Q}:\, Q\in \Fcal_\eps(x)\}
    \]
    defines a fine cover of $\spt(|Df|)$. By property (a) we can invoke Theorem \ref{thm:tau_vitali}, applied with the measure $\mu^*=|Df|$, to obtain a countable subcollection $ \mathcal F'_{\eps}\subset \mathcal F_{\eps}$ of cubes with disjoint closures satisfying
\begin{equation}\label{eq:Vitali_property_lower_bound}
|Df|\bigg(\Omega \setminus \bigcup_{Q \in \mathcal F'_{\eps}} \overline{Q}\bigg) = 0. 
\end{equation}
We now use $\mathcal{F}_{\eps}'$ as a competitor collection in the definition of $\Gsf_\eps$ to obtain a lower bound estimate:
\begin{align*}
\Gsf_\eps(f,\Omega) & \ge \sum_{Q \in \mathcal F'_{\eps}} \Osc(f,Q)\ell(Q)^{n-1} \\ 
& \overset{(d)}{\ge} \sum_{Q \in \mathcal F'_{\eps}} (p^\tau_f(\bar x_Q)-\delta)|Df|(Q)\\
& \overset{(c)}{\ge} \sum_{Q \in \mathcal F'_{\eps}}\Big(\fint_Q p_f^\tau(y)\,d|Df|(y)-2\delta\Big) |Df|(Q)\\
& = \int_\Omega p_f^\tau(y)\, d|Df|(y)-2\delta |Df|(\Omega)\,.
\end{align*}
In passing to the last equality we used that $|Df|(\partial Q)=0$ for every $Q\in \Fcal_\eps$, together with the Vitali property \eqref{eq:Vitali_property_lower_bound}. Considering first the limit as $\delta\to 0$ and then the limit as $\eps\to 0$ we deduce
\[
\Gsf(f,\Omega)\ge \int_\Omega p_f^\tau(y)\, d|Df|(y).
\]

Since this holds for every $\Omega$, an application of Radon-Nikodym implies that $g_f(x)\ge p_f^\tau(x)$ for $|Df|$-almost every $x$. 
This finishes the proof.
\end{proof}

\begin{remark}[On the restriction $\tau < 1$]\label{rmk:one_dim_tau_1}
In our proof, the restriction $\tau<1$ is essential for the lower bound, as it relies on the availability of a covering theorem. In the one-dimensional case, the proof above applies also to the case $\tau=1$, as Theorem \ref{thm:vitali_general} generalizes Theorem \ref{thm:tau_vitali} to this setting.
\end{remark}

\vspace{0.5cm}

\section{Good families of cubes}\label{sec:good_family}

This section aims to establish the existence of families of cubes that exhibit near-optimality for $\Gsf_\eps$ and possess additional desirable properties. These families will play a pivotal role in the proof of the upper bound, presented in the subsequent section.  The precise statement is the following.

\begin{proposition}[Good families of cubes]\label{prop:good_family}
There exists a dimensional constant $\tau(n)\in (0,1)$ with the following property: If $\tau\in[\tau(n),1)$, then, for each  $\eps,\delta>0$, there exists a family $\Fcal \in \Hcal_{\le \eps}(\Omega)$ satisfying the following properties:
\begin{enumerate}[(i)]
    \item $\Fcal$ is $\delta$-almost maximizing for $\Gsf_\eps$ in the sense that
    \[
        \sum_{Q\in\F} \ell(Q)^{n-1}\, \Osc(f,Q)\ge(1-\delta)\,\Gsf_\eps(f,\Omega)\,.
    \]
    \item Every $Q \in \Fcal$ satisfies
    \[
        \Osc(f,Q)\ell(Q)^{n-1}\geq \frac18 |Df|(Q).
    \]
    \item Every $Q\in\Fcal$ satisfies
    \[
    |Df|(\tau Q)\ge \frac{1}{16} |Df|(Q).
    \]
\end{enumerate}
\end{proposition}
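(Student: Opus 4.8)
The plan is to start from an arbitrary near-maximizing family for $\Gsf_\eps(f,\Omega)$ and successively prune it so that only cubes with the good Poincaré-type and mass-concentration properties (ii)--(iii) survive, while retaining enough total contribution to keep (i). First I would fix $\eps,\delta>0$ and pick a family $\Fcal_0\in\Hcal_{\le\eps}(\Omega)$ with $\sum_{Q\in\Fcal_0}\ell(Q)^{n-1}\Osc(f,Q)\ge(1-\delta/4)\Gsf_\eps(f,\Omega)$; we may assume the sum is finite (truncate to finitely many cubes at a small cost). The key point for (ii) is the Poincaré inequality \eqref{eq:Poincare_on_cube}, which gives $\ell(Q)^{n-1}\Osc(f,Q)\le\frac12|Df|(Q)$ for every cube. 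Splitting $\Fcal_0$ into the subfamily $\Fcal_0^{\mathrm{good}}$ of cubes satisfying (ii) and the complement $\Fcal_0^{\mathrm{bad}}$, on each bad cube $\ell(Q)^{n-1}\Osc(f,Q)<\frac18|Df|(Q)$; if the bad cubes carried a large share of the total we would contradict near-maximality only if we can replace them by a better competitor. This is exactly where the geometric relaxation helps: inside each bad cube $Q$ one can place a \emph{sub}cube $Q'\subset Q$ with $\ell(Q')\le\ell(Q)$ realizing a larger Poincaré quotient — indeed, by monotonicity of $\Gsf_\eps$ and the definition of the local functional there must be a finer subfamily inside $Q$ with contribution at least $\tfrac14$ of $|Df|(Q)$ minus an error. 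Carrying this out carefully (using that $\Gsf_{\eps}(f,Q)\ge\frac14|Df|(Q)$ up to vanishing error, a localized form of \eqref{eq:FMS}) lets us discard the bad cubes and replace them, only improving the sum; after this step every cube satisfies (ii), and we still have at least $(1-\delta/2)\Gsf_\eps(f,\Omega)$.

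Next I would enforce (iii). Here the mechanism is a covering/subdivision argument with a dimensional threshold $\tau(n)$: for a cube $Q$ with $|Df|(\tau Q)<\frac{1}{16}|Df|(Q)$, most of the mass of $|Df|\restr Q$ concentrates in the boundary annulus $Q\setminus\tau Q$. Tiling this annulus by $O_n(1)$ essentially disjoint cubes of sidelength comparable to $(1-\tau)\ell(Q)$ and applying the Poincaré inequality on each, one gets $\sum$ of $\ell^{n-1}\Osc$ over the tiling $\ge c_n(1-\tau)^{-1}\cdot$(mass in the annulus), which for $\tau$ close enough to $1$ (this fixes $\tau(n)$) exceeds $\ell(Q)^{n-1}\Osc(f,Q)\le\frac12|Df|(Q)$. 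So every $\tau$-light cube can be replaced by a finer admissible family with strictly larger contribution; discarding and replacing as before, we may assume (iii) holds for every surviving cube while keeping the contribution $\ge(1-\delta)\Gsf_\eps(f,\Omega)$, which is (i). One must check the replacement cubes still have sidelength $\le\eps$ (they are smaller than the cube they replace) and can be chosen essentially disjoint from the rest of the family (shrink slightly, using $|Df|(\partial Q)=0$ after a perturbation, exactly as in the proof of Proposition \ref{prop:G_is_measure}).

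Two bookkeeping issues need care. First, the two pruning steps interact: replacing a $\tau$-light cube introduces new cubes that might violate (ii), and vice versa. I would resolve this by iterating the procedure, or better, by performing a single selection on the (finite) competitor family: among all finite admissible subfamilies of a common refinement with contribution $\ge(1-\delta)\Gsf_\eps$, take one that is maximal for set inclusion after removing any cube that fails (ii) or (iii); the replacement arguments above show such a maximal family cannot be empty and its cubes must all satisfy (ii) and (iii), since otherwise the replacement would produce a strictly larger competitor, contradicting that we started $\delta$-close to the supremum. Second, the constants $\frac18$ and $\frac{1}{16}$ are not sharp and the argument only needs \emph{some} fixed fractions; I would keep them as stated by being slightly generous in each estimate. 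The main obstacle I anticipate is the \emph{replacement} step for $\tau$-light cubes: producing, inside the annulus $Q\setminus\tau Q$, an essentially disjoint family of admissible cubes whose total $\ell^{n-1}\Osc$ provably beats the single-cube value, uniformly in the position and orientation of $Q$ — this is precisely what forces the dimensional restriction $\tau\ge\tau(n)$ and is the technical heart of the proposition (cf.\ Remark \ref{rmk:limitation_of_tau_n}).
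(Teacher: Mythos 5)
Your treatment of property (ii) is essentially the paper's: start with a near-maximizing finite family, split into good cubes (satisfying (ii)) and bad ones, replace each bad cube by a near-maximizing subfamily for $\Gsf_\eps(f,Q)$, and use the lower bound $\Gsf_\eps(f,Q)\ge\tfrac14|Df|(Q)$ together with near-maximality of the original family to bound the total mass $\sum_{Q\,\text{bad}}|Df|(Q)$, hence the bad cubes' total contribution. That part is correct.

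The second half, enforcing (iii), contains a genuine gap. You propose to take a $\tau$-light cube $Q$ (i.e.\ $|Df|(\tau Q)<\tfrac1{16}|Df|(Q)$), tile the annulus $Q\setminus\tau Q$ by $O_n(1)$ cubes of sidelength $\sim(1-\tau)\ell(Q)$, and claim that ``applying the Poincar\'e inequality on each'' yields
\[
\sum_{Q'}\ell(Q')^{n-1}\Osc(f,Q')\;\ge\;c_n(1-\tau)^{-1}\,|Df|(Q\setminus\tau Q),
\]
which would beat $\tfrac12|Df|(Q)$ once $\tau$ is close to $1$. But Poincar\'e gives the \emph{upper} bound $\ell(Q')^{n-1}\Osc(f,Q')\le\tfrac12|Df|(Q')$; it cannot produce a lower bound on the oscillation. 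The only lower bound available is the localized form of \eqref{eq:FMS}, $\Gsf_\eps(f,Q')\ge\tfrac14|Df|(Q')$, and summing it over the tiling gives at most a factor $\tfrac14\cdot|Df|(Q\setminus\tau Q)\le\tfrac14|Df|(Q)$ — no $(1-\tau)^{-1}$ gain appears, and nothing forces the refined family to beat $\tfrac12|Df|(Q)\ge\ell(Q)^{n-1}\Osc(f,Q)$. The claimed replacement step is therefore unjustified, and with it the maximal-selection workaround for the interaction between the two prunings.

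The paper takes a different and simpler route for (iii): it never performs a second pruning at all. Lemma~\ref{lemma:modified_poincare} gives a modified Poincar\'e inequality $\|f-\fint_{\tau Q}f\|_{L^{1^*}(Q)}\le C(n,\tau)|Df|(Q)$, from which Lemma~\ref{lemma:almost_constant} shows that if $|Df|(\tau Q)\le\eta|Df|(Q)$ then $\ell(Q)^{n-1}\Osc(f,Q)/|Df|(Q)\le\tfrac\eta2+3C(n,\tau)(1-\tau)$. Since $C(n,\tau)\to C(n)$ as $\tau\to1$, one can fix $\tau(n)$ so that $3C(n,\tau)(1-\tau)<\tfrac1{16}$ for $\tau\ge\tau(n)$; then $|Df|(\tau Q)\le\tfrac1{16}|Df|(Q)$ forces $\ell(Q)^{n-1}\Osc(f,Q)<\tfrac18|Df|(Q)$ (Corollary~\ref{corollary:burrata}). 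In other words, for $\tau\ge\tau(n)$, property (ii) \emph{logically implies} property (iii) for every cube — so once (ii) is enforced by the single pruning you already carried out, (iii) comes for free and the interaction worry disappears. You would do well to replace the annulus-tiling argument with this contrapositive.
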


In order to prove the proposition we need a few preliminary results.
First, we will need the following version of the Poincar\'e--Wirtinger inequality.

\begin{lemma}[Modified Poincar\'e–Wirtinger]\label{lemma:modified_poincare} Let $\tau \in (0,1]$ and let $Q$ be any given cube in $\R^n$. There exists a constant  $C(n,\tau)$ such that
    \[
    \left\|f-\fint_{\tau Q}f\right\|_{L^{1^*}(Q)}\leq C (n,\tau) |Df|(Q)
    \]
    for all $f \in BV(Q)$. 
\end{lemma}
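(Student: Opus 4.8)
The plan is to deduce this modified Poincaré–Wirtinger inequality from the standard one (the case $\tau = 1$, which is \eqref{eq:Poincare_on_cube} combined with the $L^1$-to-$L^{1^*}$ Sobolev embedding on cubes) by controlling the difference between the two mean values $\fint_{\tau Q} f$ and $\fint_{Q} f$. More precisely, I would write
\[
f - \fint_{\tau Q} f = \Big( f - \fint_{Q} f \Big) + \Big( \fint_{Q} f - \fint_{\tau Q} f \Big),
\]
and estimate the $L^{1^*}(Q)$-norm of each summand separately by the triangle inequality.

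For the first summand, the classical Sobolev–Poincaré inequality on the cube $Q$ gives $\|f - \fint_Q f\|_{L^{1^*}(Q)} \le C(n)\, |Df|(Q)$; this is the $BV$ analogue of the Gagliardo–Nirenberg–Sobolev estimate, and holds with a dimensional constant after rescaling (one proves it first on $Q_0$ via the coarea formula and the isoperimetric/Sobolev inequality for $BV$, then rescales noting that $\ell(Q)^{n-1}$ and the $L^{1^*}$-scaling match). For the second summand, the quantity $c := \fint_Q f - \fint_{\tau Q} f$ is a constant, so $\|c\|_{L^{1^*}(Q)} = |c| \cdot |Q|^{1/1^*} = |c|\, \ell(Q)^{n-1}$. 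It remains to bound $|c|$: using $\tau Q \subset Q$ and Jensen/averaging,
\[
|c| = \left| \fint_{\tau Q} \Big( f - \fint_Q f\Big)\,dx \right| \le \fint_{\tau Q} \Big| f - \fint_Q f \Big|\, dx \le \frac{|Q|}{|\tau Q|}\, \fint_{Q} \Big| f - \fint_Q f \Big|\, dx = \tau^{-n}\, \Osc(f, Q),
\]
and then $\Osc(f,Q) \le \tfrac12 \ell(Q)^{-(n-1)} |Df|(Q)$ by \eqref{eq:Poincare_on_cube}. Combining, $|c|\,\ell(Q)^{n-1} \le \tfrac{1}{2}\tau^{-n} |Df|(Q)$. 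Adding the two contributions yields the claim with $C(n,\tau) = C(n) + \tfrac12 \tau^{-n}$, which blows up as $\tau \to 0$ but is finite for each fixed $\tau \in (0,1]$ as required.

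I do not anticipate a serious obstacle here; the only point requiring minor care is making the scaling in the Sobolev–Poincaré estimate on a general cube $Q$ explicit (so that the power $\ell(Q)^{n-1}$ appears correctly and cancels against the $L^{1^*}$-normalization), which is a routine change of variables starting from the inequality on the unit cube $Q_0$. One should also note that $f \in BV(Q)$ suffices for all the steps, since $\fint_{\tau Q} f$ and $\fint_Q f$ are well-defined and $|Df|(Q) < \infty$.
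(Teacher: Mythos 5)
Your argument is correct and gives a valid proof, but it takes a genuinely different route from the paper's. Both proofs reduce to bounding the constant difference $c := \fint_Q f - \fint_{\tau Q}f$ and invoking the classical $L^{1^*}$ Sobolev–Poincaré estimate plus the triangle inequality. The paper estimates $|c|$ by writing $f(x) - f(\tau x)$ via the fundamental theorem of calculus along the radial deformation $t \mapsto tx + (1-t)\tau x$, a change of variables that yields
\[
|c| \le \frac{\sqrt n}{2}\,\frac{1-\tau}{\tau^{n+1}}\,|Df|(Q)
\]
(after normalizing $|Q|=1$). You instead bound $|c| \le \fint_{\tau Q}|f - \fint_Q f| \le \tau^{-n}\Osc(f,Q)$ by Jensen and set containment, and then use the Poincaré–Wirtinger inequality \eqref{eq:Poincare_on_cube} on $Q$. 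Your approach is more elementary — no change of variables, no reduction to smooth $f$, no integral estimate — and even gives a milder blow-up as $\tau\to 0$ (a $\tau^{-n}$ factor rather than $\tau^{-(n+1)}$). The paper's approach gives the extra feature that the correction term vanishes as $\tau\to 1$, so $C(n,\tau)\to C(n)$; in your bound the correction stabilizes at $\tfrac12$ as $\tau\to 1$. That difference is immaterial for the downstream use (Corollary~\ref{corollary:burrata} only needs $C(n,\tau)$ bounded near $\tau=1$ so that $C(n,\tau)(1-\tau)\to 0$), so your simpler constant serves the rest of the argument equally well.
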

\begin{proof} By scaling we can assume that $Q$ has volume one. By a standard approximation result for $BV$ maps, it suffices to prove the statement for $f \in C^1(\overline Q)$. 
For each $t \in (0,1)$, we will use the change of variables $y_t = tx + (1-t)\tau x$, which maps $Q$ into $[\tau + t(1 - \tau)]Q$. With this in mind, we apply the fundamental theorem of calculus to deduce the estimate
    \begin{align*}
        \left|\fint_Q f - \fint_{\tau Q}f\right| & = \left|\int_Q f(x) - f(\tau x) \, dx\right| \\
        & = \left| \int_Q \int_0^1 \nabla f(tx + (1-t)\tau x) \cdot (1 - \tau)x \,dt\,dx\right|\\
        & \le (1 - \tau)  \int_Q \int_0^1 |x| |\nabla f|(tx + (1-t)\tau x) \, dt \, dx \\
        & \le (1 - \tau) \int_0^1  \int_{[t+(1-t)\tau]Q} \frac{|y_t|}{|\tau + t(1 - \tau)|^{n+1}} |\nabla f|(y_t)\, dy_t \, dt \\
        & \le \frac{\sqrt{n}}{2}(1 - \tau) \int_0^1 \frac{| Df|([t+(1-t)\tau]Q)}{|\tau + t(1 - \tau)|^{n+1}} \, dt\\
        & \le\frac{\sqrt{n}}{2} \frac{1-\tau}{\tau^{n+1}}|Df|(Q)\,.
    \end{align*}
    In particular, this allows us to estimate the $L^1$-difference between the averages on the large and small cubes as
    \[
        \left\| \fint_Q f - \fint_{\tau Q} f\right\|_{L^{1^*}(Q)} \le \frac{\sqrt{n}}{2} \frac{1-\tau}{\tau^{n+1}}|Df|(Q)\,. 
    \]
    In light of this estimate and the triangle inequality, we further get
    \begin{align*}
        \left\| f - \fint_{\tau Q}f\right\|_{L^{1^*}(Q)} & \le \left\| f - \fint_Q f\right\|_{L^{1^*}(Q)} + \left\| \fint_Q f - \fint_{\tau Q} f \right\|_{L^{1^*}(Q)} \\
        & \le \left(C(n) +  \frac{\sqrt{n}}{2} \frac{1-\tau}{\tau^{n+1}} \right)|Df|(Q), 
    \end{align*}
    where $C(n)$ is the classical Poincaré-Wirtinger constant for functions in $L^{1^*}(Q)$. 
    This finishes the proof.
\end{proof}

Next, we prove two technical lemmas that quantify the amount of mean oscillation on functions whose derivative concentrates near the boundary of a cube.

\begin{lemma}\label{lemma:almost_constant}
    Let $\tau \in (0,1)$, $\eta>0$ and let $Q$ be a given cube in $\R^n$ such that 
    \[
    |Df|(\tau Q)\leq \eta |Df|(Q)\,.
    \]
    Then,  
    \[
    \frac{\Osc(f,Q)}{|Df|(Q)}\ell(Q)^{n-1}\leq \frac{\eta}{2}+3C(n,\tau)(1-\tau)\,, 
    \]
    where $C(n,\tau)$ is the constant of Lemma~\ref{lemma:modified_poincare}.
\end{lemma}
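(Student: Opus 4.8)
The plan is to control the mean oscillation $\Osc(f,Q) = \fint_Q |f - \fint_Q f|$ by comparing the full average $\fint_Q f$ with the average on the small cube $\fint_{\tau Q} f$ and then splitting $Q$ into the inner part $\tau Q$, where $f$ is nearly constant, and the thin shell $Q \setminus \tau Q$, where the derivative concentrates. First I would estimate
\[
\Osc(f,Q) \le \fint_Q \Big| f - \fint_{\tau Q} f \Big| \, dx \le \fint_Q |f - c|\,dx + \Big| c - \fint_{\tau Q} f\Big|
\]
for any constant $c$; here I use that $\fint_Q|f-\fint_Q f| \le \fint_Q|f - a|$ for every constant $a$, applied with $a = \fint_{\tau Q} f$. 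A natural choice is $c = \fint_{\tau Q} f$ itself, which kills the second term, reducing the problem to bounding $\fint_Q |f - \fint_{\tau Q} f|\,dx$. On the inner cube $\tau Q$ this quantity is controlled by the hypothesis $|Df|(\tau Q) \le \eta |Df|(Q)$ together with the Poincaré--Wirtinger inequality on $\tau Q$ (giving a contribution of order $\frac{\eta}{2}|Df|(Q)\ell(Q)^{1-n}$, matching the $\tfrac{\eta}{2}$ term after rescaling), while on the thin shell $Q \setminus \tau Q$ one wants to say that the small Lebesgue measure of the shell compensates for possibly large oscillation.

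The clean way to organize the shell estimate is via Hölder's inequality against the Sobolev exponent $1^* = \frac{n}{n-1}$: writing $g := f - \fint_{\tau Q} f$,
\[
\fint_Q |g|\,dx = \frac{1}{|Q|}\int_Q |g|\,dx \le \frac{1}{|Q|}\, \|g\|_{L^{1^*}(Q)}\, |Q|^{1 - \frac{1}{1^*}} = \|g\|_{L^{1^*}(Q)}\, |Q|^{\frac{1}{n} - 1},
\]
and then apply the Modified Poincaré--Wirtinger inequality (Lemma~\ref{lemma:modified_poincare}) with the \emph{same} contraction factor $\tau$ to get $\|g\|_{L^{1^*}(Q)} \le C(n,\tau)|Df|(Q)$. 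Since $|Q| = \ell(Q)^n$, this yields $\fint_Q |g|\,dx \le C(n,\tau)\,\ell(Q)^{1-n}|Df|(Q)$, i.e. $\frac{\Osc(f,Q)}{|Df|(Q)}\ell(Q)^{n-1} \le C(n,\tau)$. This already gives a bound of the right shape but without the gain coming from $\eta$ being small; to recover the sharp $\frac{\eta}{2} + 3C(n,\tau)(1-\tau)$ form I would instead split $Q = \tau Q \cup (Q \setminus \tau Q)$ before applying Hölder, estimating $\int_{\tau Q}|g|$ by the classical Poincaré inequality on $\tau Q$ (whose total variation is at most $\eta|Df|(Q)$, producing the $\frac{\eta}{2}$) and estimating $\int_{Q\setminus\tau Q}|g|$ by Hölder on the shell, using $|Q \setminus \tau Q| = (1-\tau^n)\ell(Q)^n \le n(1-\tau)\ell(Q)^n$ to extract a factor $(1-\tau)^{1/n}$ or, after a more careful bookkeeping, $(1-\tau)$; combined with the $L^{1^*}$-bound from Lemma~\ref{lemma:modified_poincare} this gives the $3C(n,\tau)(1-\tau)$ term. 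One must also absorb the error $|\fint_Q f - \fint_{\tau Q}f|$, which by the computation inside the proof of Lemma~\ref{lemma:modified_poincare} is itself $O((1-\tau))|Df|(Q)\ell(Q)^{1-n}$, contributing to the same constant.

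The main obstacle is the sharp tracking of the power of $(1-\tau)$ in the shell term: a naive Hölder estimate on $Q \setminus \tau Q$ only produces $(1-\tau)^{1/n}$, which is weaker than the claimed $(1-\tau)$, so one needs to use the Sobolev inequality more cleverly — e.g. applying Poincaré--Sobolev to $g$ restricted to the shell (a Lipschitz domain with controlled constants as $\tau \to 1$), or exploiting that $\fint_{\tau Q} g = 0$ so that $g$ has small average on a large sub-cube, which via a standard telescoping/chaining argument forces $\|g\|_{L^1(Q\setminus\tau Q)}$ to be genuinely first-order in $(1-\tau)$. Everything else (the triangle-inequality splitting, the use of Lemma~\ref{lemma:modified_poincare}, the rescaling to $|Q|=1$) is routine.
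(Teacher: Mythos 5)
Your approach is essentially the same as the paper's: reduce the oscillation to $\fint_Q|f-\fint_{\tau Q}f|$ via the normalization $\fint_{\tau Q}f=0$, treat $\tau Q$ by the classical Poincar\'e inequality to get the $\eta/2$ term, and control the shell $Q\setminus\tau Q$ by H\"older against the $L^{1^*}$-norm supplied by Lemma~\ref{lemma:modified_poincare}. Two remarks on the details.

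First, a small bookkeeping error: the inequality $\fint_Q|f-\fint_Q f|\le \fint_Q|f-a|$ that you invoke is \emph{false} in general --- the $L^1$-optimal centering constant is the median, not the mean (e.g.\ take $f=\1_{(0,1/4)}$ on $(0,1)$ and $a=0$). The correct statement carries a factor of $2$, so your route gives $\Osc(f,Q)\le 2\fint_Q|f-\fint_{\tau Q}f|$ and hence $\eta$ rather than $\eta/2$ for the inner contribution. The paper avoids the loss by keeping $\fint_Q f$ as the center throughout and applying the triangle inequality to $|f-\fint_Q f|$ on each of $\tau Q$ and $Q\setminus\tau Q$ separately, writing for instance $|f-\fint_Q f|\le |f-\fint_{\tau Q}f|+|\fint_{\tau Q}f-\fint_Q f|$ on the inner cube. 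This keeps the $\eta/2$.

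Second --- and this is the more interesting point --- the obstacle you flag, that H\"older on the shell only yields a factor $|Q\setminus\tau Q|^{1/n}=(1-\tau^n)^{1/n}\sim(1-\tau)^{1/n}$ rather than $(1-\tau)$, is \emph{also present in the paper's own proof}: the step
\[
\int_{Q\setminus\tau Q}|f|\;\le\;\Big(\int_{Q\setminus\tau Q}|f|^{1^*}\Big)^{1/1^*}|Q\setminus\tau Q|^{1-1/1^*}
\]
has $1-1/1^*=1/n$, so the right-hand side is $C(n,\tau)(1-\tau^n)^{1/n}$, and one does \emph{not} have $(1-\tau^n)^{1/n}\le 1-\tau$ for $n\ge2$ (rather the reverse). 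So the bound $3C(n,\tau)(1-\tau)$ as written is not justified by the argument. However, you should resist the temptation to pursue the telescoping/Lipschitz-shell argument you sketch: the only downstream use of this lemma is Corollary~\ref{corollary:burrata}, which merely needs the shell term to tend to $0$ as $\tau\to 1$ while $\eta=1/16$ is fixed, and $(1-\tau^n)^{1/n}\to 0$ just as well as $(1-\tau)$. Replacing the lemma's conclusion by $\frac{\eta}{2}+3C(n,\tau)(1-\tau^n)^{1/n}$ (or simply $\frac{\eta}{2}+o(1)$ as $\tau\to1$, uniformly in $\eta$) makes the whole chain rigorous with no extra work, and the dimensional constant $\tau(n)$ in Proposition~\ref{prop:good_family} is unaffected. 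So the right move is to accept the weaker H\"older bound, note that it suffices, and carry out the four-term estimate explicitly, rather than try to manufacture a genuine first-order-in-$(1-\tau)$ estimate on the shell.
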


\begin{proof}
    Since the left-hand side is invariant under translations, dilations, and multiplications by constants, there is no loss of generality in assuming that $Q$ is the unit cube, $|Df|(Q)=1$, and $\int_{\tau Q}f=0$. By Lemma \ref{lemma:modified_poincare} we get 
    \[ \left(\int_{Q}|f|^{1^*}\right)^{1/1^*}=\left(\int_{Q}\left\vert f-\fint_{\tau Q}f\right\vert ^{1^*}\right)^{1/1^*}\leq C(n,\tau) |Df|(Q)=C(n,\tau).
    \]
    From this estimate we deduce the following auxiliary estimates: First, by H\"older's inequality we get
    \begin{align*}
    \left|\int_{Q\setminus \tau Q}f\right|&\le\int_{Q\setminus \tau Q}|f|\\
    & \le \left(\int_{Q\setminus \tau Q} |f|^{1^*}\right)^{1/1^*} |Q\setminus \tau Q|^{1-1/1^*}\le C(n,\tau) (1-\tau).
    \end{align*}
    On the other hand, the triangle inequality gives
    \[
    \left|\int_Q f\right|\le \left|\int_{\tau Q} f\right|+\left|\int_{Q\setminus \tau Q}f\right|\le C(n,\tau)(1-\tau)\,. 
    \]
    Breaking $\Osc(f,Q)$ into smaller quantities that can be bounded by the quantities above, we obtain the sought estimate:
    \begin{align*}
        \Osc(f,Q)&=\int_Q\left|f-\int_Q f\right|=\int_{\tau Q}\left|f-\int_Q f\right|+\int_{Q\setminus \tau Q}\left|f-\int_Q f\right|\\
        & \le \int_{\tau Q}\left|f-\fint_{\tau Q}f\right| + \int_{\tau Q} \left|\fint_{\tau Q}f-\int_Q f\right|+\int_{Q\setminus \tau Q}|f| + \int_{Q\setminus \tau Q}\left|\int_Q f\right|\\
        &\le |\tau Q| \,\frac{1}{2}|Df|(\tau Q)+|\tau Q| C(n,\tau)(1-\tau) \\
        & \qquad +C(n,\tau)(1-\tau)+C(n,\tau)(1-\tau)^{n+1}\\
        &\le \frac{\eta}{2}+3C(n,\tau)(1-\tau).
    \end{align*}
    This finishes the proof.
\end{proof}

\begin{corollary}\label{corollary:burrata}
    There exists a dimensional constant $\tau(n) \in  (0,1)$ with the following property: if $\tau\in[\tau(n),1)$ and if $f \in BV(Q)$ is such that 
    \[
    |Df|(\tau Q)\leq \frac{1}{16} |Df|(Q)\,,
    \]
    then
    \[
    \Osc(f,Q)\ell(Q)^{n-1}<\frac18|Df|(Q)\,.
    \]
\end{corollary}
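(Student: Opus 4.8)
The plan is to derive Corollary~\ref{corollary:burrata} directly from Lemma~\ref{lemma:almost_constant} by specializing the parameter $\eta$ and then choosing $\tau$ close enough to $1$ to absorb the remaining term. Concretely, I would apply Lemma~\ref{lemma:almost_constant} with $\eta = \tfrac{1}{16}$, which is exactly the concentration hypothesis assumed in the corollary, to obtain
\[
    \frac{\Osc(f,Q)}{|Df|(Q)}\,\ell(Q)^{n-1} \le \frac{1}{32} + 3\,C(n,\tau)\,(1-\tau)\,.
\]
So the whole matter reduces to showing that the error term $3\,C(n,\tau)(1-\tau)$ can be made strictly smaller than $\tfrac18 - \tfrac1{32} = \tfrac{3}{32}$ once $\tau$ is taken sufficiently close to $1$.

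The key step is therefore to understand the behaviour of $C(n,\tau)(1-\tau)$ as $\tau \uparrow 1$. From the proof of Lemma~\ref{lemma:modified_poincare}, the admissible constant is $C(n,\tau) = C(n) + \tfrac{\sqrt n}{2}\,\tfrac{1-\tau}{\tau^{n+1}}$, where $C(n)$ is the classical Poincar\'e--Wirtinger constant for $L^{1^*}(Q)$. Hence
\[
    3\,C(n,\tau)(1-\tau) = 3\,C(n)(1-\tau) + \frac{3\sqrt n}{2}\,\frac{(1-\tau)^2}{\tau^{n+1}}\,,
\]
and the right-hand side tends to $0$ as $\tau \to 1^-$. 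Consequently there is a threshold $\tau(n) \in (0,1)$, depending only on $n$, such that $3\,C(n,\tau)(1-\tau) < \tfrac{3}{32}$ for every $\tau \in [\tau(n),1)$; one may simply \emph{define} $\tau(n)$ to be, say, the infimum of all $\tau_0 \in (0,1)$ for which $3\,C(n)(1-\tau) + \tfrac{3\sqrt n}{2}(1-\tau)^2/\tau^{n+1} < \tfrac{3}{32}$ holds on $[\tau_0,1)$. For such $\tau$ we then get $\Osc(f,Q)\,\ell(Q)^{n-1} < (\tfrac1{32} + \tfrac{3}{32})\,|Df|(Q) = \tfrac18\,|Df|(Q)$, which is the claimed strict inequality.

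There is essentially no real obstacle here beyond bookkeeping: the lemma does all the analytic work and the corollary is a quantitative tuning of constants. The one point deserving a little care is that the statement asks for the \emph{same} dimensional constant $\tau(n)$ that appears in Proposition~\ref{prop:good_family}; so when writing this up I would phrase the choice of $\tau(n)$ so that it simultaneously serves this corollary and is compatible with the (mild) requirements imposed later in Section~\ref{sec:good_family}. It is also worth noting that the strictness of the final inequality is free, since $\eta/2 = 1/32$ is strict slack below $1/8$ already before adding the (arbitrarily small, but nonzero) error term — indeed one even has room to spare, which is presumably why the threshold $1/16$ and the target $1/8$ were chosen with a factor-of-two margin.
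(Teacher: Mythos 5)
Your proposal is correct and follows exactly the paper's one-line argument: apply Lemma~\ref{lemma:almost_constant} with $\eta=\tfrac{1}{16}$ and observe that $3C(n,\tau)(1-\tau)\to 0$ as $\tau\uparrow 1$, so the error term can be absorbed. The only difference is cosmetic --- you compute the exact slack $\tfrac{3}{32}$, whereas the paper is content with the (slightly more demanding but simpler-looking) threshold $3C(n,\tau)(1-\tau)<\tfrac{1}{16}$ --- and you additionally spell out from the explicit form of $C(n,\tau)$ why the limit is zero, a detail the paper leaves implicit.
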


\begin{proof}
It suffices to apply Lemma \ref{lemma:almost_constant} with $\eta=\frac{1}{16}$ and notice that for $\tau$ sufficiently close to $1$ it holds $3C(n,\tau)(1-\tau)<\tfrac{1}{16}$. 
\end{proof}

We are now ready to prove Proposition \ref{prop:good_family}.

\begin{proof}[Proof of Proposition \ref{prop:good_family}]
    Let $\eta\in (0,\tfrac12)$. By the definition of $\Gsf_\eps$, we can find a disjoint collection $\Qcal \in \Hcal_{\le \eps}(\Omega)$ satisfying
    \begin{equation}\label{eq:eta1}
    \sum_{Q\in \Qcal}\Osc(f,Q)\ell(Q)^{n-1}\ge (1-\eta)\Gsf_\eps(f,\Omega)\,.
    \end{equation}
    We then define a ``good'' subfamily of $\Qcal$ by setting 
    \[
    \Gcal:=\left\{Q\in\Qcal : \Osc(f,Q)\ell(Q)^{n-1}\ge \tfrac18|Df|(Q)\right\}\,.
    \]
    Accordingly, we call $\Bcal:=\Qcal\setminus \Gcal$ the ``bad'' subfamily of $\Qcal$. For each $Q\in \Bcal$, we can select a collection $\Pcal(Q)$ of pairwise disjoint cubes, contained in $Q$ and satisfying 
    \begin{equation}\label{eq:eta3}
        \sum_{Q'\in\Pcal(Q)}\Osc(f,Q')\ell(Q')^{n-1}\ge (1-\eta)\Gsf_\eps(f,Q)\,.
    \end{equation}
    Note that the family $\widetilde\Qcal:=\Gcal\cup \bigcup_{Q\in\Bcal}\Pcal(Q)$ is still an admissible family of disjoint cubes and hence, by definition,
    \begin{equation}\label{eq:eta2}
        \sum_{Q\in\widetilde\Qcal}\Osc(f,Q)\ell(Q)^{n-1}\le \Gsf_\eps(f,\Omega)\,.
    \end{equation}
    From~\eqref{eq:eta1}-\eqref{eq:eta2} and the definition of $\Qcal$ we infer that
    \begin{align*}
        \eta \Gsf_\eps(f,\Omega)&\ge \sum_{Q\in\widetilde\Qcal}\Osc(f,Q)\ell(Q)^{n-1}-\sum_{Q\in \Qcal}\Osc(f,Q)\ell(Q)^{n-1}\\
        &=\sum_{Q\in\Bcal}\left(\sum_{Q'\in \Pcal(Q)}\Osc(f,Q)\ell(Q)^{n-1}-\Osc(f,Q)\ell(Q)^{n-1}\right)\\
        &\stackrel{\eqref{eq:eta3}}\ge \sum_{Q\in\Bcal} ((1-\eta)\Gsf_\eps(f,Q)-\tfrac18 |Df|(Q)) \eqqcolon A \,.
    \end{align*}
    Using the lower bound $\tfrac14|Df|(Q)\leq \Gsf(f,Q)\le\Gsf_\eps(f,Q)$ (see \eqref{eq:FMS}) and the Poincar\'e inequality $\Osc(f,Q)\ell(Q)^{n-1}\le \tfrac12 |Df|(Q)$ we further estimate 
    \begin{equation}\label{eq:eta4}
      \begin{split}
          \eta \Gsf_\eps(f,\Omega) \ge A& \ge  \sum_{Q\in\Bcal} (\tfrac14(1-\eta)-\tfrac18) |Df|(Q) \\
        & = \tfrac18(1-2\eta)\sum_{Q\in\Bcal}  |Df|(Q)\\
        &\ge \tfrac14(1-2\eta)\sum_{Q\in\Bcal}  \Osc(f,Q)\ell(Q)^{n-1}\,.
      \end{split}   
\end{equation}
    By construction, the collection $\Gcal$ satisfies the following properties:
    \begin{enumerate}[(a)]
        \item $\ell(Q)\le\eps$ for every $Q\in \Gcal$;
        \item it satisfies
        \begin{align*}
            \sum_{Q\in\Gcal} \Osc(f,Q)\ell(Q)^{n-1}&= \sum_{Q\in\Qcal} \Osc(f,Q)\ell(Q)^{n-1}-\sum_{Q\in\Bcal} \Osc(f,Q)\ell(Q)^{n-1}\\
            &\stackrel{\eqref{eq:eta1}-\eqref{eq:eta4}}\ge (1-\eta)\Gsf_\eps(f,\Omega)-\frac{4\eta}{1-2\eta}\Gsf_\eps(f,\Omega) \\
            &=\left(1-\eta\frac{5-2\eta}{1-2\eta}\right) \Gsf_\eps(f,\Omega)\,;
        \end{align*}
        \item and $\Osc(f,Q)\ell(Q)^{n-1}\ge \tfrac18|Df|(Q)$ for every $Q\in\Gcal$ (by the definition of good subfamily).
    \end{enumerate}
    Choosing $\eta$ small enough so that $\eta\frac{5-2\eta}{1-2\eta}\le\delta$ we find out that $\Gcal$ satisfies~$(i)$ and~$(ii)$. Finally, $(iii)$ follows from $(ii)$ by applying Corollary \ref{corollary:burrata}.
\end{proof} 

\vspace{0.5cm}

\section{Upper bound} \label{sec:upper_bound}
In this section we prove that $p^\tau_f(x)$ bounds $g_f(x)$ from above for $|Df|$-almost every $x \in \R$.

\begin{lemma}\label{lemma:g_le_p_tau} Let $\tau(n) \le \tau \le 1$, where $\tau(n)$ is the constant defined in Corollary \ref{corollary:burrata}. Then, for $|Df|$-almost every $x \in \Omega$ it holds
\begin{equation}\label{eq:upper_bound}
    g_f(x) \le p^\tau_f(x).
\end{equation}
\end{lemma}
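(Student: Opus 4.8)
The plan is to exploit the good families of cubes from Proposition~\ref{prop:good_family} to produce, for each small $\eps$, a near-optimal competitor collection $\Fcal$ for $\Gsf_\eps(f,\Omega)$ consisting of cubes that (thanks to property (iii)) carry a definite fraction of the total variation in their $\tau$-contractions. For such a cube $Q$ with $x \in \tau Q$, the Poincar\'e quotient $P_f(Q)$ is by definition at most $P_f^\tau(x,\eps)$, which tends to $p_f^\tau(x)$. The issue is that the distinguished point $x$ must be chosen so that $x \in \tau Q$, and the quotient bound only becomes effective after passing to the limit; so I expect to need a measure-theoretic argument that selects, inside a large-$|Df|$-measure subset of each $Q$, a point where $p_f^\tau$ is close to its $|Df|$-average on $Q$, and where the limit defining $p_f^\tau$ is attained up to a small error at scale $\eps$. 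This is the analogue of the Lebesgue-point reasoning used in Lemma~\ref{lemma:g_ge_p_tau}, but run in the opposite direction.

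\textbf{Main steps.} First I would fix $\eps,\delta>0$ and invoke Proposition~\ref{prop:good_family} to get a family $\Fcal \in \Hcal_{\le\eps}(\Omega)$ which is $\delta$-almost maximizing for $\Gsf_\eps$ and satisfies $|Df|(\tau Q) \ge \tfrac1{16}|Df|(Q)$ for every $Q \in \Fcal$. Second, for each $Q \in \Fcal$ I want a point $\bar x_Q \in \tau Q$ such that $p_f^\tau(\bar x_Q)$ is comparable to the $|Df|$-average of $p_f^\tau$ over $Q$ (up to $\delta$): since $|Df|(\tau Q)$ controls a fixed fraction of $|Df|(Q)$, an averaging argument (Chebyshev on $\tau Q$ against the average on $Q$) produces such a point, at least after discarding a further small-measure exceptional set and passing to a fine subcover via Theorem~\ref{thm:tau_vitali} so that almost all of $\spt(|Df|)$ is exhausted. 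Third, because $\bar x_Q \in \tau Q$ and $\ell(Q) \le \eps$, the cube $Q$ is admissible in the supremum~\eqref{eq:P^tau}, hence $P_f(Q) \le P_f^\tau(\bar x_Q,\eps)$; summing,
\[
\Gsf_\eps(f,\Omega) \le \frac{1}{1-\delta}\sum_{Q\in\Fcal} P_f(Q)\,|Df|(Q) \le \frac{1}{1-\delta}\sum_{Q\in\Fcal} P_f^\tau(\bar x_Q,\eps)\,|Df|(Q).
\]
Fourth, I would replace $P_f^\tau(\bar x_Q,\eps)$ by $p_f^\tau(\bar x_Q)$: the function $\eps \mapsto P_f^\tau(x,\eps)$ decreases to $p_f^\tau(x)$, so on a set of $|Df|$-measure $\ge |Df|(\Omega)-\delta$ (by Egorov) the convergence is uniform, giving $P_f^\tau(\bar x_Q,\eps) \le p_f^\tau(\bar x_Q) + \delta$ for $\eps$ small and $\bar x_Q$ in that good set; the cubes meeting the complement contribute at most $\tfrac12|Df|$ of a small-measure set, hence negligibly. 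Fifth, using step two to pass from $p_f^\tau(\bar x_Q)$ back to the average over $Q$ and then summing the averages, I get
\[
\Gsf_\eps(f,\Omega) \le \frac{1}{1-\delta}\left(\int_\Omega p_f^\tau(y)\,d|Df|(y) + C\delta\,|Df|(\Omega)\right).
\]
Letting $\eps \to 0$ and then $\delta \to 0$ yields $\Gsf(f,\Omega) \le \int_\Omega p_f^\tau\,d|Df|$ for every open $\Omega$, and a Radon–Nikodym comparison (as at the end of Lemma~\ref{lemma:g_ge_p_tau}) upgrades this to the pointwise bound $g_f(x) \le p_f^\tau(x)$ for $|Df|$-a.e.\ $x$.

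\textbf{Main obstacle.} The delicate point is step two combined with step four: one must choose the distinguished point $\bar x_Q$ \emph{inside} $\tau Q$ (so that $Q$ is admissible for $P_f^\tau(\bar x_Q,\cdot)$) while simultaneously ensuring that at $\bar x_Q$ both the Borel function $p_f^\tau$ is close to its $|Df|$-average over $Q$ \emph{and} the approximation $P_f^\tau(\bar x_Q,\eps) \approx p_f^\tau(\bar x_Q)$ holds uniformly. This is exactly where property (iii) of the good family is indispensable — without a lower bound on $|Df|(\tau Q)$ in terms of $|Df|(Q)$, the mass of $p_f^\tau$ could concentrate in $Q \setminus \tau Q$ and no admissible point would see the correct average. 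I would handle it by fixing once and for all a ``good set'' $G \subset \spt(|Df|)$ of $|Df|$-measure $\ge |Df|(\Omega) - \delta$ on which $P_f^\tau(\cdot,\eps) \to p_f^\tau$ uniformly (Egorov) and $p_f^\tau$ is $|Df|$-approximately continuous, then only keeping cubes $Q$ for which $|Df|(\tau Q \cap G) \ge \tfrac1{32}|Df|(Q)$; the remaining cubes have total $|Df|$-mass bounded by a multiple of $\delta$ and are discarded, while on the kept cubes the averaging over $\tau Q \cap G$ delivers the required point $\bar x_Q$.
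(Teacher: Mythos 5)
Your plan is essentially the paper's own proof, with the same two-stage structure: extract a good family $\Fcal$ from Proposition~\ref{prop:good_family}, split it according to whether $\tau Q$ meets a designated good set, control the discarded cubes via property~(iii) together with Poincar\'e, and on the kept cubes pass from $\Osc(f,Q)\ell(Q)^{n-1}/|Df|(Q)$ to $P_f^\tau(\bar x_Q,\cdot)$ to $p_f^\tau(\bar x_Q)$ to $\fint_Q p_f^\tau\,d|Df|$, then sum, take limits, and apply Radon--Nikodym. You correctly identify that the good set $G$ must carry two uniform properties, a uniform Lebesgue-point estimate for $p_f^\tau$ (with respect to $\tau$-uncentered cubes, Theorem~\ref{thm:Lebesgue_diff_tau_cubes_functions}) and a uniform bound $P_f^\tau(\cdot,\eps)\le p_f^\tau+\delta$; the paper gets the latter by monotone convergence and continuity of measure where you invoke Egorov, which is fine.

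Two small slips in the description, neither fatal to the scheme but both misleading about what carries the argument. First, the ``Chebyshev on $\tau Q$ against the average on $Q$'' step is not what actually produces $\bar x_Q$, and on its own it cannot: if, say, $p_f^\tau\equiv\tfrac12$ on $\tau Q$ and $p_f^\tau\equiv\tfrac14$ on $Q\setminus\tau Q$ with $|Df|(\tau Q)=\tfrac1{16}|Df|(Q)$, then every point of $\tau Q$ overshoots $\fint_Q p_f^\tau\,d|Df|$ by roughly $\tfrac14$, and no averaging on $\tau Q$ rescues you. What does the work is precisely the uniformized $|Df|$-Lebesgue estimate on $G$: once $G$ is chosen so that $p_f^\tau(x)\le\fint_Q p_f^\tau\,d|Df|+\delta$ holds for \emph{every} $x\in G$ and \emph{every} admissible cube $Q\ni x$ ($x\in\tau Q$, $\ell(Q)\le\eps$), any point of $\tau Q\cap G\ne\emptyset$ serves as $\bar x_Q$; no averaging is required, and the lower bound $|Df|(\tau Q\cap G)\ge\tfrac1{32}|Df|(Q)$ can be weakened to mere nonemptiness, with the discarded cubes controlled via disjointness of the $\tau Q$'s and property~(iii). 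Second, the appeal to Theorem~\ref{thm:tau_vitali} is out of place in the upper bound: the Vitali covering argument belongs to the lower bound (Lemma~\ref{lemma:g_ge_p_tau}), whereas here the family $\Fcal$ from Proposition~\ref{prop:good_family} is already a near-maximizing competitor for $\Gsf_\eps$ and need not (and typically does not) cover $\spt(|Df|)$. With these two adjustments your argument coincides with the paper's.
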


\begin{proof} 
We begin by recalling the definition 
\[
P_f^\tau(x,\rho):= \sup_{\substack{y\in \tau Q\\ \ell(Q) \le \rho}}\frac{\Osc(f,Q)}{|Df|(Q)}\ell(Q)^{n-1}. 
\]
Given $\delta,\rho>0$ define the set 
\begin{equation}\label{eq:E_rho_delta_definition}
E_{\rho,\delta} := \left\{y\in\Omega: P_f^\tau(y,\rho)<p_f^\tau(y)+\delta\right\}.
\end{equation}
Fix now $\delta>0$. Then we can find $\rho>0$ small enough such that $|Df|(\Omega\setminus E_{\rho,\delta})\le \delta$. Let us consider the set $A_{\rho,\delta}\subseteq E_{\rho,\delta}$ of $|Df|$-Lebesgue points of the function $p_f^\tau$, namely the points $x$ satisfying
\begin{equation}\label{eq:Lebesgue_upper_bound}
\lim_{r\to 0}\fint_{Q_r}|p_f^\tau(y)-p_f^\tau(x)|\,d|Df|(y)=0
\end{equation}
for every family of cubes $Q_r$ such that $\ell(Q_r)=r$, $x\in \tau Q_r$. By Theorem \ref{thm:Lebesgue_diff_tau_cubes_functions}, $|Df|$-almost every point has this property, hence
$|Df|(\Omega\setminus A_{\rho,\delta})\le \delta$. Moreover, by choosing some $\eps\in(0,\rho)$ small enough and using \eqref{eq:Lebesgue_upper_bound}, we can find a subset $C_{\rho,\delta}\subseteq A_{\rho,\delta}$ satisfying $|Df|(\Omega\setminus C_{\rho,\delta})\le 2\delta$ and such that for every $x\in C_{\rho,\delta}$ it holds
\begin{equation}\label{eq:Lebesgue_continuity_p_tau}
p_f^\tau(x)\le \fint_Q p_f^\tau(y)\,d|Df|(y)+\delta
\end{equation}
for every $Q$ with $\ell(Q)\le\eps$, $x\in \tau Q$.

By Lemma~\ref{prop:good_family} we can find a \emph{good family} $\mathcal{F}_\eps$ of disjoint cubes contained in $\Omega$: 
\begin{equation}\label{eq:good_family_upper_bound}
    \ell(Q)\leq \eps, \quad |Df|(\tau Q)\ge \frac{1}{16}|Df|(Q) 
\end{equation}
for every $Q\in \Fcal_\eps$, and
\begin{equation}\label{eq:almost_maximising}
\Gsf_\eps(f,\Omega)\leq \delta |Df|(\Omega)+ \sum_{Q \in \mathcal F_{\eps}}\Osc(f,Q)\ell(Q)^{n-1}.
\end{equation}

The next step is to split the last summand in~\eqref{eq:almost_maximising} into two further sums. One, over cubes $Q$ such that $\tau Q$ intersects $C_{\rho,\delta}$, and the second, as the sum over the remaining cubes. To this end, let us introduce the family 
\[
\Acal \coloneqq \{ Q \in \Fcal_\e : \tau Q\cap C_{\rho,\delta}\neq \emptyset\}\,.
\]

Let us first consider cubes $Q \in \Fcal_\e \setminus \Acal$. 
Poincar\'e inequality yields the bound
\begin{align}
\sum_{Q \in \mathcal F_{\eps} \setminus \Acal}  \Osc(f,Q)\ell(Q)^{n-1}  &\le \sum_{Q \in \mathcal F_{\eps} \setminus \Acal} 
\tfrac12 |Df|(Q) \nonumber \\ 
&\overset{\eqref{eq:good_family_upper_bound}}{\le} \sum_{Q \in \mathcal F_{\eps} \setminus \Acal} 
8 |Df|(\tau Q) \nonumber \\ 
&\le 8|Df|(\Omega\setminus C_{\rho,\delta})\le 16\delta.\label{eq:first_summand}
\end{align}

Let us now consider the case $Q \in \Acal$. By definition, for every such cube $Q$ we can find a point contained in $\tau Q\cap C_{\rho,\delta}$, which we call $\bar x_Q$, and which satisfies \eqref{eq:Lebesgue_continuity_p_tau}.
From this, we deduce the estimate
\begin{align} 
\sum_{Q \in \Acal}  \Osc(f,Q)\ell(Q)^{n-1} &= \sum_{Q \in \Acal}  \frac{\Osc(f,Q)\ell(Q)^{n-1}}{|Df|(Q)} |Df|(Q)\nonumber \\
&\le \sum_{Q \in \Acal} P^\tau_f(\bar x_Q, \rho) |Df|(Q)\nonumber \\
&\overset{\eqref{eq:E_rho_delta_definition}}{\le} \sum_{Q \in \Fcal_{\eps}} (p_f^\tau(\bar x_Q)+\delta)|Df|(Q) \nonumber \\
&\overset{\eqref{eq:Lebesgue_continuity_p_tau}}{\le}  \sum_{Q \in \Fcal_{\eps}} \Big(\fint_Q p_f^\tau(y)\,d|Df|(y)+2\delta\Big)|Df|(Q)\nonumber \\
&\le\int_\Omega p_f^\tau(y)\, d|Df|(y)+2\delta |Df|(\Omega).\label{eq:second_summand}  
\end{align}
Plugging the estimates \eqref{eq:first_summand}-\eqref{eq:second_summand} into~\eqref{eq:almost_maximising}, we obtain 
\begin{align}
\Gsf(f,\Omega)\le\Gsf_\eps(f,\Omega)& \leq 16\delta+3\delta |Df|(\Omega)+ \int_\Omega p_f^\tau(y)\, d|Df|(y).\label{eq:estimate_on_G_eps}
\end{align}
As $\delta$ was arbitrary, we deduce 
\[
\Gsf(f,\Omega)\le \int_\Omega p_f^\tau(y)\, d|Df|(y).
\]
Since this holds for every $\Omega$, an application of Radon-Nikodym implies that $g_f(x)\le p_f^\tau(x)$ for $|Df|$-almost every $x$. 
This finishes the proof.
\end{proof}

\begin{remark}[On the need of largely uncentered cubes]\label{rmk:limitation_of_tau_n} The restriction $\tau \ge \tau(n)$ is an essential component in our proof of the upper bound. The argument builds on the existence of a good family of cubes (cf. Section~\ref{sec:good_family}), which requires us to work with large values of $\tau < 1$. For $\tau = 1$, the proof of the upper bound does not rely on the use of good families, and thus the contents of Section~\ref{sec:good_family} are superfluous. In any case, we do not know the optimal value of $\tau(n)$. In principle, the value of $\tau(n)$ could be zero, which would correspond to taking centered cubes. 
\end{remark}

Combining Lemma \ref{lemma:g_ge_p_tau} and Lemma \ref{lemma:g_le_p_tau} we obtain the equality 
\[
g_f(x)=p_f^\tau(x)\quad \text{at $|Df|$-almost every point},
\] 
provided that $\tau\in[\tau(n),1)$, and thus Theorem~\ref{thm:p_tau_equals_g} is proven. We record a very simple consequence of this equality and Remark~\ref{rmk:bounds_g_f}.

\begin{corollary}\label{cor:range_p} Let $\tau\in [\tau(n),1)$, and let $f\in BV_\loc(\Omega)$. Then, 
    \begin{equation}\label{eq:range_p_body}
        \frac 14 \le p_f^\tau 
        \le p_f^1 \le \frac 12\qquad\text{$|Df|$-almost everywhere}.
    \end{equation}
\end{corollary}

\vspace{0.5cm}

\section{Representation in terms of tangents}\label{sec:tangents}

The goal of this section is to characterize $p_f^\tau$ in terms of the uncentered tangents of $f \in BV$, which we introduce next.

\subsection{Introducing oriented cubes} In this section we treat cubes as objects with a specific orientation. An \emph{orientation} of a cube $Q$ is defined as an $n$-tuple $\bbf = \{b_1, \dots, b_n\}$, where each $b_i$ is the center of a face and $\{b_1 - x_Q,\dots,b_n -x_Q\}$ (with $x_Q$ being the cube's center) defines a frame of orthogonal vectors (see Figure~\ref{fig:oriented}). With a slight abuse of notation compared to the previous sections, we will use $Q$ to denote an oriented cube $(Q, \bbf_Q)$. Now, for any oriented cube $Q$ and its orientation $\bbf_Q$, there exists a unique angle-preserving affine map $T_Q: \R^n \to \R^n$ that satisfies two key properties:
\begin{itemize}
    \item \emph{Realizes the cube as the image of the unit cube:} $T(Q_0) = Q$.
    \item \emph{Preserves orientation:} $T
(\frac 12e_i) = b_i$ for each canonical basis vector $e_i$.
\end{itemize}
Furthermore, applying an angle-preserving affine map $T$ to an oriented cube $Q$ with orientation $\{b_1,\dots,b_n\}$ naturally induces a new orientation $\{T b_1,\dots,T b_n\}$ on the transformed cube $T(Q)$.

It is worth noting that using oriented cubes primarily serves the purpose of defining a unique transformation map $T_Q$ for each cube. Crucially, all the relevant quantities we consider below, such as $\Osc(f,Q)$ and $|Df|(Q)$, do not depend on the orientation of $Q$ and are thus well-defined.

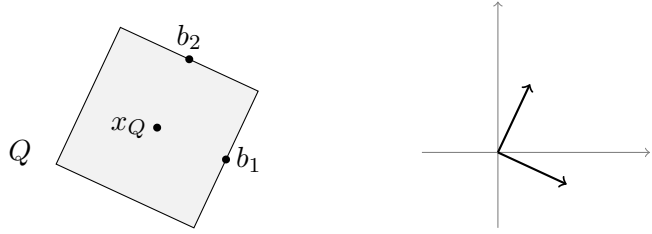
\begin{figure}
\begin{tikzpicture}[scale=2]
\begin{scope}[rotate=65]
		\draw[fill=gray!10] (0,0) -- ++(1,0) -- ++(0,1) -- ++(-1,0) -- cycle;
		\fill (0.5,0) circle (.75pt) node[right] {$b_1$};
		\fill (.5,.5) circle (.75pt) node[left] {$x_Q$};
		\fill (1,0.5) circle (.75pt) node[above] {$b_2$};
	\end{scope}
	\fill (-1,.5) circle (0pt) node[left] {$Q$};

        \draw[->,gray] (1.5,.5) -- (3,.5);
        \draw[->,gray] (2,0) -- (2,1.5);
        
        \begin{scope}[rotate around={-25:(2,0.5)}]
		\draw[->, thick]  (2,.5) -- (2,1);
		\draw[->, thick]  (2,.5) -- (2.5,.5);
        \end{scope}
	\end{tikzpicture}
 \caption{An oriented cube $Q\subset \R^2$. The vectors $b_1-x_Q$ and $b_2-x_Q$ are orthogonal.}\label{fig:oriented}
 \end{figure}
 
\subsection{Uncentered tangents} 
Given $f \in BV_\loc(\Omega)$, and given an (oriented) cube $Q$ such that $|Df|(Q)>0$, let us define the rescaling
$f_Q \colon Q_0 \to \R$ by setting 
\begin{equation}\label{eq:def_rescaled_1D}
    f_Q(y) \coloneqq  \frac{1}{|Df|(Q)} \, \left( f(T_Q y) - \fint_Q f \right)\, \ell(Q)^{n-1}.
\end{equation}
Notice that $f_Q$ has zero average and $|Df_Q|(Q_0)=1$, and
\begin{equation}\label{eq:push_forward}
    Df_Q = \frac{(T_Q^{-1})_\# Df}{|Df|(Q)}
\end{equation}
where $(T_Q^{-1})_\# : \Mcal(Q;\R^n) \to \Mcal(Q_0;\R^n)$ is the usual push-forward of measures (cf.~\cite[Def.~1.70]{AFP}).

\begin{definition}[Blow-up sequences and tangents]\label{def:tangents} Fix $\tau\in [0,1]$ and $f \in BV_\loc(\Omega)$. Let $x\in \spt(|Df|)$ and let $(Q_j)_j$ be a sequence of oriented cubes contained in $\Omega$. A sequence of rescaled functions $(f_{Q_j})_j \subset BV(Q_0)$ is called a \emph{$\tau$-centered blow-up sequence of $f$ at $x$} provided that
\[
\text{$x \in \tau Q_j$ \quad and \quad $\ell(Q_j) \to 0$ as $j \to 0$}\,.
\] 
Any strong $L^1(Q_0)$-limit of a $\tau$-centered blow-up sequence is called a \emph{$\tau$-centered tangent} (or simply \emph{$\tau$-tangent}) of $f$ at $x$. We denote the set of all $\tau$-tangents
of $f$ at $x$ by $\Tan^\tau(f,x)$. If $x\not\in \spt(|Df|)$ then we set $\Tan^\tau(f,x)=\emptyset$.
\end{definition} 

\begin{remark} By standard $BV$-compactness, $\Tan^\tau(f,x)$ is non-empty for all $x \in \spt(|Df|)$. 
    We shall see later by means of Proposition~\ref{prop:equality_c_sup_osc} that, additionally, $\Tan^\tau(f,x) \neq \{0\}$ for $|Df|$-almost every $x$ (cf. Corollary \ref{cor:tangent_non_trivial}). 
\end{remark}

In the following we will reserve the letter $u$ to denote tangents. By the lower semicontinuity of the total variation, every tangent $u\colon Q_0 \to \R$ is a $BV$ function on $Q_0$ with $|Du|(Q_0)\le 1$. It will be convenient to introduce a special subclass of tangents, the set of \emph{probability $\tau$-tangents}
\[
\Tan^\tau_1(f,x):=\{u\in \Tan^\tau(f,x):\, |Du|(Q_0)=1\}\,.
\]

Next, we record a couple of simple observations concerning blow-up sequences and $\tau$-tangents.

\begin{lemma}[Composition rule]\label{lemma:blow_up_composition}
    Let $Q$ be any cube in $\R^n$, and let $S$ be a cube contained in $Q_0$. Let $T_Q:Q_0\to Q$ be the affine map associated with $Q$. Then
    \begin{equation}\label{eq:blow_up_composition}
    (f_Q)_S=f_{T_Q(S)}.
    \end{equation}
\end{lemma}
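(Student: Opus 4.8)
The plan is to verify the composition rule \eqref{eq:blow_up_composition} by a direct computation using the definitions of the rescaling maps and the behavior of the relevant quantities ($\Osc$, $|Df|$, and averages) under affine changes of variables. The key observation is that the affine map realizing $S$ inside $Q_0$, composed with $T_Q$, realizes $T_Q(S)$ inside $Q$, and everything transforms in a controlled way.

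First I would fix notation: let $T_S : Q_0 \to S$ be the orientation-preserving angle-preserving affine map associated to the cube $S \subset Q_0$, and set $R := T_Q(S)$, which is a cube contained in $Q$. By the uniqueness clause defining the transformation map of an oriented cube, and the fact that compositions of angle-preserving affine maps are angle-preserving affine, one checks that $T_Q \circ T_S : Q_0 \to R$ is precisely the map $T_R$ associated to the (induced-orientation) cube $R$; indeed, it sends $Q_0$ onto $R$ and carries the orientation of $Q_0$ to the induced orientation of $R$, so by uniqueness $T_R = T_Q \circ T_S$. Along the way I would record the scaling identities $\ell(S) = \ell(T_S)$-dilation factor, hence $\ell(R) = \ell(S)\,\ell(Q)$, and the Jacobian relations that give $|D(f_Q)|(S) = \ell(S)^{n-1}\Osc$-type quantities; more precisely, from the push-forward formula \eqref{eq:push_forward} and the change-of-variables formula for total variation under similarities, one gets
\[
|D f_Q|(S) = \frac{|Df|(R)}{|Df|(Q)}\,\ell(S)^{1-n}\,\ell(Q)^{1-n}\cdot\ell(Q)^{n-1}\cdots
\]
(the precise bookkeeping of the dilation factors is the routine part), and similarly $\fint_S f_Q = \tfrac{1}{|Df|(Q)}\big(\fint_R f - \fint_Q f\big)\ell(Q)^{n-1}$.

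Then I would simply expand both sides of \eqref{eq:blow_up_composition}. On the left, by definition \eqref{eq:def_rescaled_1D} applied to the function $f_Q$ on the cube $S$,
\[
(f_Q)_S(y) = \frac{1}{|Df_Q|(S)}\Big(f_Q(T_S y) - \fint_S f_Q\Big)\,\ell(S)^{n-1},
\]
and substituting the definition of $f_Q$ and the identity $T_Q\circ T_S = T_R$ turns $f_Q(T_S y)$ into $\tfrac{1}{|Df|(Q)}(f(T_R y) - \fint_Q f)\ell(Q)^{n-1}$; the $\fint_Q f$ terms cancel against the corresponding piece of $\fint_S f_Q$, and the remaining normalizing constants $|Df_Q|(S)$, $\ell(S)^{n-1}$, $\ell(Q)^{n-1}$, $|Df|(Q)$ collapse—using the scaling identities above—to exactly $\tfrac{1}{|Df|(R)}\ell(R)^{n-1}$. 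This yields $(f_Q)_S(y) = \tfrac{1}{|Df|(R)}(f(T_R y) - \fint_R f)\,\ell(R)^{n-1} = f_R(y) = f_{T_Q(S)}(y)$, which is the claim.

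The main obstacle, such as it is, is purely bookkeeping: keeping track of all the powers of the two dilation factors $\ell(Q)$ and $\ell(S)$ and making sure the push-forward formula \eqref{eq:push_forward} is applied correctly when the inner domain $S$ is itself rescaled (one must be careful that $|Df_Q|(S)$, computed via \eqref{eq:push_forward}, equals the expected $\tfrac{|Df|(R)}{|Df|(Q)}\ell(Q)^{n-1}\ell(S)^{1-n}$—equivalently that $|Df_Q|(S)\,\ell(S)^{n-1}$ is the right normalization so that $f_R$ has unit total variation). Since all maps involved are similarities (angle-preserving affine maps with a single scaling ratio), there is no genuine difficulty—no subtle measure-theoretic point arises—so the proof is a short verification once $T_Q \circ T_S = T_{T_Q(S)}$ is established.
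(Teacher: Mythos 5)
Your argument is correct and shares the paper's key step: establishing $T_{T_Q(S)}=T_Q\circ T_S$ by checking the two maps agree on $Q_0$ and on the orienting basis, which you do in the same way. After that the two proofs diverge slightly. The paper avoids all scaling bookkeeping by noting that, once $T_Q\circ T_S=T_{T_Q(S)}$ is known, both $(f_Q)_S$ and $f_{T_Q(S)}$ are positive affine transformations of $f\circ T_{T_Q(S)}$ on $Q_0$ with zero average and unit total variation, and these normalizations determine the function uniquely. You instead expand the definitions directly and collapse the constants using $\fint_S f_Q=\tfrac{\ell(Q)^{n-1}}{|Df|(Q)}\big(\fint_{T_Q(S)}f-\fint_Q f\big)$, $|Df_Q|(S)=|Df|(T_Q(S))/|Df|(Q)$, and $\ell(T_Q(S))=\ell(Q)\ell(S)$; this is valid and lands on the claim. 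One small caveat: the half-written display you give for $|Df_Q|(S)$ carries stray factors of $\ell(S)^{1-n}$ and $\ell(Q)^{\pm(n-1)}$—the push-forward identity \eqref{eq:push_forward} already yields $|Df_Q|(S)=|Df|(T_Q(S))/|Df|(Q)$ with no further scale corrections—but since you leave the bookkeeping unfinished with an ellipsis and your final equality is right, this is a slip of presentation rather than a gap in the argument.
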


\begin{proof}
    If $T_S:Q_0\to S$ is the affine map associated with $S$, then $T_{T_Q(S)}=T_Q \circ T_S$. Indeed, by definition
    \[
    T_Q\circ T_S (Q_0)=T_Q(S)=T_{T_Q(S)}(Q_0).
    \]
    Moreover, let $\bbf_Q=\{q_1,\ldots,q_n\}$ and $\bbf_S=\{s_1,\ldots,s_n\}$ be the orientations of $Q$ and $S$. Then
    \[
    T_Q\circ T_S (\tfrac12e_i)=T_Q(s_i)=T_{T_Q(S)}(\tfrac12 e_i).
    \]
    It follows that the maps $T_{T_Q(S)}$ and $T_Q\circ T_S$ coincide on the basis $\{e_i\}$, so they must coincide. Therefore,  $(f_Q)_S$ and $f_{T_Q(S)}$ have the same affine rescaling in the inner variable, have average zero and total variation one. This uniquely determines the function, so they must coincide.
\end{proof}

\begin{lemma}[Tangents are compact]\label{lemma:tangents_are_compact}
    Fix $\tau\in[0,1]$ and let $f\in BV_\loc(\Omega)$. Then, for every $x\in\Omega$, the set $\Tan^\tau(f,x)$ is compact (and thus closed) in the $L^1(Q_0)$ topology.
\end{lemma}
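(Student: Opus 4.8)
The plan is to show that $\Tan^\tau(f,x)$ is both closed and (sequentially) compact in $L^1(Q_0)$, exploiting that every element satisfies a uniform $BV$-bound. First I would observe that by the lower semicontinuity of the total variation, any $u \in \Tan^\tau(f,x)$ satisfies $|Du|(Q_0) \le 1$ and $\fint_{Q_0} u = 0$, hence by the Poincar\'e--Wirtinger inequality the family $\Tan^\tau(f,x)$ is bounded in $BV(Q_0)$. By the compact embedding $BV(Q_0) \hookrightarrow\hookrightarrow L^1(Q_0)$, this family is precompact in $L^1(Q_0)$; so the only substantive point is to prove that $\Tan^\tau(f,x)$ is \emph{closed}, i.e.\ that an $L^1$-limit of $\tau$-tangents is again a $\tau$-tangent.

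To prove closedness, I would take a sequence $(u_k)_k \subset \Tan^\tau(f,x)$ with $u_k \to u$ in $L^1(Q_0)$ and exhibit $u$ as the limit of a single $\tau$-centered blow-up sequence via a diagonal argument. For each $k$, by definition there is a $\tau$-centered blow-up sequence $(f_{Q^k_j})_j$ with $f_{Q^k_j} \to u_k$ in $L^1(Q_0)$ as $j \to \infty$; in particular, for each $k$ one can pick $j(k)$ large enough that the cube $R_k := Q^k_{j(k)}$ satisfies $x \in \tau R_k$, $\ell(R_k) \le 1/k$, and $\|f_{R_k} - u_k\|_{L^1(Q_0)} \le 1/k$. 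Then $\|f_{R_k} - u\|_{L^1(Q_0)} \le 1/k + \|u_k - u\|_{L^1(Q_0)} \to 0$, so $(f_{R_k})_k$ is a $\tau$-centered blow-up sequence of $f$ at $x$ converging to $u$ in $L^1(Q_0)$. Hence $u \in \Tan^\tau(f,x)$, proving closedness. Combined with precompactness, this gives that $\Tan^\tau(f,x)$ is compact; and since compact subsets of the metric space $L^1(Q_0)$ are closed, the parenthetical claim follows as well.

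The main obstacle — really a minor bookkeeping point rather than a genuine difficulty — is making sure the diagonal cubes $R_k$ remain \emph{admissible} blow-up cubes, namely that $|Df|(R_k) > 0$ (so that $f_{R_k}$ is well-defined) and that the orientation data is handled consistently. The first is automatic since $f_{Q^k_j}$ is defined only when $|Df|(Q^k_j)>0$, so along the original sequences $|Df|(R_k)>0$; the second is a non-issue because, as remarked after the definition of oriented cubes, $\Osc(f,Q)$, $|Df|(Q)$ and hence $f_Q$ up to the action of $T_Q$ carry all the needed information, and the blow-up is taken over \emph{all} orientations. One should also note the boundary case: if $x \notin \spt(|Df|)$ then $\Tan^\tau(f,x) = \emptyset$, which is trivially compact, so there is nothing to prove; the argument above applies to $x \in \spt(|Df|)$, where $BV$-compactness already guarantees $\Tan^\tau(f,x) \neq \emptyset$.
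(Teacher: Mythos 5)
Your proof is correct and takes essentially the same approach as the paper: a diagonal argument over blow-up cubes, combined with $BV$-compactness. The paper phrases it as a direct sequential-compactness argument (extract a convergent subsequence of tangents, then diagonalize to realize the limit as a tangent), whereas you split it into precompactness plus closedness, but the substance — including the diagonal selection of cubes $R_k$ with $x\in\tau R_k$, $\ell(R_k)\to 0$, and $\|f_{R_k}-u\|_{L^1}\to 0$ — is identical.
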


\begin{proof}
    We can clearly assume that $x\in\spt(|Df|)$. Take a sequence $u_j\in\Tan^\tau(f,x)$. By definition, $|Du_j|(Q_0)\le 1$ and $\fint_{Q_0} u_j=0$. By compactness in $BV$, up to subsequence $u_j $ converge in $L^1(Q_0)$ to some $v\in BV(Q_0)$ with $|Dv|\le 1$. For every $j$, let $(Q_j^i)_{i\in\mathbb{N}}$ be a sequence of cubes with $x\in \tau Q_j^i$ such that
    \[
    f_{Q_j^i}\to u_j\quad \text{in $L^1(Q_0)$ as $i\to\infty$.}
    \]
    Then, by a diagonal argument we can find a sequence $Q_j^{i(j)}$ such that
    \[
    f_{Q_j^{i(j)}}\to v\quad\text{in $L^1(Q_0)$ as $j\to\infty$.}
    \]
    This shows that $v\in \Tan^\tau(f,x)$ and concludes the proof.
\end{proof}

\subsection{Local Poincar\'e constants and tangents}
Let us now turn to the main result of this section, which states that $p_f^\tau(x)$ can be recovered by maximizing the oscillation of (probability) $\tau$-tangents at $x$. First, we define the quantities
\begin{align*}
    m^\tau(x)&:=\sup\left\{\frac{\Osc(u,Q_0)}{|Du|(Q_0)}:\, u\in \Tan^\tau(f,x),\, |Du|(Q_0)>0\right\}\\
    m^\tau_1(x)&:=\sup\,\{\Osc(u,Q_0):\,u\in \Tan_1^\tau(f,x)\}.
\end{align*}
We convene that the supremum over the emptyset is zero.

\begin{lemma}\label{lemma:m_m1_attained} Let $\tau \in [0,1)$ and let $f\in BV_\loc(\Omega)$. For every $x\in \Omega$ the following holds:
\begin{enumerate}[(i)]
    \item If $u\in \Tan^\tau(f,x)$ is not the zero function, then $|Du|(Q_0)^{-1}u$ also belongs to $\Tan^\tau(f,x)$, and hence to $\Tan^\tau_1(f,x)$. 
    \item $m^\tau(x)=m^\tau_1(x)$. Moreover, if $\Tan^\tau(f,x)\neq \{0\}$ then both suprema are attained. In addition, they also coincide with
    \[
    \sup\{\Osc(u,Q_0):\, u\in \Tan^\tau(f,x)\}.
    \]    
\end{enumerate}
\end{lemma}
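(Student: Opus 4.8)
\textbf{Proof plan for Lemma~\ref{lemma:m_m1_attained}.}

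The plan is to prove (i) by a careful rescaling argument and then deduce (ii) from (i) combined with the compactness of $\Tan^\tau(f,x)$ (Lemma~\ref{lemma:tangents_are_compact}) and the lower semicontinuity of the total variation. For part (i), suppose $u\in\Tan^\tau(f,x)$ is not the zero function, so $c:=|Du|(Q_0)\in(0,1]$. Let $(Q_j)_j$ be a blow-up sequence realizing $u$, i.e.\ $x\in\tau Q_j$, $\ell(Q_j)\to 0$, and $f_{Q_j}\to u$ in $L^1(Q_0)$. I would like to say that $f_{Q_j}/c\to u/c$; this is immediate in $L^1$, but $f_{Q_j}/c$ is not of the form $f_{Q'}$ for a cube $Q'$, so it is not literally a blow-up sequence. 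The key observation is instead a diagonal/sub-cube argument: since $|Df_{Q_j}|(Q_0)=1$ for every $j$ while $|Du|(Q_0)=c<1$ (when $c<1$), mass is escaping, but by choosing inner cubes $S_j\subset Q_0$ shrinking appropriately towards a point where $u$ carries most of its mass — more precisely, using that $|Df_{Q_j}|(Q_0)=1$ and that one can select $S_j$ with $\ell(S_j)\to 1$ slowly enough that $|Df_{Q_j}|(S_j)\to 1$ and $(f_{Q_j})_{S_j}\to$ (a rescaling of $u$) — one can renormalize. A cleaner route: because $x\in\tau Q_j$ and $\tau<1$, the contracted cube $\tau Q_j$ still contains $x$, and one can replace $Q_j$ by a slightly larger concentric cube $Q_j'$ with $x\in\tau Q_j'$ (this uses $\tau<1$ crucially) chosen so that $|Df|(Q_j')/|Df|(Q_j)\to 1/c$; then by the composition rule (Lemma~\ref{lemma:blow_up_composition}) $f_{Q_j'}=(f_{Q_j})_{S_j}$ for $S_j=T_{Q_j}^{-1}(Q_j')\cap Q_0$, and one checks $f_{Q_j'}\to u/c$ in $L^1(Q_0)$. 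Hence $u/c\in\Tan^\tau(f,x)$, and since $|D(u/c)|(Q_0)=1$ it lies in $\Tan_1^\tau(f,x)$.

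For part (ii), the inequality $m_1^\tau(x)\le m^\tau(x)$ is trivial since $\Tan_1^\tau(f,x)\subset\{u\in\Tan^\tau(f,x):|Du|(Q_0)>0\}$ and on probability tangents $\Osc(u,Q_0)/|Du|(Q_0)=\Osc(u,Q_0)$. For the reverse inequality, given any $u\in\Tan^\tau(f,x)$ with $|Du|(Q_0)=c>0$, part (i) gives $u/c\in\Tan_1^\tau(f,x)$, and $\Osc(u/c,Q_0)=\Osc(u,Q_0)/c$, so $\Osc(u,Q_0)/|Du|(Q_0)\le m_1^\tau(x)$; taking the supremum yields $m^\tau(x)\le m_1^\tau(x)$. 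For attainment, assume $\Tan^\tau(f,x)\neq\{0\}$, so $m_1^\tau(x)$ is a supremum over a non-empty set. Take a maximizing sequence $u_k\in\Tan_1^\tau(f,x)$ with $\Osc(u_k,Q_0)\to m_1^\tau(x)$. By Lemma~\ref{lemma:tangents_are_compact}, up to a subsequence $u_k\to u_\infty$ in $L^1(Q_0)$ with $u_\infty\in\Tan^\tau(f,x)$. The functional $\Osc(\cdot,Q_0)$ is continuous under $L^1(Q_0)$ convergence (it is $\fint_{Q_0}|v-\fint_{Q_0}v|$, a Lipschitz function of $v$ in $L^1$), so $\Osc(u_\infty,Q_0)=m_1^\tau(x)$. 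It remains to check $u_\infty\neq 0$: since each $\Osc(u_k,Q_0)$ is bounded below — indeed $m_1^\tau(x)>0$ because on the non-trivial set $\Tan^\tau(f,x)\neq\{0\}$ one has a probability tangent which, being non-constant with total variation one, satisfies $\Osc>0$ — we get $\Osc(u_\infty,Q_0)=m_1^\tau(x)>0$, hence $u_\infty\neq 0$, hence by (i) $u_\infty/|Du_\infty|(Q_0)\in\Tan_1^\tau(f,x)$ attains $m_1^\tau(x)$ (note $|Du_\infty|(Q_0)\le 1$ by lower semicontinuity, so dividing only increases $\Osc$, which combined with $\le m_1^\tau(x)$ forces $|Du_\infty|(Q_0)=1$ and $u_\infty$ itself attains it). Finally, the identity with $\sup\{\Osc(u,Q_0):u\in\Tan^\tau(f,x)\}$: this supremum is $\ge m_1^\tau(x)$ trivially, and $\le m^\tau(x)$ since for $u$ with $|Du|(Q_0)\le 1$ we have $\Osc(u,Q_0)\le \Osc(u,Q_0)/|Du|(Q_0)$ when $|Du|(Q_0)>0$ (and $\Osc(u,Q_0)=0$ when $|Du|(Q_0)=0$ as then $u$ is constant, hence zero by the zero-average normalization); so all three quantities agree.

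\textbf{Main obstacle.} The delicate point is part (i): producing an honest blow-up sequence converging to the renormalization $u/c$. One cannot simply divide by $c$, since that leaves the class of cube-rescalings $f_{Q'}$. The fix must exploit $\tau<1$ to have room to enlarge (or otherwise perturb) the cubes $Q_j$ while keeping $x\in\tau Q_j'$, and then invoke the composition rule to identify the new rescaled functions as sub-cube rescalings of the old ones, controlling the $L^1$ error via the strong convergence $f_{Q_j}\to u$ and the continuity of the relevant averages. Getting the cube sizes to match the ratio $1/c$ in the limit, uniformly enough that $f_{Q_j'}\to u/c$ strongly in $L^1(Q_0)$, is the technical heart of the argument; everything in part (ii) is then soft (compactness + continuity of $\Osc$ + positivity).
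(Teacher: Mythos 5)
Your treatment of part (ii) is essentially sound (and in one spot a touch cleaner than the paper's, which invokes Lemma~\ref{lemma:almost_constant} to rule out $v=0$ whereas you get it directly from the $L^1$-continuity of $\Osc(\cdot,Q_0)$ and positivity of the supremum). The problem is part (i), on which everything else hinges.

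Your ``cleaner route'' --- replacing $Q_j$ by a \emph{larger} concentric cube $Q_j'$ so that $|Df|(Q_j')/|Df|(Q_j)\to 1/c$ --- does not work. First, the composition rule (Lemma~\ref{lemma:blow_up_composition}) applies only to \emph{sub}cubes $S\subset Q_0$; for $Q_j'\supset Q_j$ one has $T_{Q_j}^{-1}(Q_j')\supset Q_0$, so $S_j:=T_{Q_j}^{-1}(Q_j')\cap Q_0=Q_0$ and the identity $f_{Q_j'}=(f_{Q_j})_{S_j}$ collapses to $f_{Q_j'}=f_{Q_j}$, which is false. Second, and more fundamentally, nothing controls $f$ \emph{outside} $Q_j$: if, say, $f$ is constant on a neighborhood of $Q_j$ beyond $\partial Q_j$, no enlargement will raise the ratio $|Df|(Q_j')/|Df|(Q_j)$ above $1$, and even when the ratio can be tuned, $f_{Q_j'}$ restricted to $Q_0\setminus T_{Q_j'}^{-1}(Q_j)$ has no relation to $u$. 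The mechanism you need is the opposite: since $|Df_{Q_j}|(Q_0)=1$ while $|Du|(Q_0)=c<1$, the deficit has escaped to $\partial Q_0$, so one must \emph{shrink} $Q_0$ to discard that escaping mass. Your abandoned sketch does gesture at shrinking, but the key claim there is wrong: for inner cubes $S_j$ with $\ell(S_j)\to 1$ one gets (along a diagonal) $|Df_{Q_j}|(S_j)\to c$, \emph{not} $\to 1$ --- and it is precisely that limit $c$ in the denominator of $(f_{Q_j})_{S_j}$ which produces the desired normalization $u/c$.

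The paper's proof makes the shrinking precise: set $y_j:=T_{Q_j}^{-1}(x)\in\tau Q_0$, pass to a subsequence with $y_j\to y_\infty\in\tau\overline{Q_0}$, and contract $Q_0$ \emph{towards $y_\infty$} via $D_\eta(y)=y_\infty+\eta(y-y_\infty)$ with $\eta\uparrow 1$ chosen so that $|Du|(\partial Q_0^\eta)=0$. This choice of center guarantees $x\in\tau Q_j^\eta:=\tau T_{Q_j}(Q_0^\eta)$ for $j$ large (here $\tau<1$ is what gives the slack), so $(Q_j^\eta)_j$ remains admissible; the composition rule applies since $Q_0^\eta\subset Q_0$; and the normalization $|Df_{Q_j}|(Q_0^\eta)\to|Du|(Q_0^\eta)\to c$ as $\eta\uparrow1$ yields $u/c$ after a diagonal argument. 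Centering the contraction at $y_\infty$ is the point your sketch misses --- shrinking towards ``a point where $u$ carries most of its mass'' would in general expel $x$ from $\tau Q_j^\eta$.
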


\begin{proof}
    $(i)$ Let $Q_j$ be such that $x\in\tau Q_j$, $\ell(Q_j)\to 0$ and $f_{Q_j}\to u$ in $L^1(Q_0)$. 
    Define the points $y_j:=T_{Q_j}^{-1}(x)\in \tau Q_0$. Up to a (not relabeled) subsequence, we can assume that $y_j\to y_\infty\in \tau \overline{Q}_0$. For $\eta>0$, we also define the similarity
    \[
    D_\eta(y)\coloneqq y_\infty+\eta(y-y_\infty).
    \]
    For the sake of simplicity we write $Q_0^\eta\coloneqq D_\eta(Q_0)$ and $Q_j^\eta\coloneqq T_{Q_j}(Q_0^\eta)\subseteq Q_j$. Observe that for sufficiently large $j$ we still have $y_j\in \tau Q_0^\eta$, or equivalently, $x\in \tau Q_j^\eta$. Therefore $(Q_j^\eta)_j$ is an admissible sequence for $\Tan^\tau(f,x)$.

    For all $\delta>0$, we can always choose $\eta$ such that $1-\delta<\eta<1$ and $|Du|(\partial Q_0^\eta)=0$. In particular,
    \[
    |Du|(Q_0^\eta)=\lim_{j\to\infty}|Df_{Q_j}|(Q_0^\eta).
    \]
    By definition and the composition rule \eqref{eq:blow_up_composition},
    \[
    f_{Q_j^\eta}=(f_{Q_j})_{Q_0^\eta}=\frac{1}{|Df_{Q)_j}|(Q_0^\eta)}\left(f_{Q_j}\circ D_\eta-\fint_{Q_0^\eta} f_{Q_j}\right) \ell(Q_0^\eta)^{n-1}.
    \]
    Moreover, this sequence $L^1$-converges to 
    \[
    \frac{1}{|Du|(Q_0^\eta)}\left(u\circ D_\eta-\fint_{Q_0^\eta} u\right)\eta^{n-1}.
    \]
    Observe that, as $\eta\to 1$, this family $L^1$-converges to $|Du|(Q_0)^{-1} u$. Now if we fix a sequence $\eta_j\to 1$, we can extract a diagonal subsequence so that $f_{Q_j^{\eta_j}}$ converge to $|Du|(Q_0)^{-1} u$. This concludes the proof of $(i)$.

    $(ii)$ Since $\Tan^\tau(f,x)\neq\{0\}$ then $m^\tau(x)>0$. Indeed, the oscillation is zero if and only if the function is constant, and thus identically zero (because every tangent has zero average). Let now $u_j\in\Tan^\tau(f,x)$ be a maximizing sequence, i.e.,
    \[
    \frac{\Osc(u_j,Q_0)}{|Du_j|(Q_0)}\to m^\tau(x)>0.
    \]
    Then by the previous point, each $\tilde u_j:=|Du_j|^{-1}(Q_0)u_j$ belongs to $\Tan^\tau_1(f,x)$. By Lemma \ref{lemma:tangents_are_compact}, up to a (not relabeled) subsequence, $\tilde u_j\to v$ for some $v\in \Tan^\tau(f,x)$. First, we will show that $v\ne 0$. By Lemma \ref{lemma:almost_constant}, there exist constants $\eta\in (0,1)$ and $\delta>0$ such that $|D\tilde u_j|(\eta Q_0)\ge \delta|D\tilde u_j|(Q_0)=\delta$ for every $j$. This implies that $|Dv|(\overline{\eta Q_0})\ge \limsup_j |D\tilde u_j|(\overline{\eta Q_0})\ge \delta$. Now we use the continuity of the oscillation and the lower semicontinuity of the total variation on open sets to infer that
    \[
    \frac{\Osc(v,Q_0)}{|Dv|(Q_0)}\ge\limsup_j \frac{\Osc(\tilde u_j,Q_0)}{|D\tilde u_j|(Q_0)}=m^\tau(x).
    \]
    This shows that $v$ is a non-zero maximizer in $\Tan^\tau(f,x)$, with $|Dv|(Q_0)$; for otherwise there would be a strict inequality above, contradicting that $m^\tau(x)$ is the supremum. In particular, $v$ is also a maximizer in $\Tan^\tau_1(f,x)$. Since $\Tan^\tau_1(f,x)\subseteq \Tan^\tau(f,x)$, we conclude that $m^\tau(x)=m_1^\tau(x)$. Finally, the last claim in (ii) follows from the inequalities
    \[
    m_1^\tau(x)\le \sup\{\Osc(u,Q_0):\, u\in \Tan^\tau(f,x)\}\le m^\tau(x).
    \]
    This finishes the proof.
\end{proof}

We can now prove Theorem \ref{thm:p_tau_equal_osc} by means of the following result: 

\begin{proposition}\label{prop:equality_c_sup_osc}
Let $\tau \in [0,1)$ and let $f\in BV_\loc(\Omega)$. Then, for all $x \in \Omega$ it holds
    \begin{equation}\label{eq:equality_p_m_m1}
    p_f^\tau(x)=m^\tau(x)=m^\tau_1(x). 
    \end{equation}
\end{proposition}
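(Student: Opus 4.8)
The plan is to prove the two inequalities $p_f^\tau(x) \le m^\tau(x)$ and $p_f^\tau(x) \ge m^\tau(x)$ separately, since by Lemma~\ref{lemma:m_m1_attained}(ii) we already know $m^\tau(x) = m^\tau_1(x)$, so only the identification with $p_f^\tau(x)$ requires work. Throughout I would fix $x \in \spt(|Df|)$ (the case $x \notin \spt(|Df|)$ being trivial, since both sides are zero, the latter by the convention that $\Tan^\tau(f,x) = \emptyset$ or, if nonzero tangents fail to exist, that the relevant suprema vanish). The basic dictionary here is that for any oriented cube $Q$ with $|Df|(Q) > 0$, the rescaled function $f_Q$ satisfies $\Osc(f_Q, Q_0) = \dfrac{\ell(Q)^{n-1}\,\Osc(f,Q)}{|Df|(Q)} = P_f(Q)$ and $|Df_Q|(Q_0) = 1$ — this is immediate from the definition~\eqref{eq:def_rescaled_1D} together with~\eqref{eq:push_forward} and the fact that $T_Q$ is angle-preserving, so $\Osc$ and $|D\,\cdot\,|$ transform by the stated scalar factors. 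Hence $P_f(Q) = \Osc(f_Q, Q_0)/|Df_Q|(Q_0)$, which is exactly the type of quotient appearing in the definition of $m^\tau(x)$.

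For the inequality $p_f^\tau(x) \le m^\tau(x)$: by definition of $p_f^\tau(x) = \lim_{\eps\to 0} P_f^\tau(x,\eps)$ and of $P_f^\tau(x,\eps)$ as a supremum over cubes $Q$ with $x \in \tau Q$, $\ell(Q)\le\eps$, I can pick for each $j$ a cube $Q_j$ with $x \in \tau Q_j$, $\ell(Q_j) \le 1/j$, and $P_f(Q_j) \ge P_f^\tau(x, 1/j) - 1/j \ge p_f^\tau(x) - 2/j$ for $j$ large. Then $(f_{Q_j})_j$ is a $\tau$-centered blow-up sequence; by $BV$-compactness (the functions have zero average and total variation $1$ on $Q_0$) a subsequence converges in $L^1(Q_0)$ to some $u \in \Tan^\tau(f,x)$. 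By continuity of $\Osc(\cdot, Q_0)$ in $L^1$ and lower semicontinuity of the total variation, $\Osc(u,Q_0) = \lim_j \Osc(f_{Q_j}, Q_0) \ge p_f^\tau(x)$ and $|Du|(Q_0) \le 1$. If $|Du|(Q_0) > 0$ then $u$ is an admissible competitor in the definition of $m^\tau(x)$, giving $m^\tau(x) \ge \Osc(u,Q_0)/|Du|(Q_0) \ge p_f^\tau(x)$. The case $|Du|(Q_0) = 0$ needs a short argument: then $u$ is constant, hence $u \equiv 0$ (zero average), hence $\Osc(u,Q_0) = 0$, forcing $p_f^\tau(x) = 0$, and the inequality holds trivially. (Here one uses $p_f^\tau(x) \le \tfrac12$, so $p_f^\tau(x) = 0$ is the only possibility compatible with $\Osc(u,Q_0)=0$.)

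For the reverse inequality $p_f^\tau(x) \ge m^\tau(x) = m^\tau_1(x)$: take any $u \in \Tan^\tau_1(f,x)$, so there is a sequence of oriented cubes $Q_j$ with $x \in \tau Q_j$, $\ell(Q_j)\to 0$, and $f_{Q_j} \to u$ in $L^1(Q_0)$ with $|Du|(Q_0) = 1$. For each fixed $j$ large enough that $\ell(Q_j) \le \eps$, the cube $Q_j$ is admissible in the supremum defining $P_f^\tau(x, \eps)$, so $P_f^\tau(x,\eps) \ge P_f(Q_j) = \Osc(f_{Q_j}, Q_0)$. Letting $j \to \infty$ and using continuity of $\Osc(\cdot, Q_0)$ gives $P_f^\tau(x, \eps) \ge \Osc(u, Q_0)$ for every $\eps > 0$, hence $p_f^\tau(x) \ge \Osc(u, Q_0)$. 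Taking the supremum over $u \in \Tan^\tau_1(f,x)$ yields $p_f^\tau(x) \ge m^\tau_1(x) = m^\tau(x)$. Combining the two inequalities with Lemma~\ref{lemma:m_m1_attained}(ii) closes the proof. I expect the main subtlety to be the bookkeeping around possibly-degenerate limits (tangents with $|Du|(Q_0) < 1$ or $= 0$): one must make sure the suprema $m^\tau$ and $m^\tau_1$ are genuinely comparable to $p_f^\tau$ even when the blow-up loses mass to the boundary of $Q_0$, which is precisely what Lemma~\ref{lemma:m_m1_attained} — via the rescaling trick that reinflates a tangent to unit total variation — is designed to handle; invoking it lets the whole argument go through without having to re-prove compactness of the tangent set or the non-degeneracy estimate here.
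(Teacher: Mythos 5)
Your proposal is correct and follows essentially the same two-inequality strategy as the paper's proof: in one direction you extract a near-optimal blow-up sequence and use $BV$-compactness plus $L^1$-continuity of $\Osc(\cdot,Q_0)$ to produce a competitor tangent; in the other you test $P_f^\tau(x,\eps)$ against the generating cubes of any probability tangent, and you close by invoking Lemma~\ref{lemma:m_m1_attained}(ii). The only cosmetic difference is that you treat the degenerate case $|Du|(Q_0)=0$ explicitly, while the paper simply assumes $p_f^\tau(x)>0$ at the outset of Step~1, which amounts to the same thing.
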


\begin{proof} We divide the proof into steps consisting of different inequalities.

    \emph{Step 1.} $m^\tau(x)\ge p_f^\tau(x)$. 

    We may assume that $p_f^\tau(x)>0$. Let $Q_j$ be a sequence of cubes such that $x\in \tau Q_j$ and 
    \[
    \Osc(f_{Q_j},Q_0)=\frac{\Osc(f,Q_j)}{|Df|(Q_j)}\ell(Q_j)^{n-1}\to p_f^\tau(x),\qquad\text{as $j\to\infty$.}
    \]
    Since $|Df_{Q_j}|(Q_0)=1$, $f_{Q_j}$ is a pre-compact sequence in $L^1(Q_0)$ and converges (up to not relabeled subsequences) to some $u\in BV(Q_0)$ with $|Du|(Q_0)\le 1$. We claim that $|Du|(Q_0)>0$. Indeed 
    \[
    \Osc(u,Q_0)=\lim_{j\to\infty} \Osc(f_{Q_j},Q_0)=p_f^\tau(x)>0.
    \]
    Therefore, $u$ cannot be constant and hence $|Du|(Q_0)>0$. This proves that $\Tan^\tau(f,x)\ne \{0\}$. Moreover, this also shows that
    \[
    \frac{\Osc(u,Q_0)}{|Du|(Q_0)}\ge \Osc(u,Q_0)\ge p_f^\tau(x),
    \]
    whence $m^\tau(x)\ge p_f^\tau(x)$.

    \emph{Step 2.} $ p_f^\tau(x)\ge m^\tau_1(x)$.

    As before, we may assume that $m^\tau_1(x)>0$, and thus that $\Tan_1^\tau(f,x)\ne\emptyset$. Take any $v\in \Tan_1^\tau(f,x)$, and a generating blow-up sequence sequence $f_{Q_j}\to v$. Then
    \begin{align*}
    \Osc(v,Q_0) & =\lim_j \Osc(f_{Q_j},Q_0)\\
    & =\lim_j\frac{\Osc(f,Q_j)}{|Df|(Q_j)}\ell(Q_j)^{n-1}\\
    & \le \lim_j P^\tau(x,\ell(Q_j))= p_f^\tau(x).
    \end{align*}
    Taking the supremum among all $v\in\Tan_1^\tau(f,x)$ we obtain the sought assertion. 
    
    \emph{Step 3. Conclusion.} By Lemma \ref{lemma:m_m1_attained} and by the previous points, we have $p_f^\tau(x)\le m^\tau(x)= m^\tau_1(x)\le p_f^\tau(x)$, and therefore all inequalities are equalities. This completes the proof.
\end{proof}

From the assertion of the previous proposition and Theorem \ref{thm:p_tau_equals_g} we deduce the following: 

\begin{corollary}\label{cor:tangent_non_trivial}
    Let $\tau\in[0,1)$ and $f\in BV_\loc(\Omega)$. Then, for $|Df|$-almost every $x\in\Omega$, the tangent set $\Tan^\tau(f,x)$ contains some non-zero function.
\end{corollary}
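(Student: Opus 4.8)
The statement to be proved is Corollary~\ref{cor:tangent_non_trivial}: for $\tau\in[0,1)$ and $f\in BV_\loc(\Omega)$, the set $\Tan^\tau(f,x)$ contains a non-zero function for $|Df|$-almost every $x$. The natural route is to combine the two main tools already available: the integral representation from Theorem~\ref{thm:p_tau_equals_g} (equivalently, $g_f = p_f^\tau$ for $\tau\in[\tau(n),1)$) and the tangential characterization $p_f^\tau(x)=m^\tau(x)=m_1^\tau(x)$ from Proposition~\ref{prop:equality_c_sup_osc}. The key point is that if $\Tan^\tau(f,x)=\{0\}$, then $m^\tau(x)=0$ by the convention that the supremum over the empty set is zero, hence $p_f^\tau(x)=0$; so it suffices to show that $p_f^\tau(x)>0$ for $|Df|$-a.e.\ $x$.

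\textbf{Main argument.} First I would handle the range $\tau\in[\tau(n),1)$: here Theorem~\ref{thm:p_tau_equals_g} (or the combination of Lemma~\ref{lemma:g_ge_p_tau} and Lemma~\ref{lemma:g_le_p_tau}) gives $p_f^\tau(x)=g_f(x)$ for $|Df|$-a.e.\ $x$, and Remark~\ref{rmk:bounds_g_f} gives $g_f(x)\ge \tfrac14$ at $|Df|$-a.e.\ $x$. Hence $p_f^\tau(x)\ge\tfrac14>0$ $|Df|$-a.e., and Proposition~\ref{prop:equality_c_sup_osc} then forces $m^\tau(x)=p_f^\tau(x)\ge\tfrac14>0$, which by definition of $m^\tau$ means $\Tan^\tau(f,x)$ contains a function $u$ with $|Du|(Q_0)>0$, in particular a non-zero one. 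Next I would extend this to \emph{all} $\tau\in[0,1)$ using the monotonicity observation recorded in the introduction: $p_f^\eta\le p_f^\tau$ whenever $\eta\le\tau$. Pick any fixed $\tau_0\in[\tau(n),1)$; for $\tau\le\tau_0$ we only know $p_f^\tau\le p_f^{\tau_0}$, which is the wrong direction. So instead I would argue directly: if $x\in\spt(|Df|)$, then by Lemma~\ref{lemma:g_ge_p_tau} applied with the chosen $\tau$ we have $g_f(x)\ge p_f^\tau(x)$, which again is the wrong direction. The clean fix is to use a \emph{smaller} admissible parameter in the opposite comparison — but that too only decreases $p_f^\tau$.

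\textbf{The real mechanism (and the main obstacle).} The resolution is that for \emph{small} $\tau$ the inequality $p_f^\tau\ge\tfrac14$ need not be inherited from $g_f$, so one must produce a non-zero tangent by hand. The cleanest path: fix $\tau\in[0,1)$ and a point $x$ that is a $|Df|$-Lebesgue point with $g_f(x)=p_f^{\tau_0}(x)\ge\tfrac14$ for some $\tau_0\in[\tau(n),1)$ (a full-measure set). By Proposition~\ref{prop:equality_c_sup_osc} there is $u\in\Tan^{\tau_0}(f,x)$ with $\Osc(u,Q_0)\ge\tfrac14$, arising from cubes $Q_j$ with $x\in\tau_0 Q_j$, $\ell(Q_j)\to0$, $f_{Q_j}\to u$. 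Now, following the rescaling device in the proof of Lemma~\ref{lemma:m_m1_attained}(i), replace each $Q_j$ by a concentrically shrunk-and-reanchored cube $\widetilde Q_j$ chosen so that $x\in\tau\widetilde Q_j$ and $\ell(\widetilde Q_j)\to0$; the composition rule \eqref{eq:blow_up_composition} shows $f_{\widetilde Q_j}$ is a $\tau$-blow-up sequence whose $L^1$-limit is a rescaled restriction of $u$ of the form $|Du|(Q_0^\eta)^{-1}(u\circ D_\eta-\text{avg})\,\eta^{n-1}$, and by Lemma~\ref{lemma:almost_constant} one can choose $\eta$ close enough to $1$ that this limit is genuinely non-zero (its total variation stays bounded below). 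Hence $\Tan^\tau(f,x)\neq\{0\}$. The main obstacle is precisely this last transfer step: one must verify that shrinking the cubes toward the correct anchor does not kill all the oscillation, which is exactly where Lemma~\ref{lemma:almost_constant} (the derivative cannot concentrate entirely near the boundary of a near-maximizing cube) is the essential input, and where care with the $\eta\to1$ diagonal extraction is needed.
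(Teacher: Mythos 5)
Your first argument (for $\tau\in[\tau(n),1)$) is exactly the paper's proof: combine Theorem~\ref{thm:p_tau_equals_g} with Remark~\ref{rmk:bounds_g_f} to get $p_f^\tau\ge\tfrac14>0$ $|Df|$-a.e., and then read off from Proposition~\ref{prop:equality_c_sup_osc} that some $\tau$-tangent has positive oscillation. You were also right to worry about $\tau<\tau(n)$: the paper's own proof invokes Theorem~\ref{thm:p_tau_equals_g} for the arbitrary $\tau\in[0,1)$ appearing in the statement, but that theorem (like Lemma~\ref{lemma:g_le_p_tau}, which supplies the relevant inequality $g_f\le p_f^\tau$) is only established for $\tau\ge\tau(n)$; since $p_f^\tau$ is increasing in $\tau$, nothing in the paper directly yields $p_f^\tau\ge\tfrac14$ for small $\tau$. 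So the difficulty you flag is genuine, and the paper's one-line proof glosses over it.

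Your proposed fix, however, does not close the gap. The rescaling device in Lemma~\ref{lemma:m_m1_attained}(i) uses the similarity $D_\eta(y)=y_\infty+\eta(y-y_\infty)$ centered at $y_\infty=\lim T_{Q_j}^{-1}(x)$, which preserves $y_\infty$'s \emph{relative} position inside the shrinking cubes: a direct computation shows $y_\infty\in\tau Q_0^\eta$ if and only if $\|y_\infty\|_\infty<\tau/2$, i.e.\ if and only if $y_\infty\in\tau Q_0$ already. Since we only know $y_\infty\in\tau_0\overline{Q_0}$ with $\tau_0>\tau$, this device cannot ``re-anchor'' $x$ into the smaller $\tau$-core, so the resulting blow-up sequence need not be admissible for $\Tan^\tau(f,x)$. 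To pass to a subcube $S$ with $y_\infty\in\tau S$, one must move $x_S$ away from $y_\infty$ and/or shrink $S$, and then the constraint $S\subset Q_0$ forces $\ell(S)\lesssim(1-\tau_0)/(1-\tau)$, with no a priori reason that $|Du|(S)>0$ — Lemma~\ref{lemma:almost_constant}, which you invoke, only controls $|Du|(\tau'Q_0)$ for cubes concentric with $Q_0$, not for an $S$ anchored at $y_\infty$. If $\spt|Du|$ avoids the admissible region, your transfer produces the zero function. So your ``main obstacle'' paragraph correctly names the issue but does not resolve it: either the corollary should be read with the implicit restriction $\tau\in[\tau(n),1)$ (in which case your first argument, which is the paper's, suffices and the rest is superfluous), or a genuinely different mechanism — not available from Lemma~\ref{lemma:m_m1_attained}(i) or Lemma~\ref{lemma:almost_constant} alone — is needed to reach small $\tau$.
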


\begin{proof}
    By Theorem \ref{thm:p_tau_equals_g} we know that, for $|Df|$-almost every $x$, $p_f^\tau(x)=g_f(x)$. Moreover,  $g_f(x)\in[\tfrac14,\tfrac12]$ for $|Df|$-almost every $x$. Proposition \ref{prop:equality_c_sup_osc} shows that $p_f^\tau(x)$ also coincides with the supremum of $\Osc(u,Q_0)$ among all tangents $u\in\Tan^\tau(f,x)$. It follows that there must exist at least one $\tau$-tangent for which $\Osc(u,Q_0)\ne 0$, and which therefore cannot be the zero function.
\end{proof}

\subsection{Tangents to tangents are tangents}

We now demonstrate that repeatedly applying the blow-up procedure is equivalent to a single blow-up operation (at least almost everywhere, in a sense to be precisely defined later).
This property plays a crucial role in our subsequent proof of Theorem \ref{thm:p_tau_equal_osc}. This type of stability result was first established by Preiss~\cite{preiss} for \emph{tangent measures}. A few key distinctions from Preiss’ original setting are worth noting before delving into technical details. First, in our case the tangent is only defined in the cube $Q_0$, while Preiss' tangents are defined in the whole space. Additionally, our definition of tangents employs uncentered rescalings, while Preiss used only centered rescalings. 
Nevertheless, the proof follows closely the original one, or rather the one that can be found in \cite[Theorem 14.16]{mattila}.

\begin{proposition}\label{prop:tangents_tangents}
    Let $f\in BV_\loc(\Omega)$ and let $\tau\in[0,1)$. At $|Df|$-almost every point $a\in \Omega$, every $u\in \Tan^\tau(f,a)$ has the following properties:
    \begin{enumerate}[(i)]
        \item $u_Q\in \Tan^\tau(f,a)$ for all subcubes $Q\subset Q_0$ with $\tau Q\cap \spt(|Du|)\neq \emptyset$;
        \item $\Tan^\tau(u,y)\subseteq\Tan^\tau(f,a)$ for every $y\in \spt(|Du|)$.
    \end{enumerate}
    
\end{proposition}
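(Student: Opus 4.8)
The plan is to follow the blueprint of Preiss's theorem on tangent measures (in the form presented in \cite[Theorem 14.16]{mattila}), adapted to the cube setting and to uncentered rescalings. The two assertions (i) and (ii) are closely linked: once (i) is established for a full-measure set of points $a$, assertion (ii) follows by unraveling the definition of $\Tan^\tau(u,y)$ as a limit of rescalings $u_{Q_k}$ with $y \in \tau Q_k$, $\ell(Q_k) \to 0$, together with the composition rule $(u_{Q_k})$ being itself of the form $f_{T(S)}$-type objects via Lemma~\ref{lemma:blow_up_composition}, and then a diagonal argument. So the heart of the matter is (i).

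\textbf{Step 1: reduction via a countable dense family.} For (i), I would first fix a countable family $\mathcal{D}$ of open subcubes $Q \subset Q_0$ with rational data (rational centers, rational sidelengths, rational orientations from a countable dense set of frames) that is dense among all subcubes of $Q_0$ in an appropriate sense. It suffices to prove that for $|Df|$-a.e.\ $a$, for every $u \in \Tan^\tau(f,a)$ and every $Q \in \mathcal{D}$ with $\tau Q \cap \spt(|Du|) \neq \emptyset$ and $|Du|(\partial Q)=0$, one has $u_Q \in \Tan^\tau(f,a)$; the general case then follows by approximating an arbitrary admissible $Q$ by cubes in $\mathcal{D}$, using the continuity of the oscillation and the weak-$*$ stability of the rescalings (and the closedness of $\Tan^\tau(f,a)$ from Lemma~\ref{lemma:tangents_are_compact}). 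The condition $|Du|(\partial Q)=0$ can always be arranged by a small perturbation of $\ell(Q)$, since $|Du|$ is a finite measure.

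\textbf{Step 2: the main computation.} Fix such a $u \in \Tan^\tau(f,a)$, generated by a blow-up sequence $f_{Q_j} \to u$ in $L^1(Q_0)$ with $a \in \tau Q_j$, $\ell(Q_j) \to 0$. Given $Q \in \mathcal{D}$ with $\tau Q \cap \spt(|Du|) \neq \emptyset$, set $R_j := T_{Q_j}(Q) \subset Q_j \subset \Omega$; these are cubes with $\ell(R_j) = \ell(Q)\ell(Q_j) \to 0$. By the composition rule~\eqref{eq:blow_up_composition}, $(f_{Q_j})_Q = f_{R_j}$ whenever $|Df_{Q_j}|(Q) > 0$, and the key point is to show $f_{R_j} \to u_Q$ in $L^1(Q_0)$: this follows because $f_{Q_j} \to u$ in $L^1(Q_0)$, because $|Df_{Q_j}|(Q) \to |Du|(Q) > 0$ (using $|Du|(\partial Q)=0$), and because the averages $\fint_Q f_{Q_j} \to \fint_Q u$; the rescaling formula for $(\cdot)_Q$ is continuous under these convergences exactly as in the proof of Lemma~\ref{lemma:m_m1_attained}(i). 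It remains to verify that $a \in \tau R_j$ for $j$ large, i.e.\ that $T_{Q_j}^{-1}(a) \in \tau Q$ eventually. This is where the condition $\tau Q \cap \spt(|Du|) \neq \emptyset$ enters, and it is the delicate point.

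\textbf{Step 3: the obstacle --- locating the blow-up point.} The difficulty is that $y_j := T_{Q_j}^{-1}(a) \in \tau Q_0$ need not converge into $\tau Q$: a priori $y_j$ could cluster at a point $y_\infty$ outside $\overline{\tau Q}$. This is precisely the place where Preiss's argument requires care, and where the hypothesis "for $|Df|$-a.e.\ $a$" is used. The resolution, following \cite[Theorem 14.16]{mattila}, is to note that for $|Df|$-a.e.\ $a$ the tangents $u \in \Tan^\tau(f,a)$ have the property that $a$ blows up to a point of $\spt(|Du|)$ in the following averaged sense: one shows, using a Besicovitch/Vitali covering argument at scale $\ell(Q_j)$ and the Radon--Nikodym differentiation of $|Df|$ with respect to itself along $\tau$-cubes (Theorem~\ref{thm:Lebesgue_diff_tau_cubes_functions}), that for a.e.\ $a$ and every $r>0$ the mass $|Df|(B(a, r\ell(Q_j)))/|Df|(Q_j)$ stays bounded below, which forces the accumulation points $y_\infty$ of $y_j$ to lie in $\spt(|Du|)$. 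Then, given that $\tau Q$ is \emph{open} and meets $\spt(|Du|)$, one can slightly enlarge the construction: replace $Q$ by a slightly translated/dilated cube $Q'$ still in (a suitable countable family refining) $\mathcal{D}$, with $\overline{\tau Q'}$ containing $y_\infty$ in its interior, run the argument with $Q'$, and then let $Q' \to Q$. I would expect to spend the bulk of the write-up making this "$a$ blows up into $\spt(|Du|)$" statement precise and checking it holds on a full-measure set, exactly mirroring \cite[Lemma 14.5 and Theorem 14.16]{mattila}; the rest is bookkeeping with the composition rule and diagonal extractions, and assertion (ii) then drops out by one more diagonalization.
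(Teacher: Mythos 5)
Your Step~1 (reduction to cubes with $|Du|(\partial Q)=0$, closedness of $\Tan^\tau(f,a)$, approximation) and Step~2 (composition rule, continuity of the rescaling map) are sound and match the paper's preliminaries. The gap is in Step~3, and it is not a technical loose end but a real obstruction: the proposed fix does not work.

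You correctly isolate the difficulty: the blown-up point $y_j := T_{Q_j}^{-1}(a)$ need not accumulate inside $\tau Q$. Your resolution is to note that any accumulation point $y_\infty$ lies in $\spt(|Du|)$ (true, for a.e.~$a$, by a doubling argument), then perturb $Q$ to a nearby cube $Q'$ with $y_\infty\in\tau Q'$, run the argument for $Q'$, and let $Q'\to Q$. But nothing forces $y_\infty$ to lie anywhere near $\overline{\tau Q}$: the hypothesis $\tau Q\cap\spt(|Du|)\neq\emptyset$ is a statement about where $\tau Q$ sits relative to $\spt(|Du|)$, not about where the blow-up point $y_\infty$ of the particular generating sequence lands. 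When $y_\infty\notin\overline{\tau Q}$, any $Q'$ with $y_\infty\in\tau Q'$ stays bounded away from $Q$, so the passage $Q'\to Q$ cannot be performed. Your argument therefore only proves $u_{Q'}\in\Tan^\tau(f,a)$ for cubes $Q'$ whose $\tau$-contraction contains some realizable blow-up point of $a$ through $u$ --- a strictly weaker statement. The already-limiting case $\tau=0$ makes this vivid: $\tau Q_0=\{0\}$ forces $y_j\equiv 0$, while $\tau Q=\{x_Q\}$ can be an arbitrary point of $\spt(|Du|)$, so the condition "$a\in\tau R_j$" can simply never hold.

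The paper's proof (and Preiss's, and \cite[Theorem 14.16]{mattila}) does not try to place $a$ inside $\tau T_j(Q)$ at all. Instead it argues by contradiction on a hypothetical positive-measure bad set. One defines $A_{k,m}$ to be the set of points $a$ admitting a tangent $u_a$ and a cube $Q_a$ with $|Du_a|(\tau Q_a)>0$, $|Du_a|(\partial Q_a)=0$, for which $(u_a)_{Q_a}$ is $1/k$-far in $L^1(Q_0)$ from every $f_Q$ with $a\in\tau Q$, $\ell(Q)<1/m$. If $|Df|(A_{k,m})>0$, separability of $L^1(Q_0)$ gives a subset $A$ of positive measure on which the offending rescalings $(u_a)_{Q_a}$ are pairwise within $1/(2k)$. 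Taking $a$ a $|Df|$-density point of $A$ (in the $\tau$-cube sense, Theorem~\ref{thm:tau_Lebesgue}) and a generating sequence $Q_j$ for $u_a$, one shows $T_j(\tau Q_a)\cap A\neq\emptyset$ for large $j$: otherwise, by lower semicontinuity of total variation and \eqref{eq:push_forward}, the fraction of $|Df|$-mass of $Q_j$ lying outside $A$ stays at least $|Du_a|(\tau Q_a)>0$, contradicting full density. Picking $b\in A\cap T_j(\tau Q_a)$ and noting $b\in\tau T_j(Q_a)$, $\ell(T_j(Q_a))<1/m$, the composition rule and the triangle inequality give $\|(u_b)_{Q_b}-f_{T_j(Q_a)}\|_{L^1}<1/k$, contradicting $b\in A_{k,m}$. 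The "for a.e.~$a$" hypothesis thus enters via the density-point argument at a different bad point $b$, not via repositioning $a$ itself --- that shift of perspective is precisely what your sketch is missing.
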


\begin{proof}\phantom{,}\\
\emph{(i)} By Lemma \ref{lemma:m_m1_attained}$(i)$ we can assume that $u\in\Tan^\tau_1(f,x)$, namely, that no mass is lost at the boundary along the sequence that generates $u$. Indeed the rescaling $u_Q$ is invariant with respect to scalar multiplication of $u$. 

We first prove the claim under the additional assumption that $|Du|(\partial Q)=0$. 
    For positive integers $k,m$, let $A_{k,m}$ be the set of all points $a\in\Omega$ for which there exists $u_a\in\Tan^\tau(f,a)$ and a cube $Q_a\subset Q_0$ satisfying $|Du_a|(\tau Q_a)>0$, $|Du_a|(\partial Q_a)=0$ and moreover
    \begin{equation}\label{eq:bigger_than_1/k}
    \|(u_a)_{Q_a}-f_{Q}\|_{L^1(Q_0)}>\frac{1}{k}
    \end{equation}
    for all cubes $Q$ with $a\in \tau Q$ and $0<\ell(Q)<1/m$. In order to reach the conclusion, it suffices to show that $|Df|(A_{k,m})=0$ for all $k,m$.

    Suppose by contradiction that $|Df|(A_{k,m})>0$ for some $k,m$. By the separability of $L^1(Q_0)$, we find a set $A\subset A_{k,m}$ with $|Df|(A)>0$ and such that
    \begin{equation}\label{eq:smaller_than_1/2k}
    \|(u_a)_{Q_a}-(u_b)_{Q_b}\|_{L^1(Q_0)}<1/(2k)
    \end{equation}
    for every $a,b\in A$. Now, let $a$ be a density point of $A$ for $|Df|$, i.e.,
    \begin{equation}\label{eq:lebesgue_for_A}
    1 = \lim_{r\to 0}\frac{|Df|(A\cap Q_r)}{|Df|(Q_r)}
    \end{equation}
    for every family of cubes $\{Q_r\}_r$ with $\ell(Q_r)=r$ and $a\in\tau Q_r$. In light of Theorem \ref{thm:tau_Lebesgue}, $|Df|$-almost every point $a\in A$ is a density point of $A$; we now show that the conclusion holds at all such points.

    Let $(Q_j)_j$ be a sequence of cubes such that $a \in \tau Q_j$, $\ell(Q_j)\to 0$ and 
    \[
    u_a=\lim_{j\to\infty} f_{Q_j}.
    \]
    Call $T_j:=T_{Q_j}$ the affine maps sending $Q_0$ to $Q_j$. We claim that 
    \begin{equation}\label{eq:intersect_A}
        T_j(\tau Q_a)\cap A\neq\emptyset \quad\text{ for $j$ large enough.}
    \end{equation}
    Suppose by contradiction that this is not the case. Then, up to extracting a (not relabeled) subsequence, it holds 
    \begin{equation}\label{eq:contradiction_assumption_Q_j}
    T_j(\tau Q_a)\cap A=\emptyset \quad\text{ for every $j$.}
    \end{equation}
    Since $\tau Q_a\cap\spt(|Du_a|)\neq\emptyset$ by assumption, we further get
    \[
    0<|Du_a|(\tau Q_a)\leq \liminf_{j\to\infty}|Df_{Q_j}|(\tau Q_a) \overset{\eqref{eq:push_forward}}{=} \liminf_{j\to\infty}\frac{|Df|(T_j(\tau Q_a))}{|Df|(Q_j)}.
    \]
    From this, we deduce that 
    \[
    T_{j}(\tau Q_a)\cap\spt(|Df|)=T_{j}(\tau Q_a\cap\spt(|Df_{Q_j}|))\neq \emptyset
    \]
    whenever $j$ is sufficiently large. It follows from \eqref{eq:contradiction_assumption_Q_j} that
    \begin{align*}
    \frac{|Df|(Q_j\cap A)}{|Df|(Q_j)} & = \frac{|Df|([Q_j\cap (T_j(\tau Q_a))^c \cap A)}{|Df|(Q_j)} \\
     & \le \frac{|Df|([Q_j\cap (T_j(\tau Q_a))^c)}{|Df|(Q_j)}\\ 
     & = 1-\frac{|Df|(T_j(\tau Q_a))}{|Df|(Q_j)} \\ 
     &\!\! \overset{\eqref{eq:push_forward}}{=} 1-\frac{|Df_{Q_j}|(\tau Q_a)}{|Df_{Q_j}|(Q_0)} = 1-|Df_{Q_j}|(\tau Q_a).
    \end{align*}
    Therefore, by the lower semicontinuity of the total variation,
    \begin{align*}
    1\overset{\eqref{eq:lebesgue_for_A}}{=}\lim_{j\to\infty}\frac{|Df|(Q_j\cap A)}{|Df|(Q_j)}&\leq 1-\liminf_{j\to\infty} |Df_{Q_j}|(\tau Q_a)\\
    &\leq  1-|Du_a|(\tau Q_a)<1,
    \end{align*}
    where we used the assumption $|Du_a|(\tau Q_a)>0$. We have reached a contradiction, thus proving the claim \eqref{eq:intersect_A}.

    In light of \eqref{eq:intersect_A}, we can choose $j$ such that $\ell(Q_j)<1/m$ and such that there exists a point $b\in A\cap T_j(\tau Q_a)$. By \eqref{eq:smaller_than_1/2k} we must have
    \begin{equation}\label{eq:1/2k}
    \|(u_a)_{Q_a}-(u_{b})_{Q_{b}}\|_{L^1(Q_0)}<1/(2k).
    \end{equation}
    By the composition rule \eqref{eq:blow_up_composition}, we obtain the identity 
    \[
    (u_a)_{Q_a}=\lim_{j\to\infty}  (f_{Q_j})_{Q_a}=\lim_{j\to\infty} f_{T_j(Q_a)},
    \]
    Here, to deduce the first equality we have used that $|Du_a|(\partial Q_a)=0$, the equality $|Df_{Q_j}|(Q_0) = |Du_a|(Q_0)  =1$, and Lemma~\ref{lemma:rel_closed}
    to infer that $|Du_a|(Q_a)=\lim_{j\to\infty}|Df_{Q_j}|(Q_a)$.
    This says that we can choose $j$ such that $\ell(Q_j)<1/m$ and
    \[
    \|(u_a)_{Q_a}-f_{T_j(Q_a)}\|_{L^1(Q_0)}<1/(2k).
    \]
    Applying the triangle inequality with this estimate and \eqref{eq:1/2k} gives
    \[
    \|(u_{b})_{Q_{b}}-f_{T_j(Q_a)}\|_{L^1}<1/k.
    \]
    However, given that $b \in \tau T_j(Q_a)$ and $\ell(T_j(Q_a))\le\ell(Q_j)<1/m$, this contradicts \eqref{eq:bigger_than_1/k} for the point $b$ using $Q=T_j(Q_a)$.  
    We have thus reached a contradiction, whence $|Df|(A_{k,m})=0$. This finishes the proof of (i) under the assumption that $|Du|(\partial Q)=0$.

    To prove the general case we argue by approximation with inner cubes: given any $Q\subset Q_0$, let $Q_k \subset Q$, $k\in\mathbb{N}$, be a sequence of cubes such that $Q_k\nearrow Q$ and $|Du|(\partial Q_k)=0$. Applying the first part of the proposition to each $u_{Q_k}$, we discover that $u_{Q_k}\in \Tan^\tau(f,x)$. Since $u_{Q_k}\to u_Q$ in $L^1(Q_0)$ as $k\to\infty$, the closedness of the tangents (cf. Lemma \ref{lemma:tangents_are_compact}) yields $u_Q\in\Tan^\tau(f,x)$.
    This proves $(i)$. 

    \emph{(ii)} This part follows directly from $(i)$ and the closedness property of $\Tan^\tau(f,x)$. This finishes the proof.
\end{proof}

\begin{remark}
    The previous Proposition holds also for $\tau=1$ when $n=1$. It is sufficient to apply the Lebesgue differentiation theorem with uncentered intervals, which is a consequence of Theorem \ref{thm:vitali_general}.
\end{remark}

\begin{corollary}\label{cor:c_tang_below_c_f}
    Let $f \in BV_\loc(\Omega)$ and let $\tau\in [0,1)$. For $|Df|$-almost every point $x \in \R^n$ the following holds: for all tangents $u \in \Tan^\tau(f,x)$ and all cubes $Q\subseteq Q_0$ satisfying $\tau Q\cap \spt(|Du|)\neq \emptyset$ it holds
    \begin{equation}\label{eq:bound_osc_p_tau}
        \Osc(u_Q,Q_0)=\frac{\Osc(u,Q)}{|Du|(Q)}\ell(Q)^{n-1} \le p_f^\tau(x).
    \end{equation}
    In particular, for $|Df|$-almost every $x \in \Omega$,
    \[
        p_u^\tau(y) \le p_f^\tau(x)  \qquad \text{for every $y \in \spt (|Du|)$}
    \] 
    for all $u \in \Tan^\tau(f,x)$.
\end{corollary}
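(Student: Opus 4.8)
The plan is to derive both assertions from Proposition \ref{prop:tangents_tangents} (tangents of tangents are tangents) together with Proposition \ref{prop:equality_c_sup_osc} and its characterization $p_f^\tau = m^\tau = m^\tau_1$. First, I would fix the $|Df|$-null exceptional set: let $N$ be the union of the $|Df|$-null set outside which Proposition \ref{prop:tangents_tangents}(i) holds, the null set outside which $p_f^\tau = m^\tau = m^\tau_1$ (Proposition \ref{prop:equality_c_sup_osc} actually holds for \emph{every} $x$, so this contributes nothing, but I keep it for bookkeeping), and possibly the null set where the bound \eqref{eq:range_p_body} or Theorem \ref{thm:p_tau_equals_g} fails if needed. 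Work from now on at a point $x \notin N$.

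For the first claim, fix $u \in \Tan^\tau(f,x)$ and a subcube $Q \subseteq Q_0$ with $\tau Q \cap \spt(|Du|) \neq \emptyset$. In particular $|Du|(\tau Q)$ need not be positive a priori, but the condition $\tau Q \cap \spt(|Du|)\neq\emptyset$ is exactly what is needed to form the rescaling $u_Q$ and to apply Proposition \ref{prop:tangents_tangents}(i), which gives $u_Q \in \Tan^\tau(f,x)$. By construction $u_Q$ has zero average and $|Du_Q|(Q_0) = 1$, so $u_Q \in \Tan^\tau_1(f,x)$, and therefore by the definition of $m^\tau_1(x)$ and Proposition \ref{prop:equality_c_sup_osc},
\[
\Osc(u_Q, Q_0) \le m^\tau_1(x) = p_f^\tau(x).
\]
The identity $\Osc(u_Q,Q_0) = \Osc(u,Q)\,\ell(Q)^{n-1}/|Du|(Q)$ in \eqref{eq:bound_osc_p_tau} is just the change of variables under the affine map $T_Q$ defining $u_Q$ (using $|Du_Q|(Q_0) = 1$ and formula \eqref{eq:push_forward}), so this establishes \eqref{eq:bound_osc_p_tau}. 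A small point to be careful about: one must check $|Du|(Q) > 0$ so that the quotient makes sense; this follows because if $|Du|(Q) = 0$ then, since $\tau Q$ meets $\spt(|Du|)$, we would actually need $Q$ to contain boundary mass only, and by shrinking $Q$ slightly (as in the approximation-by-inner-cubes argument inside Proposition \ref{prop:tangents_tangents}) one reduces to the case $|Du|(Q) > 0$; alternatively, if $u_Q$ is not well-defined because $|Du|(Q)=0$, the inequality is vacuous. I would state this reduction in one sentence.

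For the "in particular" statement, fix $u \in \Tan^\tau(f,x)$ and $y \in \spt(|Du|)$. By Proposition \ref{prop:tangents_tangents}(ii), $\Tan^\tau(u,y) \subseteq \Tan^\tau(f,x)$. Now apply Proposition \ref{prop:equality_c_sup_osc} \emph{to the function $u$} at the point $y$:
\[
p_u^\tau(y) = \sup\{\Osc(v,Q_0) : v \in \Tan^\tau(u,y)\} \le \sup\{\Osc(v,Q_0) : v \in \Tan^\tau(f,x)\} = p_f^\tau(x),
\]
where the last equality is again Proposition \ref{prop:equality_c_sup_osc} applied to $f$ at $x$ (using the last clause of Lemma \ref{lemma:m_m1_attained}(ii) that the sup of $\Osc(v,Q_0)$ over all of $\Tan^\tau$ equals $m^\tau = m^\tau_1$). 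Since $u \in BV_\loc(\Omega)$ — indeed $u \in BV(Q_0)$ — Proposition \ref{prop:equality_c_sup_osc} does apply to it verbatim.

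I do not expect a serious obstacle here; the corollary is essentially a repackaging of the two preceding propositions. The only delicate point is the bookkeeping of the $|Df|$-null exceptional set and the degenerate case $|Du|(Q) = 0$ in the displayed quotient, which I would dispatch with the shrinking-cube remark above.
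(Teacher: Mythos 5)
Your proof is correct and follows exactly the paper's intended route: the paper's own proof is the one-line remark that the corollary ``follows directly from Proposition~\ref{prop:tangents_tangents} and~\ref{prop:equality_c_sup_osc},'' and you have simply unpacked that. One small clarification: the worry about $|Du|(Q)=0$ (and the proposed shrinking-cube workaround, which would not actually help) is unnecessary, since the hypothesis $\tau Q\cap\spt(|Du|)\neq\emptyset$ already forces $|Du|(Q)>0$ --- for $\tau>0$ the set $\tau Q$ is open and meets $\spt(|Du|)$, so $|Du|(Q)\ge|Du|(\tau Q)>0$; for $\tau=0$ one has $x_Q\in\spt(|Du|)$ and $Q$ is an open neighbourhood of $x_Q$.
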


\begin{proof}
    The proof follows directly from Proposition \ref{prop:tangents_tangents} and \ref{prop:equality_c_sup_osc}.
\end{proof}

Corollary \ref{cor:BMO} is a direct consequence of this result: 

\begin{proof}[Proof of Corollary \ref{cor:BMO}]
By Corollary \ref{cor:c_tang_below_c_f}, at $|Df|$-a.e. $x$, for every $\tau$-tangent $u\in\Tan^\tau(f,x)$ and every interval $I\subseteq (-\frac12,\frac12)$ it holds \[
\Osc(u,I)\le \frac{\Osc(u,I)}{|Du|(I)}\le p_f^\tau(x).
\]
Taking the supremum over all intervals, we obtain $\|u\|_{\mathrm{BMO}((-\frac12,\frac12))}\le p_f^\tau(x)$ for every $\tau$-tangent $u$. On the other hand, by Lemma \ref{lemma:m_m1_attained} and Proposition \ref{prop:equality_c_sup_osc} we can find a tangent $u$ realizing $p_f^\tau(x)=\Osc(u,(-\frac12,\frac12))\le \|u\|_{\mathrm{BMO}((-\frac12,\frac12))}$, and the claimed equality follows.
\end{proof}

\subsection{Tangents of \texorpdfstring{$SBV$}{SBV} functions}\label{sec:SBV_tangents}

In this section we compute explicitly the tangent sets $\Tan^\tau(f,x)$ associated with an $SBV$ function $f$. Given a hyperplane $H_{\nu,c}:=\{x\in\R^n:\,x\cdot\nu= c\}$, we define $j_{a,b,\nu,c}$ as the function that jumps from the value $a$ to the value $b$ when crossing the hyperplane $H_{\nu,c}$ in direction $\nu$ (see Figure~\ref{fig:tangents_SBV_jump}). 

\begin{center}
\begin{figure}
\begin{tikzpicture}[scale=1.7]

		
        \filldraw[fill=blue!30] (1.08,0) circle (0pt) node[right] {$j=a$}; 
  
		\filldraw[fill=red!30] (0,1.35) circle (0pt) node[below left] {$j = b$};

        
        \draw[line width=0.25mm](-1,-1) rectangle (1,1);

        \draw[fill=gray!20,dashed,line width=0mm] (-.75,-.75) rectangle (.75,.75);

        \draw[line width=0.25mm,color=black] (-1.6,-.2) -- (1.4,1.3) node[below right] {$H_{\nu,c}$};

        

        \filldraw (0,0) circle (0.5pt) node[below right] {};

        \filldraw (1,-1) circle (0pt) node[above right] {$Q_0$};

        \filldraw (.75,-.75) circle (0pt) node[above left] {$\tau Q_0$};

	\end{tikzpicture}
  \caption{The function $j=j_{a,b,\nu,c}$ of Lemma \ref{lemma:tangents_of_SBV} (in 2d). The function $j$ takes constant values $a,b$ and jumps from value $a$ to $b$ across the line $H_{\nu,c}$. }\label{fig:tangents_SBV_jump} 
 \end{figure}
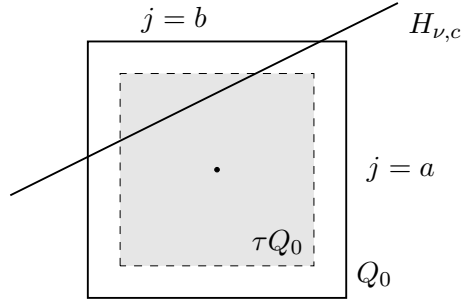
\end{center}
The following lemma is a direct consequence of the fine properties of $BV$ functions (see \cite{AFP}), and its proof is omitted: 
\begin{lemma}\label{lemma:tangents_of_SBV}
    Let $f\in SBV_\loc(\Omega)$, and $\tau\in [0,1)$. Then
    \begin{enumerate}[(i)]
        \item At $|D^af|$-almost every point, $\Tan^\tau(f,x)$ consists of all linear maps $u:Q_0\to\R$ with gradient of modulus 1;
        \item At $|D^jf|$-almost every point, $\Tan^\tau(f,x)$ consists of all the maps of the form $j_{a,b,\nu,c}$ among all $a,b,\in\R$, $\nu\in\mathbb{S}^1$ and $c\in \R$ such that $H_{\nu,c}\cap \tau \overline{Q_0}\ne\emptyset$ and 
        \[
        \int_{Q_0} j_{a,b,\nu,c}(x)\,dx  = 0, \qquad |Dj_{a,b,\nu,c}|(Q_0)=1.  
        \]
    \end{enumerate}
\end{lemma}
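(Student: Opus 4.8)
The statement to prove is Lemma~\ref{lemma:tangents_of_SBV}, which identifies $\Tan^\tau(f,x)$ at $|D^af|$- and $|D^jf|$-a.e.\ point for an $SBV_\loc$ function. The plan is to treat the two items separately, in each case exploiting the standard fine structure of $SBV$ functions (approximate differentiability a.e.\ with respect to $|D^af| = |\nabla f|\,\Lcal^n$, and the jump structure of $D^jf$), combined with a careful bookkeeping of the uncentered rescaling $f_Q$ and the constraint $x\in\tau Q$.

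\smallskip
\emph{Absolutely continuous part.} At $|D^af|$-a.e.\ point $x$, $f$ is approximately differentiable with $\nabla f(x)\neq 0$ and, moreover, $x$ is a Lebesgue point of $\nabla f$. First I would show that along \emph{any} blow-up sequence $Q_j$ with $x\in\tau Q_j$ and $\ell(Q_j)\to 0$, the rescalings $f_{Q_j}$ converge in $L^1(Q_0)$ to a linear map. The key point is that $|Df|(Q_j) = |\nabla f|(Q_j) + |D^j f|(Q_j) + |D^cf|(Q_j)$; since $f\in SBV$ the Cantor term vanishes, and by Lebesgue differentiation of the measure $|D^jf|$ (which is concentrated on an $\Hcal^{n-1}$-rectifiable set, hence $|D^jf|(Q_j)/\ell(Q_j)^n\to 0$ at $|D^af|$-a.e.\ point since $\ell(Q_j)^n = |Q_j|$) one gets $|Df|(Q_j)/\ell(Q_j)^n \to |\nabla f(x)|$. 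The approximate differentiability then forces, after the affine change of coordinates $T_{Q_j}$, the function $\ell(Q_j)^{-1}\bigl(f(T_{Q_j}\cdot)-\fint_{Q_j}f\bigr)$ to converge in $L^1(Q_0)$ to $y\mapsto \nabla f(x)\cdot R_j y$ up to a rotation $R_j$ (coming from the orientation of $Q_j$); normalizing by $|Df|(Q_j)\ell(Q_j)^{-n}\cdot\ell(Q_j)^{-1}\cdot\ell(Q_j)^{n-1}\cdot\ell(Q_j) = |\nabla f(x)|$ and passing to a subsequence for $R_j\to R$ yields a linear map with gradient of modulus $1$. Conversely, every such linear map is attained: it suffices to choose $Q_j$ centered near $x$ with the appropriate orientation $\bbf_{Q_j}$ so that $R_j = R$ for all $j$, and to check that the constraint $x\in\tau Q_j$ is compatible (e.g.\ take $Q_j$ centered exactly at $x$). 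One also checks the normalization constraints $\fint_{Q_0}u = 0$ and $|Du|(Q_0)=1$ are automatic for linear maps with unit gradient.

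\smallskip
\emph{Jump part.} At $|D^jf|$-a.e.\ point $x$, by the rectifiability of the jump set and the blow-up theorem for $BV$ functions, the rescalings $\ell(Q_j)^{-n}\bigl(f(T_{Q_j}\cdot)-c_j\bigr)$ (for suitable constants $c_j$) converge in $L^1(Q_0)$ to a jump function $j_{a^-(x),a^+(x),\nu(x),c}$ \emph{up to the affine change $T_{Q_j}$}; here the fact that $x\in\tau Q_j$ (rather than $x$ being the center) is exactly what makes the offset $c$ of the limiting hyperplane range over all values with $H_{\nu,c}\cap\tau\overline{Q_0}\neq\emptyset$, since $T_{Q_j}^{-1}(x)\in\tau Q_0$ can be any point in $\tau\overline{Q_0}$ in the limit. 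The jump direction $\nu$ and the jump heights can be rotated and rescaled freely by choosing the orientation $\bbf_{Q_j}$ and by the freedom in normalization, so that after imposing $\fint_{Q_0}j = 0$ and $|Dj|(Q_0)=1$ one recovers exactly the stated family. The only subtle point here is again that $|Df|(Q_j)$ must be asymptotically $\Hcal^{n-1}$ of the jump set inside $Q_j$, which holds at $|D^jf|$-a.e.\ $x$ because the absolutely continuous and Cantor parts contribute lower-order mass at a jump point.

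\smallskip
\emph{Main obstacle.} I expect the delicate part to be the careful handling of the uncentered constraint $x\in\tau Q$ together with the orientation degree of freedom: one must argue that \emph{every} admissible position of $x$ inside $\tau Q_0$ (in the limit) and every admissible orientation is realized, and no others — in particular that the hyperplane in the jump case cannot cross $\tau\overline{Q_0}$'s exterior, and that in the linear case the position of $x$ is irrelevant (any linear map with unit gradient arises regardless of where $x$ sits). This is essentially bookkeeping built on the classical $SBV$ blow-up theorems in~\cite{AFP}, which is why the authors state it without proof; a fully rigorous write-up would mainly consist in tracking the normalizing factors through $T_{Q_j}$ and invoking Lebesgue differentiation for $|D^af|$ and $|D^jf|$ against cubes satisfying the $\tau$-uncentered condition (Theorem~\ref{thm:tau_Lebesgue}).
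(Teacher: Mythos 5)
The paper omits the proof of this lemma entirely, stating only that it ``is a direct consequence of the fine properties of $BV$ functions (see \cite{AFP})'' — so there is no paper proof to compare against. Your sketch is, modulo two small bookkeeping slips, a correct outline of exactly the standard argument the paper alludes to: approximate differentiability and the Radon--Nikodym decomposition $|Df|=|D^af|+|D^jf|$ in the absolutely continuous case, the rectifiable blow-up of the jump set in the jump case, and the $\tau$-uncentered Lebesgue differentiation (Theorem~\ref{thm:tau_Lebesgue} and Theorem~\ref{thm:Lebesgue_diff_tau_cubes_functions}) to control the mutual singularity of $|D^af|$ and $|D^jf|$ at $\tau$-uncentered cubes. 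The two slips: in the jump case the prefactor you write, $\ell(Q_j)^{-n}$, is wrong — near a jump point $|Df|(Q_j)\sim c\,\ell(Q_j)^{n-1}$ so the prefactor $\ell(Q_j)^{n-1}/|Df|(Q_j)$ is $O(1)$, which is what produces a bounded jump limit; and the displayed chain of powers in the absolutely continuous case does not reduce to $|\nabla f(x)|$ as written (the correct factorization is $f_{Q_j}=\frac{\ell(Q_j)^n}{|Df|(Q_j)}\cdot\frac{1}{\ell(Q_j)}\bigl(f(T_{Q_j}\cdot)-\fint_{Q_j}f\bigr)$, with the first factor tending to $|\nabla f(x)|^{-1}$ and the second to the linear blow-up). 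Neither slip affects the validity of the plan, and your tracking of the orientation freedom (giving all of $O(n)$ for the linear case, and all hyperplanes meeting $\tau\overline{Q_0}$ from the constraint $T_{Q_j}^{-1}(x)\in\tau Q_0$ in the jump case) is precisely the essential uncentered bookkeeping the authors have in mind.
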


This result allows us to give a proof of Corollary \ref{cor:G_equals_K0_on_SBV}, i.e., 
\[
\Ksf_0(f)=\Gsf(f,\Omega)\,,
\quad f\in SBV_\loc(\Omega).
\]

\begin{proof}[Proof of Corollary \ref{cor:G_equals_K0_on_SBV}]
    By \eqref{eq:DFP} we know that $\Ksf_0(f)=\frac14 |D^af|(\Omega)+\frac12 |D^jf|(\Omega)$. Let us fix $\tau\in (0,1)$ sufficiently close to 1 so that Theorem \ref{thm:p_tau_equals_g} applies. We only need to verify that $p^\tau_f(x)$ is $\frac14$ at $|D^af|$-almost every point, and $\frac12$ at $|D^jf|$-almost every point. Recall from Theorem \ref{thm:p_tau_equal_osc} that $p^\tau_f(x)$ coincides with the largest oscillation among the $\tau$-tangents. Let us analyze absolutely continuous and jump points separately:

    For $|D^af|$-almost every point, tangents are linear with modulus 1, and by \cite[Lemma~3.1]{FMS} the supremum of $\Osc(u,Q_0)$ in this class is precisely $\frac14$. In fact, it is attained by functions whose gradient is aligned with one of the coordinate axes.

    For $|D^jf|$-almost every point, the jump function $u=j(-\frac12,\frac12,H_{e_1,0})$ (which is in $\Tan^\tau(f,x)$ by Lemma \ref{lemma:tangents_of_SBV}) satisfies $ \Osc(u,Q_0)=\frac12$. Since $p_f^\tau(x)\le \frac12$ always, we conclude $p^\tau_f(x)=\frac12$.
    
    The proof is complete.
\end{proof}

\vspace{0.5cm}

\section{Rigidity associated with extremal Poincaré constants}\label{sec:rigidity}

Motivated by the analysis in the $SBV$ case, one might hope to characterize  the jump points of $f\in BV$ as those where $p_f^\tau(x)=\frac12$, and the absolutely continuous points as those where $p_f^\tau(x)=\frac14$. Unfortunately, this conclusion is not always true, as there exist Cantor functions on $[0,1]$ that reach value $\frac14$ or $\frac12$ at $|Df|$-almost every point.
Nevertheless, when $p_f^\tau$
attains an extreme value, it is possible to infer certain rigidity properties on the set of tangents. More specifically, we establish (see Propositions~\ref{prop:rigidity_1/2} and~\ref{prop:rigidity_1/4}) the following rigidity properties of $p_f^\tau(x)$:
\begin{itemize}\itemsep=5pt
    \item The value $p_f^\tau(x) = \frac 12$ is attained if and only if there exists a jump tangent at $x$.
    \item On the other hand, $p_f^\tau(x) = \frac 14$ if and only if every tangent $u \in \Tan^\tau(f,x)$ satisfies $p_u^\tau = \frac 14$ at $|Du|$-almost every point. 
\end{itemize}

We begin by recording the following generalization of Hadwiger's result~\cite{Hadwiger}, about the rigidity of functions that attain the maximal Poincar\'e constant on the unit cube. 

\begin{lemma}[Maximizers of {Poincar\'e} inequality on the unit cube]\label{lemma:maximizers_poincare}
    Let $u\in BV(Q_0)$ attain the maximal Poincaré  constant, namely,
    \[
    \frac{
    \Osc(u,Q_0)}{|Du|(Q_0)}=\frac12.
    \]
    Then, up to addition and multiplication  by a constant, $u$ is the characteristic function of a half-cube of $Q_0$. \footnote{By a half-cube of $Q_0$, we refer to a set of the form
\[
    \{y\in Q_0:\, y\cdot e_i\ge 0\}\qquad\text{or}\qquad \{y\in Q_0:\, y\cdot e_i\le 0\},
\]
where $e_1,\dots,e_n$ is the canonical basis of $\R^n$.} 
\end{lemma}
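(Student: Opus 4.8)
The strategy is to reduce the statement to the one-dimensional situation via the coarea formula and a slicing argument, thereby isolating the extremal one-dimensional profiles and then recombining them. First I would recall the coarea formula for $BV$: writing $u \in BV(Q_0)$ and setting $E_t := \{u > t\}$, one has $|Du|(Q_0) = \int_{\R} P(E_t; Q_0)\,dt$, and similarly the mean oscillation of $u$ admits a layer-cake representation. Indeed, using the identity $\int_{Q_0}|u - \fint_{Q_0} u| \, dx = 2\int_{\R} |E_t \cap Q_0| \cdot |Q_0 \setminus E_t|\,dt$ up to the appropriate normalization (this is the standard Fubini computation underpinning the $\frac12$ Poincaré constant for characteristic functions), one gets
\[
\Osc(u,Q_0) = 2\int_{\R} |E_t| (1 - |E_t|)\,dt, \qquad |Du|(Q_0) = \int_{\R} P(E_t; Q_0)\,dt,
\]
where $|Q_0| = 1$. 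For each $t$, the isoperimetric-type inequality on the cube (equation~\eqref{eq:Poincare_on_cube} applied to $\1_{E_t}$) gives $2|E_t|(1-|E_t|) \le \tfrac12 P(E_t; Q_0)$, and equality forces $E_t$ (if nontrivial) to be a half-cube.

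Next I would run the equality-analysis. Since $\Osc(u,Q_0) = \tfrac12 |Du|(Q_0)$, the inequality $2|E_t|(1-|E_t|) \le \tfrac12 P(E_t;Q_0)$ must be an equality for $\Lcal^1$-a.e. $t$ with $P(E_t;Q_0) > 0$. Hence for a.e. such $t$, $E_t \cap Q_0$ is (up to negligible sets) a half-cube $H_t$, say $H_t = \{y \cdot e_{i(t)} \ge c_t\}$ or its complement, with $|H_t| = \tfrac12$ so that $c_t = 0$ — because equality in $2s(1-s) \le \tfrac12$ (which is what the combination of the two extremal conditions $2|E_t|(1-|E_t|)=\tfrac12 P$ and $P \ge 1$, the minimal perimeter of a half-cube, forces after integrating) pins $|E_t| = \tfrac12$. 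Wait — more carefully: one has $\Osc \le \tfrac12 |Du|$ always, so equality in the integrated inequality forces equality a.e. in $t$ in $2|E_t|(1-|E_t|) \le \tfrac12 P(E_t; Q_0)$; this already gives $E_t$ a half-cube through some bisecting axis-parallel hyperplane (by the rigidity in~\eqref{eq:Poincare_on_cube}, which the paper records as Lemma~\ref{lemma:maximizers_poincare} — but here I am proving that lemma, so I must instead invoke the rigidity of the set-level inequality for $\1_{E_t}$, i.e., the $n=1$-reducible fact that a set in a cube with $2|E|(1-|E|) = \tfrac12 P(E;Q_0)$ is a bisecting half-cube). The latter is exactly Hadwiger's rigidity statement for sets, which I would either cite from~\cite{Hadwiger} or prove by slicing $E_t$ in each coordinate direction and using the one-dimensional fact that among subsets of $(-\tfrac12,\tfrac12)$ the ratio $2|A|(1-|A|)/\#(\partial^* A)$ is maximized (value $\tfrac14$ per endpoint, total $\tfrac12$ with one endpoint) only by half-intervals.

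The final step is to reconstruct $u$ from its super-level sets. Knowing that $E_t = H_t$ is a bisecting half-cube for a.e. $t \in (\essinf u, \esssup u)$, and that the family $\{E_t\}$ is nested and decreasing in $t$, I would argue that the direction $i(t)$ and the side must be \emph{constant} in $t$: if $H_t$ and $H_s$ for $t < s$ were half-cubes through different coordinate hyperplanes (or the same hyperplane but opposite sides), nestedness $E_s \subseteq E_t$ would fail (two distinct bisecting half-cubes never satisfy $\subseteq$ up to null sets — their symmetric difference has positive measure in both directions). Hence there is a fixed half-cube $H$ and a set of levels so that $E_t = H$ for a.e. $t$ in an interval and $E_t \in \{\emptyset, Q_0\}$ outside; reconstructing $u = \essinf u + (\esssup u - \essinf u)\1_H$ up to a null set, which is an affine function of $\1_H$. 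The main obstacle I anticipate is the rigidity at the level of sets (the Hadwiger step): the integrated equality only gives equality for a.e. $t$, and one must handle the degenerate levels where $P(E_t;Q_0) = 0$ (so $E_t$ is trivial) and ensure measurability/consistency of the selection $t \mapsto (i(t), \text{side})$; the slicing proof of set-rigidity is the technically delicate part, since one needs that equality in the coarea-sliced Poincaré inequality in \emph{every} coordinate direction simultaneously forces the set to be a slab, and then that a finite-perimeter slab in a cube with the extremal perimeter-to-measure ratio is a half-cube.
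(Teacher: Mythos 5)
Your proposal follows the same route as the paper: coarea decomposition into level-set indicators, reduction to Hadwiger's rigidity for sets, and a nestedness argument to pin down a single half-cube. However, one step is stated incorrectly. The claimed ``identity''
\[
\int_{Q_0}\Big|u - \fint_{Q_0} u\Big|\, dx = 2\int_\R |E_t|\,(1-|E_t|)\,dt
\]
is \emph{false} for general $u \in BV(Q_0)$. For example, if $u$ takes the values $0,1,2$ on three sets of measure $\tfrac13$ each, the left-hand side equals $\tfrac23$ while the right-hand side equals $\tfrac89$. What is true is the layer-cake identity for the double integral, $\iint_{Q_0 \times Q_0} |u(x)-u(y)|\,dx\,dy = 2\int_\R |E_t|(1-|E_t|)\,dt$, together with the one-sided inequality $\Osc(u,Q_0) \le \iint_{Q_0\times Q_0} |u(x)-u(y)|\,dx\,dy$, which follows from Jensen applied to $u(x) - \fint u = \int_{Q_0}(u(x) - u(y))\,dy$. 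Equality in this last step holds for two-valued $u$ (in particular for indicators), which is presumably the source of the confusion, but not for general $BV$ functions.

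The error is repairable because the inequality runs in the direction you need. The corrected chain
\[
\tfrac12 |Du|(Q_0) = \Osc(u,Q_0) \le 2\int_\R |E_t|(1-|E_t|)\,dt \le \tfrac12 \int_\R P(E_t;Q_0)\,dt = \tfrac12|Du|(Q_0)
\]
still forces equality at each step, and the rest of your argument (Hadwiger rigidity for a.e.\ nontrivial level set, then nestedness of $\{E_t\}$ to select a single bisecting half-cube) coincides with the paper's. For the record, the paper avoids this pitfall entirely by observing that $v \mapsto \Osc(v,Q_0)$ is convex and writing $u$, via coarea and the zero-average normalization, as a convex combination of normalized level-set indicators; Jensen then delivers precisely the inequality above without any appeal to the double-integral layer-cake formula.
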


\begin{proof}
    We know that $\Osc(v,Q_0)\le \frac12 |Dv|(Q_0)$ for every $v \in L^1(Q_0)$. Therefore, the functions that attain equality are precisely, up to multiplication by a constant, the maximizers of
    \[
    \max\{\Osc(v,Q_0):\, |Dv|(Q_0)\le 1\}.
    \]
    Observe that the map $v\mapsto \Osc(v,Q_0)$ is convex. Moreover, the set of zero-average functions $v \in L^1(Q_0)$ with $|Dv|(Q_0)\le 1$ is convex and compact in $L^1(Q_0)$. 
    Let thus $u$ be a maximizer, that we can suppose satisfies $|Du|(Q_0)=1$.
    
    Using the coarea formula we write $u$ as a convex combination of rescaled characteristic functions of its super- and sub-level sets. More precisely, setting $m\coloneqq \Lcal^n\text{-}\mathrm{essinf}_{Q_0}\,u$ and $M\coloneqq \Lcal^n\text{-}\mathrm{esssup}_{Q_0} u\, $, we observe that the zero-average condition entails
    $m\le 0$ and $M\ge 0$, so that
    \[
    u(x)=\int_0^M \1_{\{u>t\}}(x) \,dt-\int_{m}^0 \1_{\{u<t\}}(x)\, dt
    \]
    and
    \[
    |Du|(Q_0)=\int_0^M |D\1_{\{u>t\}}|(Q_0)\, dt+\int_{m}^{0} |D\1_{\{u<t\}}|(Q_0)\, dt.
    \]
    Define
    \[
    u_t(x):=
    \begin{cases}
        \frac{1}{|D\1_{\{u>t\}}|(Q_0)}\1_{\{u>t\}}(x) & \text{if $t\in(0,M)$}\\
        -\frac{1}{ |D\1_{\{u<t\}}|(Q_0)}\1_{\{u<t\}}(x) & \text{if $t\in(m,0)$}
    \end{cases}
    \]
    and 
    \[
    \lambda(t):=
    \begin{cases}
        |D\1_{\{u>t\}}|(Q_0) & \text{if $t\in(0,M)$}\\
        |D\1_{\{u<t\}}|(Q_0) & \text{if $t\in(m,0)$}
    \end{cases}\,.
    \]
    Notice that $\lambda(t)>0$ for $t\in (m,M)$, because in this range the sub- and super-level sets are non-trivial. Hence, the functions $u_t$ are well-defined, and moreover
    \[
    \int_m^M \lambda(t) \, dt = |Du|(Q_0) = 1.  
    \]
    By construction, $|Du_t|(Q_0)=1$ for every $t \in (m,M)$, and $u$ can be written as a convex combination of the $u_t$'s:
    \[
    u(x)=\int_{m}^M \lambda(t) \,u_t(x)\, dt\,.
    \]
    Since $u$ maximizes the oscillation, and by convexity of the latter, we get
    \[
    \frac12 =\Osc(u,Q_0)\le \int_{m}^{M}\lambda(t) \Osc(u_t,Q_0) \, dt \le \int_{m}^M \lambda(t)\frac12 |Du_t|(Q_0)\, dt =\frac12.
    \]
    This means that all inequalities are equalities. In particular, 
    \[
    \Osc(u_t,Q_0) = \frac 12 \quad \text{for $\Lcal^1$-almost every $t \in (m,M)$.} 
    \]
    In turn, this implies that $u_t$ maximizes the oscillation in $Q_0$ for $\Lcal^1$-almost every $t \in (m,M)$. Hadwiger~\cite{Hadwiger} proved that, among characteristic functions $v=\1_E$, those satisfying $\Osc(v,Q_0)=\frac12 |Dv|(Q_0)$ correspond to the case when $E$ is a half-cube. This shows that $\Lcal^1$-almost every super- or sub-level set (in the range $(m,M)$) of $u$ must be a half-cube. Now, by monotonicity of super- and sub-level sets, and by the zero-average condition, there exists a half-cube $H$ such that 
    \begin{align*}
    \1_{\{u >t\}} &= \1_H  \quad  \text{ for $\Lcal^1$-a.e. $t \in (0,M)$,} \\
    \1_{\{u <t\}} &= \1_{H^c}  \quad  \text{for $\Lcal^1$-a.e. $t \in (m,0)$}.
    \end{align*}
    The conclusion now follows.    
\end{proof}

As a direct consequence of this result, combined with Theorem \ref{thm:p_tau_equal_osc}, we obtain the following rigidity for tangents at points in the set $\{p_f^\tau = \frac 12\}$:

\begin{proposition}[Rigidity for $p^\tau_f=1/2$]\label{prop:rigidity_1/2}
    Let $f\in BV_\loc(\Omega)$ and let $\tau\in[0,1)$. Then for every $x \in \Omega$, the following conditions are equivalent:
    \begin{enumerate}[(i)]
        \item $\Tan^\tau(f,x)$ contains a (non-trivial) jump function across a half-cube.
        \item $p_f^\tau(x)=\frac12$.
    \end{enumerate}
\end{proposition}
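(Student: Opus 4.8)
The plan is to prove the equivalence by combining Theorem~\ref{thm:p_tau_equal_osc} with the rigidity statement of Lemma~\ref{lemma:maximizers_poincare}, exploiting the fact that $p_f^\tau(x)$ is \emph{attained} as a maximum oscillation among tangents (Lemma~\ref{lemma:m_m1_attained}).

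\textbf{Proof of $(ii)\Rightarrow(i)$.} Assume $p_f^\tau(x)=\tfrac12$. By Theorem~\ref{thm:p_tau_equal_osc} (equivalently Proposition~\ref{prop:equality_c_sup_osc}) we have $p_f^\tau(x)=m^\tau(x)=m_1^\tau(x)$, and since $p_f^\tau(x)>0$ the set $\Tan^\tau(f,x)$ is non-trivial, so by Lemma~\ref{lemma:m_m1_attained}$(ii)$ the supremum is attained: there exists $u\in\Tan^\tau_1(f,x)$ with $\Osc(u,Q_0)=\tfrac12=\tfrac12|Du|(Q_0)$. Then $u$ realizes equality in the Poincar\'e--Wirtinger inequality on $Q_0$, so Lemma~\ref{lemma:maximizers_poincare} applies: up to addition and multiplication by a constant, $u$ is the characteristic function of a half-cube of $Q_0$, i.e.\ a jump function across a half-cube. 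This gives $(i)$.

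\textbf{Proof of $(i)\Rightarrow(ii)$.} Conversely, suppose $\Tan^\tau(f,x)$ contains a non-trivial jump function $u$ across a half-cube $H$, so that (after the harmless normalization in $\Tan_1^\tau$) $u=c\,\1_H+d$ with $|Du|(Q_0)=1$ and zero average. A half-cube has perimeter $1$ in $Q_0$, so $|D\1_H|(Q_0)=1$, forcing $|c|=1$; the zero-average condition fixes $d$. A direct computation gives $\Osc(u,Q_0)=\tfrac12$ (this is exactly the equality case recorded after \eqref{eq:Poincare_on_cube}, cf.\ the computation in the proof of Corollary~\ref{cor:G_equals_K0_on_SBV}). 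Hence by Theorem~\ref{thm:p_tau_equal_osc},
\[
p_f^\tau(x)=\sup_{v\in\Tan^\tau(f,x)}\Osc(v,Q_0)\ge \Osc(u,Q_0)=\tfrac12,
\]
while the reverse inequality $p_f^\tau(x)\le\tfrac12$ holds in general (it is the Poincar\'e bound, passed to tangents, or one may simply cite Corollary~\ref{cor:range_p}/Remark~\ref{rmk:bounds_g_f} combined with Theorem~\ref{thm:p_tau_equals_g} where applicable, but the elementary bound $\Osc(v,Q_0)\le\tfrac12|Dv|(Q_0)\le\tfrac12$ for every tangent $v$ suffices). Therefore $p_f^\tau(x)=\tfrac12$, proving $(ii)$.

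\textbf{Main obstacle.} There is essentially no serious obstacle here: the proposition is a clean corollary of the two cited results. The only point requiring a small amount of care is making sure the maximizer supplied by Lemma~\ref{lemma:m_m1_attained} has \emph{unit} total variation (so that Lemma~\ref{lemma:maximizers_poincare} can be invoked with the exact equality $\Osc=\tfrac12|Du|$ rather than an inequality), and conversely that a jump tangent across a half-cube, once normalized, indeed has total variation exactly $1$ and oscillation exactly $\tfrac12$ — this uses that the relative perimeter of a half-cube in $Q_0$ equals $1$. Both are immediate from the definitions of $\Tan_1^\tau$ and of the rescaling $f_Q$.
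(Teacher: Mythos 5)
Your proof is correct and takes essentially the same approach as the paper, which simply asserts the proposition to be a direct consequence of Lemma~\ref{lemma:maximizers_poincare} (Hadwiger-type rigidity) combined with Theorem~\ref{thm:p_tau_equal_osc}; you have just fleshed out the details, correctly invoking Lemma~\ref{lemma:m_m1_attained} to obtain an attained unit-variation maximizer and handling the trivial case $p_f^\tau(x)=0$.
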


We now turn to the  analysis of rigidity of tangents corresponding to points in the set $\{p_f^\tau = \frac 14\}$:

\begin{proposition}[Rigidity for $p^\tau_f=1/4$]\label{prop:rigidity_1/4}
    Let $f\in BV_\loc(\Omega)$ and let $\tau\in[0,1)$. Then at $|Df|$-almost every $x\in\{p_f^\tau =\frac14\}$, every $u\in \Tan^\tau(f,x)$ satisfies
    \[
    \Osc(u,Q)\ell(Q)^{n-1}\le \frac14 |Du|(Q)
    \]
    for every cube $Q\subset Q_0$ such that $\tau Q\cap \spt |Du|\ne \emptyset$.
    In particular,
   \[
    p_u^\tau = \frac14\quad\text{$|Du|$-almost everywhere.}
    \]
\end{proposition}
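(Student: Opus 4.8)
The plan is to read this off almost directly from Corollary~\ref{cor:c_tang_below_c_f}, which already carries all the analytic weight (that corollary being itself a consequence of the stability result Proposition~\ref{prop:tangents_tangents}, ``tangents of tangents are tangents'').

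First I would discard a $|Df|$-null set so that every point $x$ under consideration satisfies the conclusion of Corollary~\ref{cor:c_tang_below_c_f}, and then restrict to those $x$ lying in $\{p_f^\tau=\tfrac14\}$; this is still a $|Df|$-full subset of $\{p_f^\tau=\tfrac14\}$. At such a point $x$, for every $u\in\Tan^\tau(f,x)$ and every cube $Q\subseteq Q_0$ with $\tau Q\cap\spt(|Du|)\neq\emptyset$ — a condition which forces $|Du|(Q)>0$, since $\tau Q$ is open and meets $\spt(|Du|)$ — Corollary~\ref{cor:c_tang_below_c_f} gives
\[
\frac{\Osc(u,Q)}{|Du|(Q)}\,\ell(Q)^{n-1}=\Osc(u_Q,Q_0)\;\le\; p_f^\tau(x)=\tfrac14 .
\]
Multiplying by $|Du|(Q)$ this is precisely the first displayed inequality of the proposition, so the first assertion requires no additional argument.

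For the ``in particular'' I would establish $p_u^\tau=\tfrac14$ $|Du|$-a.e.\ by a two-sided estimate. The upper bound $p_u^\tau(y)\le\tfrac14$ holds for \emph{every} $y\in\spt(|Du|)$: indeed any cube $Q$ with $y\in\tau Q$ automatically satisfies $\tau Q\cap\spt(|Du|)\neq\emptyset$, so the display above gives $P_u(Q)\le\tfrac14$; taking the supremum over such cubes with $\ell(Q)\le\eps$ and letting $\eps\to0$ yields the claim (this is also exactly the second half of Corollary~\ref{cor:c_tang_below_c_f}, since $p_u^\tau(y)\le p_f^\tau(x)=\tfrac14$). For the matching lower bound I would invoke the universal estimate $p_u^\tau\ge\tfrac14$ valid $|Du|$-a.e.\ — namely \eqref{eq:range_p}, equivalently Remark~\ref{rmk:bounds_g_f} together with Theorem~\ref{thm:p_tau_equals_g} — applied with $u$ in place of $f$. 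Combining the two gives $p_u^\tau=\tfrac14$ at $|Du|$-a.e.\ point.

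I do not expect a genuine obstacle here: the real work has already been spent in Proposition~\ref{prop:tangents_tangents} and Corollary~\ref{cor:c_tang_below_c_f}. The only point needing care is the bookkeeping of the various exceptional null sets — one must pass to the full-$|Df|$-measure subset of $\{p_f^\tau=\tfrac14\}$ on which Corollary~\ref{cor:c_tang_below_c_f} holds, and the lower bound for the ``in particular'' is then applied, for each such tangent $u$, on the corresponding $|Du|$-full set.
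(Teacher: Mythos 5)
Your proof is correct and mirrors the paper's argument almost verbatim: both restrict to the full-$|Df|$-measure set on which Corollary~\ref{cor:c_tang_below_c_f} applies (via Proposition~\ref{prop:tangents_tangents}), read off the displayed inequality and the bound $p_u^\tau\le\tfrac14$ $|Du|$-a.e.\ directly from that corollary, and close with the universal lower bound $p_u^\tau\ge\tfrac14$ $|Du|$-a.e. One cosmetic remark: your justification that $\tau Q\cap\spt(|Du|)\neq\emptyset$ forces $|Du|(Q)>0$ appeals to $\tau Q$ being open, which fails when $\tau=0$; the correct reason in all cases is that $\tau Q\subset Q$ and $Q$ itself is open.
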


\begin{proof}
    Without loss of generality, we can restrict to the set of points $x \in \Omega$ where tangents to tangents are tangents, because this set has full measure by Proposition \ref{prop:tangents_tangents}.
    Fix such an $x \in \{p_f^\tau =\frac14\}$. Then, Corollary~\ref{cor:c_tang_below_c_f} tells us that every  $u\in\Tan^\tau(f,x)$ satisfies the first assertion of the proposition, and also
    \[
    p_u^\tau\le \frac14\quad \text{$|Du|$-almost everywhere.}
    \]
    Since it always holds $p_u^\tau\ge \frac14$ $|Du|$-almost everywhere, the proof is complete.\qedhere 
\end{proof}

\vspace{0.5cm}

\section{A few interesting questions}  Let us now delve into a few intriguing questions about the structure of a function, its Poincar\'e constant and its $\tau$-tangents.
For the rest of this section, we fix $\tau \in [0,1)$ and let $f \in BV_\loc(\Omega)$. 

\subsection{Mono-directionality}
A standard result states that, at small scales around most points, $BV$ functions are $L^1$-close to a one-directional and monotone function. The following result formalizes this in the context of $\tau$-tangents.

\begin{proposition}\label{prop:monodirectional}
    Let $\tau\in[0,1)$. Then 
    for $|Df|$-a.e. point $x\in\Omega$, every tangent $u\in\Tan^\tau(f,x)$ coincides with the restriction to the unit cube $Q_0$ of a function of the form $h(x\cdot e)$ for some $e\in\R^n$ and some monotone function $h:\R\to\R\cup\{\pm\infty\}$.
\end{proposition}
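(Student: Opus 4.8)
The plan is to reduce the statement to the polar decomposition $Df=\sigma_f\,|Df|$, where $\sigma_f\colon\Omega\to\mathbb S^{n-1}$ is the $|Df|$-measurable direction field, and then to show that at a $|Df|$-Lebesgue point of $\sigma_f$ every tangent has derivative equal to a fixed unit vector times a nonnegative measure. First I would apply Theorem~\ref{thm:Lebesgue_diff_tau_cubes_functions} to the scalar maps $z\mapsto|\sigma_f(z)-v|$, $v\in\mathbb S^{n-1}$, to fix a point $x\in\spt(|Df|)$ at which $\fint_Q|\sigma_f-\sigma_f(x)|\,d|Df|\to0$ along every family of cubes with $x\in\tau Q$ and $\ell(Q)\to0$; such points carry full $|Df|$-measure. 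Set $e:=\sigma_f(x)$ and take an arbitrary $u\in\Tan^\tau(f,x)$, say $f_{Q_j}\to u$ in $L^1(Q_0)$ with $x\in\tau Q_j$, $\ell(Q_j)\to0$.

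Next I would blow up the measure $Df$. Writing $T_{Q_j}(y)=x_{Q_j}+\ell(Q_j)O_j\,y$ with $O_j\in O(n)$ the orthogonal part, a chain-rule computation (equivalently, formula~\eqref{eq:push_forward}) identifies the polar vector of $Df_{Q_j}$ with $y\mapsto O_j^{\mathsf T}\sigma_f(T_{Q_j}y)$; since orthogonal maps are isometries, a change of variables gives
\[
\big\|Df_{Q_j}-(O_j^{\mathsf T}e)\,|Df_{Q_j}|\big\|_{\mathcal M(Q_0)}=\int_{Q_0}\big|\sigma_f(T_{Q_j}y)-e\big|\,d|Df_{Q_j}|(y)=\fint_{Q_j}|\sigma_f-e|\,d|Df|\longrightarrow 0.
\]
Passing to a subsequence with $O_j\to O_\infty\in O(n)$ and $|Df_{Q_j}|\weakstarto\mu$ (a probability measure on $\overline{Q_0}$), and using that $Df_{Q_j}\to Du$ in $\mathcal D'(Q_0)$, I would test against $\varphi\in C^1_c(Q_0;\R^n)$ to obtain $\int\varphi\cdot dDu=\int(O_\infty^{\mathsf T}e)\cdot\varphi\,d\mu$, i.e. $Du=\hat e\,\nu$ with $\hat e:=O_\infty^{\mathsf T}e$ a unit vector and $\nu:=\mu\llcorner Q_0\ge0$; boundary mass of $\mu$ is irrelevant because the test functions are compactly supported in $Q_0$.

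Finally I would run the one-dimensional reduction: from $D_w u=(w\cdot\hat e)\,\nu=0$ for every $w\perp\hat e$, mollification shows that $u$ agrees $\Lcal^n$-a.e. on the convex set $Q_0$ with a function of the single variable $t=x\cdot\hat e$, say $u=h(x\cdot\hat e)$ with $h\in BV(I)$ on the projection interval $I:=\{x\cdot\hat e:x\in Q_0\}$. Since $\hat e\cdot Du=\nu\ge0$, the function $h$ coincides a.e. on $I$ with a nondecreasing one, which I would extend to a monotone $h\colon\R\to\R\cup\{\pm\infty\}$ by assigning to it, outside $I$, the one-sided limits at the endpoints of $I$ (these may be infinite, which is precisely why the statement allows values in $\R\cup\{\pm\infty\}$). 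As $\partial Q_0$ is $\Lcal^n$-null, $u$ is the restriction to $Q_0$ of $h(\,\cdot\,\hat e)$, as claimed.

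The step I expect to be the real crux is the uniform-in-$j$ estimate $\|Df_{Q_j}-(O_j^{\mathsf T}e)\,|Df_{Q_j}|\|_{\mathcal M}\to0$: it depends on $x$ being a Lebesgue point of $\sigma_f$ \emph{with respect to $|Df|$} and along the family of \emph{uncentered} $\tau$-cubes, which is exactly what the cited $\tau$-differentiation theorem delivers. Everything else — the weak-$*$ compactness of the blown-up measures, the passage to the limit, and the slicing rigidity for functions whose derivative points in a fixed direction — is routine.
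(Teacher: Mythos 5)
Your proposal is correct and follows essentially the same route as the paper's (very terse) proof: differentiate the polar $\sigma_f := \tfrac{Df}{|Df|}$ at $|Df|$-a.e.\ point using the uncentered $\tau$-Lebesgue theorem, blow up the vector measure $Df$ along the defining sequence of cubes, conclude that $Du$ has a fixed direction, and then slice to obtain $u = h(x\cdot\hat e)$ with $h$ monotone. Two minor points where you are actually more careful than the paper's one-line sketch. First, you cite Theorem~\ref{thm:Lebesgue_diff_tau_cubes_functions} (the functional Lebesgue theorem for uncentered $\tau$-cubes), which is the version genuinely needed to differentiate the $\R^n$-valued density $\sigma_f$, whereas the paper loosely refers to Theorem~\ref{thm:tau_Lebesgue} (the set-density version). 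Second, you explicitly keep track of the orthogonal part $O_j$ of the rescaling map $T_{Q_j}$ and pass to a convergent subsequence $O_j\to O_\infty$, so that the tangent direction comes out as $\hat e = O_\infty^{\mathsf T}\sigma_f(x)$; the paper's remark that the conclusion holds with ``$e = \tfrac{Df}{|Df|}(x)$'' suppresses exactly this rotation (cubes in $\Tan^\tau$ are allowed arbitrary orientation, so the direction of a given tangent is the rotated polar, not $\sigma_f(x)$ itself). In both places your version is the precise one, and the two refinements are the kind of detail the cited reference~\cite[Lemma~4]{ARBDN22} would supply.
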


\begin{proof}
    The proof follows the same lines of \cite[Lemma 4]{ARBDN22}. Indeed, by the uncentered differentiation theorem \ref{thm:tau_Lebesgue}, one obtains that, at $|Df|$-almost all points $x$, all $\tau$-tangents have constant polar. By approximation with smooth functions one obtains the conclusion with $e=\frac{Df}{|Df|}(x)$.
\end{proof}

It is natural to ask if there exist optimal tangents (realizing the Poincar\'e constant in the cell-formula maximization problem appearing in Theorem \ref{thm:p_tau_equal_osc}), whose mono-directionality is  aligned with one of the sides of $Q_0$:

\begin{question}[Alignment of optimal tangents]
    Is it true that for all (or some) tangents $u\in\Tan^\tau(f,x)$ realizing $p_f^\tau(x)$, we have $u(x)=h(x\cdot e)$ for some monotone function $h:\R\to\R$ and where $e$ is one of the coordinate directions? 
\end{question}

A positive answer to this question would indicate the possibility of reducing the proofs presented in this work to the one-dimensional case. This would create a viable pathway for proving Theorem~\ref{thm:p_tau_equals_g} for $\tau=1$ in all dimensions $n>1$. 

\subsection{Poincar\'e constants of tangents} 
We have observed that $\Tan^\tau(f,x)$ is naturally a compact set in $L^1(Q_0)$. We have also discussed that tangents to tangents qualify as tangents themselves. Additionally, Poincaré constants are always confined to take values in the compact interval $[\frac 14,\frac 12]$. Furthermore, the inequality $p_u^\tau \le p_f^\tau(x)$ holds for all tangents $u \in \Tan^\tau(f,x)$, suggesting the possibility of minimizing $p_u^\tau$ within $\Tan(f,x)$. This raises the prospect of finding at least one tangent with minimal and constant local Poincaré constants everywhere: 

\begin{question}\label{Q:minimization} 
    For $|Df|$-almost every $x \in \Omega$, does $\Tan^\tau(f,x)$ contain a tangent $\bar u$ attaining the minimum value 
    \[
          \alpha(x):=\inf_{u \in \Tan^\tau(f,x)} \int_{Q_0} p_u^\tau \, d|Du| = \inf_{u \in \Tan^\tau(f,x)} \Gsf(u,Q_0)\,,
    \]
    and, if so, is $p^\tau_{\bar{u}}$ constant  $|D\bar u|$-almost everywhere?
\end{question}

The key challenge in proving this claim lies in the lack of lower semicontinuity of the functional $\Gsf(\frarg,Q_0)$ with respect to $L^1$-convergence (or weak*-convergence in the $BV$ space for that matter).

The $|Df|$-approximate continuity of $p_f^\tau$ implies that, at $|Df|$-almost every point $x$, $p_f^\tau(x)$ is very close to being constant on small neighborhoods. Since tangents look at the infinitesimal behavior of $f$, this may suggest the existence of a tangent at $x$ whose local Poincar\'e constant is constantly equal to $p_f^\tau(x)$. With this in mind, we formulate the following question:

\begin{question}\label{Q:1} 
    Is it true that, at $|Df|$-almost every point $x\in \Omega$, there exists $u\in \Tan^\tau(f,x)$ such that $p_u^\tau = p_f^\tau(x)$ almost everywhere with respect to $|Du|$?
\end{question}
The answer to this question is affirmative for all points $x$ such that $p^\tau_f(x)\in\{\frac14,\frac12\}$ (cf. Propositions~\ref{prop:rigidity_1/2} and~\ref{prop:rigidity_1/4}). 
In addition, we observe that if one were able to prove that 
\begin{equation}\label{eq:claim_max_G_equal_p}
p^\tau_f(x) = \max_{u \in \Tan^\tau(f,x)}\Gsf(u,Q_0)\,, 
\end{equation}
then Question~\ref{Q:1} would have a positive resolution. Indeed, let $\bar u \in \Tan^\tau(f,x)$ be a maximizer, namely assume that 
\[
p^\tau_f(x) = \max_{u \in \Tan^\tau(f,x)}\Gsf(u,Q_0) = \Gsf(\bar u,Q_0) \,.
\]
Then, by Corollary~\ref{cor:c_tang_below_c_f} 
\begin{align*}
\int_{Q_0} p^\tau_{\bar u}(y) \, d|D\bar u|(y) & \le \int_{Q_0} p^\tau_{f}(x) \, d|D\bar u|(y) \\
& \le p^\tau_{f}(x) =\Gsf(\bar u,Q_0) = \int_{Q_0} p^\tau_{\bar u}(y) \, d|D\bar u|(y)\,.
\end{align*}
It follows that all inequalities are equalities, and thus $p_{\bar u}^\tau(y) = p_f^\tau(x)$ for $|D\bar u|$-almost every point.

At the moment, we are not able to prove~\eqref{eq:claim_max_G_equal_p}. However, we can establish the following weaker result: 
\begin{proposition}
    For $|Df|$-almost every $x$ it holds
    \[
        p_f^\tau(x) = \sup_{u\in\Tan^\tau(f,x)} \Gsf_1(u,Q_0)\,.
    \]
\end{proposition}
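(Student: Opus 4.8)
The inequality ``$\ge$'' is immediate. Given $u\in\Tan^\tau(f,x)$, testing the definition of $\Gsf_1(u,Q_0)$ with the singleton family $\{Q_0\}$ gives $\Gsf_1(u,Q_0)\ge\ell(Q_0)^{n-1}\Osc(u,Q_0)=\Osc(u,Q_0)$. Taking the supremum over $u$ and invoking Lemma~\ref{lemma:m_m1_attained} together with Proposition~\ref{prop:equality_c_sup_osc} (which identify $p_f^\tau(x)$ with $\sup\{\Osc(u,Q_0):u\in\Tan^\tau(f,x)\}$) yields $\sup_u\Gsf_1(u,Q_0)\ge p_f^\tau(x)$ for \emph{every} $x\in\spt(|Df|)$.

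For the reverse inequality I would fix $x$ in the full-measure set where the conclusions of Corollary~\ref{cor:c_tang_below_c_f}, Proposition~\ref{prop:monodirectional}, Proposition~\ref{prop:tangents_tangents} and Proposition~\ref{prop:rigidity_1/2} all hold, and prove $\Gsf_1(u,Q_0)\le p_f^\tau(x)$ for every $u\in\Tan^\tau(f,x)$. Given any family $\Hcal$ of essentially disjoint cubes in $Q_0$, write
\[
\sum_{Q\in\Hcal}\ell(Q)^{n-1}\Osc(u,Q)=\sum_{\substack{Q\in\Hcal\\ |Du|(Q)>0}}P_u(Q)\,|Du|(Q),\qquad P_u(Q):=\frac{\ell(Q)^{n-1}\Osc(u,Q)}{|Du|(Q)}=\Osc(u_Q,Q_0),
\]
and observe that $\sum_{Q\in\Hcal}|Du|(Q)\le|Du|(Q_0)\le1$ because the cubes are disjoint and $u$ is a tangent. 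Hence it suffices to show $P_u(Q)\le p_f^\tau(x)$ for every cube $Q\subseteq Q_0$ with $|Du|(Q)>0$. When $\tau Q\cap\spt(|Du|)\ne\emptyset$ this is exactly \eqref{eq:bound_osc_p_tau} in Corollary~\ref{cor:c_tang_below_c_f}. It remains to treat the cubes with $|Du|(Q)>0$ and $\tau Q\cap\spt(|Du|)=\emptyset$, for which $u$ is (Lebesgue-a.e.) constant on $\tau Q$. Here I would use Proposition~\ref{prop:monodirectional}: at $|Df|$-a.e.\ $x$ every $u\in\Tan^\tau(f,x)$ has the form $u=h(y\cdot e)$ on $Q_0$ with $e=\tfrac{Df}{|Df|}(x)$ and $h$ monotone; a slicing computation reduces $P_u(Q)$ to a one-dimensional Poincar\'e quotient $\Osc(h,J)/|Dh|(J)$ of $h$ over a subinterval $J\subset(-\tfrac12,\tfrac12)$ on which $h$ is constant on a central portion (an ``off-center'' interval), and one checks that the supremum of these one-dimensional quotients over all subintervals is already attained on the ``centered'' intervals $J$, i.e.\ those on which $h$ is non-constant near the midpoint. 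To establish this last reduction I would split into two cases: if $h$ has a jump, then $u$ has a jump hyperplane and blowing up $u$ at a point of it along concentric cubes produces a half-cube jump function in $\Tan^\tau(f,x)$ (via Proposition~\ref{prop:tangents_tangents}), whence $p_f^\tau(x)=\tfrac12$ by Proposition~\ref{prop:rigidity_1/2} and the bound is trivial; if $h$ is continuous, I would dominate the off-center quotient $\Osc(h,J)/|Dh|(J)$ by the quotient of a centered subinterval $J'$ of $(-\tfrac12,\tfrac12)$ concentric with a point of $\spt(|Dh|)$ on which $h$ is essentially a flat--ramp--flat profile; such a $J'$ is of the already-treated ``centered'' type, so its quotient is $\le p_f^\tau(x)$, and enlarging $J'$ upgrades this to the claim.

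The main obstacle is precisely the last step: controlling the ``off-center'' Poincar\'e quotients for continuous monotone profiles, that is, showing that for a $\tau$-tangent $u$ the supremum of $\Osc(u_Q,Q_0)$ over all subcubes $Q\subseteq Q_0$ is attained in the limit on cubes with $\tau Q\cap\spt(|Du|)\ne\emptyset$. All the remaining ingredients --- the reduction via $\sum_Q|Du|(Q)\le1$, the mono-directional reduction to one dimension, and the jump case via the rigidity in Proposition~\ref{prop:rigidity_1/2} --- are soft consequences of results already established.
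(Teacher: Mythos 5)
Your lower bound coincides with the paper's: test $\Gsf_1$ with the singleton family $\{Q_0\}$ and invoke the cell formula $p_f^\tau(x)=\sup_{u\in\Tan^\tau(f,x)}\Osc(u,Q_0)$ from Theorem~\ref{thm:p_tau_equal_osc}.

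For the upper bound you take a different route. Your reduction --- rewrite $\sum_{Q\in\Hcal}\ell(Q)^{n-1}\Osc(u,Q)$ as $\sum P_u(Q)\,|Du|(Q)$ and use $\sum_{Q\in\Hcal}|Du|(Q)\le|Du|(Q_0)\le1$ --- is sound, but because you test with an \emph{arbitrary} family $\Hcal$ you are forced to prove the pointwise estimate $P_u(Q)\le p_f^\tau(x)$ for \emph{every} subcube $Q\subseteq Q_0$ with $|Du|(Q)>0$, including those with $\tau Q\cap\spt(|Du|)=\emptyset$; that is exactly where you get stuck. The paper sidesteps this case entirely: it invokes Proposition~\ref{prop:good_family} (with $\eps=1$) to pick a $\delta$-almost maximizing \emph{good} family $\Fcal\in\Hcal_{\le 1}(Q_0)$ in which every cube already satisfies $|Du|(\tau Q)\ge\tfrac{1}{16}|Du|(Q)>0$. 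This guarantees $\tau Q\cap\spt(|Du|)\ne\emptyset$ for all $Q\in\Fcal$, so Corollary~\ref{cor:c_tang_below_c_f} applies to every cube in the family, yielding $(1-\delta)\Gsf_1(u,Q_0)\le\sum_{Q\in\Fcal}p_f^\tau(x)|Du|(Q)\le p_f^\tau(x)$, and the bound follows as $\delta\downarrow0$.

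The case you are missing is actually easy and does not require the mono-directional slicing and jump/continuous dichotomy you sketch. If $\tau Q\cap\spt(|Du|)=\emptyset$ then $|Du|(\tau Q)=0$, and Corollary~\ref{corollary:burrata} (the $\eta=0$ case of Lemma~\ref{lemma:almost_constant}, valid for $\tau\ge\tau(n)$ --- the same implicit restriction under which the paper's own proof, via Proposition~\ref{prop:good_family}, operates) gives $\Osc(u,Q)\ell(Q)^{n-1}<\tfrac18|Du|(Q)$, so $P_u(Q)<\tfrac18<\tfrac14\le p_f^\tau(x)$ by Corollary~\ref{cor:range_p}. So the statement you need is true, but the slicing/blow-up machinery in your last step is unnecessary: the ``off-center'' quotients are not large objects you must dominate by centered ones --- they are automatically small. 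As written, however, your proof is incomplete, since you flag this final step as an unresolved obstacle rather than closing it.
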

\begin{proof}
    Fix $u \in \Tan^\tau(f,x)$. By the  definition of $\Gsf_1$, we have 
    \[
    \Gsf_1(u,Q_0) \ge \Osc(u,Q_0)
    \]
    because the unit cube belongs to $\Hcal_{\le 1}(Q_0)$. Taking the supremum over all tangents and applying Theorem~\ref{thm:p_tau_equal_osc}, we deduce the inequality
    $$
    \sup_{u\in\Tan^\tau(f,x)} \Gsf_1(u,Q_0) \ge p_f^\tau(x)\,.
    $$
    The other inequality will follow  
    from Proposition~\ref{prop:good_family} and  Corollary~\ref{cor:c_tang_below_c_f}. Let $u \in \Tan^\tau(f,x)$ and let $\delta>0$. 
    We apply Proposition~\ref{prop:good_family} to find a good family $\Fcal \in \Hcal_{\le 1}(Q_0)$ for $\Gsf(u,Q_0)$. Notice that, for any cube $Q$, if $|Du|(Q)=0$ then $\Osc(u,Q)=0$. Therefore we can assume without loss of generality that  $|Du|(Q)>0$ for all cubes $Q \in \Fcal$. 
    Further, by the definition of good family, for every $Q \in \Fcal$ it holds
    \[
    |Du|(\tau Q) \ge \frac{1}{16}|Du|(Q)>0\,.
    \]
    This means that $\spt(|Du|) \cap \tau Q \ne \emptyset$ for all $Q \in \Fcal$, which
    allows us to apply Corollary~\ref{cor:c_tang_below_c_f} 
    for each cube of $\Fcal$: 
    \begin{align*}
        (1-\delta)\Gsf_1(u,Q_0) & \le \sum_{Q \in \Fcal} \Osc(u,Q)\ell(Q)^{n-1} \\ 
        & \!\!\!\overset{\eqref{eq:bound_osc_p_tau}}{\le} \sum_{Q \in \Fcal} p_f^\tau(x) |Du|(Q) \le p_f^\tau(x)|Du|(Q_0) \le p_f^\tau(x).
    \end{align*}
   Letting $\delta \downarrow 0$ first, and then taking the supremum over all tangents, we obtain the other inequality and we conclude the proof.
\end{proof}

Appealing to the same line of thought of the previous proof, one can show that the following inequalities hold at $|Df|$-almost every point: for every $\eps\in (0,1)$ and every $u\in\Tan^\tau(f,x)$
\[
\Gsf(u,Q_0)\le \Gsf_\eps(u,Q_0)\le \Gsf_1(u,Q_0)\le p_f^\tau(x).
\]
Motivated by Question~\ref{Q:minimization} and Question~\ref{Q:1}, one may also wonder whether the following weaker conclusion holds:
\begin{question}\label{question:p_u_constant}
    Is it true that, at $|Df|$-almost every point $x\in \Omega$, there exists $u\in \Tan^\tau(f,x)$ such that $p_u^\tau$ is constant almost everywhere with respect to $|Du|$?
\end{question}

While a conclusive answer remains elusive at present, we are able to establish the following result, which gives a partial answer to Question \ref{question:p_u_constant}:

\begin{proposition}[Almost minimizing tangents]\label{prop:almost_constant_inf} 
For $x \in \Omega$, let 
\begin{equation}\label{eq:def_beta}
\beta(x) :=\inf \{|Du|\text{-}\mathrm{essinf}\, p_u^\tau: \, u\in\Tan^\tau(f,x)\}\,.
\end{equation}
For $|Df|$-almost every point $x \in \Omega$, it holds 
\[
\frac 14 \le \beta(x) \le \frac 12\,.
\]
Moreover, at such points, 
for each $\delta >0$ there exists a non-trivial tangent $v \in \Tan^\tau(f,x)$ satisfying
\[
 \beta(x) \le p_v^\tau(y) \le \beta(x) + \delta
\]
for $|Dv|$-almost every $y \in Q_0$. 
    \end{proposition}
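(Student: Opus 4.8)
My plan is to treat the two bounds on $\beta$ as essentially bookkeeping consequences of the uniform range $\tfrac14\le p_u^\tau\le\tfrac12$ for Poincar\'e constants of $BV$ functions, together with the non-triviality of tangents, and to devote the real work to producing the almost-minimizing tangent $v$ via a \emph{single extra blow-up}, exploiting that tangents of tangents are tangents (Proposition~\ref{prop:tangents_tangents}) and that one blow-up can only lower the Poincar\'e constant pointwise (Corollary~\ref{cor:c_tang_below_c_f}).

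First I would fix a point $x$ in the full $|Df|$-measure set where (a) $\Tan^\tau(f,x)\neq\{0\}$ (Corollary~\ref{cor:tangent_non_trivial}) and (b) the conclusion of Proposition~\ref{prop:tangents_tangents}(ii) holds for $f$. Taking any non-zero $u_0\in\Tan^\tau(f,x)$ and using $p_{u_0}^\tau\le\tfrac12$ on $\spt(|Du_0|)$ (Poincar\'e inequality together with $p_{u_0}^\tau\le p_{u_0}^1$) gives $\beta(x)\le|Du_0|\text{-}\mathrm{essinf}\,p_{u_0}^\tau\le\tfrac12$; and using the range bound $p_u^\tau\ge\tfrac14$ at $|Du|$-a.e.\ point, valid for \emph{every} non-zero $u\in\Tan^\tau(f,x)$ (cf.~\eqref{eq:range_p_body}), gives $|Du|\text{-}\mathrm{essinf}\,p_u^\tau\ge\tfrac14$, hence $\beta(x)\ge\tfrac14$.

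For the construction, fix $\delta>0$. By definition of $\beta(x)$ I would pick $u_1\in\Tan^\tau(f,x)$ with $|Du_1|\text{-}\mathrm{essinf}\,p_{u_1}^\tau\le\beta(x)+\tfrac{\delta}{2}$ (so necessarily $u_1\neq0$); then $\{y:p_{u_1}^\tau(y)<\beta(x)+\delta\}$ has positive $|Du_1|$-measure, for otherwise the essential infimum would be at least $\beta(x)+\delta$. Intersecting this set with $\spt(|Du_1|)$ and with the full $|Du_1|$-measure sets where Corollary~\ref{cor:c_tang_below_c_f} and Corollary~\ref{cor:tangent_non_trivial} hold \emph{for $u_1$} (viewed as a $BV$ function on $Q_0$), I would pick a point $y_1$ in the intersection and a non-zero $v\in\Tan^\tau(u_1,y_1)$. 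Then: $v\in\Tan^\tau(f,x)$ by Proposition~\ref{prop:tangents_tangents}(ii) (since $y_1\in\spt(|Du_1|)$); $p_v^\tau(z)\le p_{u_1}^\tau(y_1)<\beta(x)+\delta$ for every $z\in\spt(|Dv|)$ by Corollary~\ref{cor:c_tang_below_c_f} applied to $u_1$ at $y_1$; and $|Dv|\text{-}\mathrm{essinf}\,p_v^\tau\ge\beta(x)$ because $v\in\Tan^\tau(f,x)$ is non-zero and $\beta(x)$ is an infimum over exactly such quantities. Combining the last two estimates yields $\beta(x)\le p_v^\tau(z)\le\beta(x)+\delta$ for $|Dv|$-almost every $z\in Q_0$, which is the assertion.

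The genuine obstacle is that the map $u\mapsto|Du|\text{-}\mathrm{essinf}\,p_u^\tau$ (equivalently $u\mapsto\Gsf(u,Q_0)$) is not lower semicontinuous under $L^1$-convergence of blow-ups --- this is precisely the difficulty underlying Question~\ref{Q:minimization} --- so the infimum defining $\beta(x)$ cannot be realized by a naive compactness argument. The device that circumvents this is that a single further blow-up simultaneously (i) remains inside $\Tan^\tau(f,x)$ and (ii) can only decrease $p^\tau$ pointwise, which lets us upgrade ``the essinf of $p_{u_1}^\tau$ is close to $\beta(x)$'' into ``$p_v^\tau$ is \emph{everywhere} at most $\beta(x)+\delta$''. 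The remaining care is purely measure-theoretic: the good sets for $u_1$ depend on $u_1$, but $u_1$ is frozen once $\delta$ is chosen, so no diagonalization over a sequence of tangents is needed.
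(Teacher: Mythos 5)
Your proposal is correct and follows essentially the same route as the paper's own proof: restrict to the full-measure set where non-triviality of tangents, Proposition~\ref{prop:tangents_tangents}, and Corollary~\ref{cor:c_tang_below_c_f} all hold; read off the bounds $\frac14\le\beta(x)\le\frac12$ from Corollary~\ref{cor:range_p}; pick $u$ with $|Du|$-$\mathrm{essinf}\,p_u^\tau$ close to $\beta(x)$; choose $y\in\spt(|Du|)$ where $p_u^\tau(y)$ is small, tangents of $u$ at $y$ are non-trivial, and Corollary~\ref{cor:c_tang_below_c_f} applies to $u$; and take a non-trivial $v\in\Tan^\tau(u,y)\subset\Tan^\tau(f,x)$, for which the upper bound on $p_v^\tau$ comes from Corollary~\ref{cor:c_tang_below_c_f} and the lower bound from the definition of $\beta(x)$. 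The only cosmetic difference is that you cite Corollary~\ref{cor:tangent_non_trivial} for $u$ explicitly (and use a $\delta/2$ margin), whereas the paper packages the choice of $y$ slightly more tersely — the underlying argument is identical.
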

    \begin{proof} Let $A\subset \spt(|Df|)$ be the set of points $x$ where tangents to  tangents are tangents, Corollary \ref{cor:c_tang_below_c_f} applies, and $\Tan^\tau(f,x) \neq \{0\}$, which has full $|Df|$ measure (cf.  Proposition~\ref{prop:tangents_tangents} and Corollary~\ref{cor:tangent_non_trivial}). 
     Observe that, thanks to Corollary~\ref{cor:range_p}, we deduce that $\beta(x) \in [\frac14,\frac 12]$ for every $x \in A$. This proves the first assertion.

    For $x \in A$ and $\delta > 0$, we may hence find a non-trivial tangent  $u\in\Tan^\tau(f,x)$ satisfying 
    \[
    |Du|\text{-}\mathrm{essinf}\, p_u^\tau <\beta(x)+\delta.
    \]
    Choose $y \in \spt(|Du|)$ such that $\Tan^\tau(u,y) \neq \{0\}$ and  $p_u^\tau(y)\le \beta(x)+\delta$. Now, let $v\in\Tan^\tau(u,y)\subset \Tan^\tau(f,x)$ be a non-trivial tangent. Then, by Corollary \ref{cor:c_tang_below_c_f} we have $p_v^\tau\le p_u^\tau(y)\le\beta(x)+\delta$ at $|Dv|$-almost all points. On the other hand, by the definition of $\beta(x)$, we have $p_v^\tau\ge \beta(x)$ at $|Dv|$-almost all points. 
    This proves the second assertion.
    \end{proof}

    \begin{remark} If the minimization problem posed in Question~\ref{Q:minimization} has a minimizer $\bar u$ in $\Tan^\tau_1(f,x)$, then 
    $\alpha(x)=\beta(x)$, where $\beta(x)$ is defined in \eqref{eq:def_beta}. Indeed, let $\bar u \in \Tan^\tau_1(f,x)$ be the minimizer for $\alpha(x)$ and let $\tilde{u}$ be a tangent given by Proposition \ref{prop:almost_constant_inf}. Then, by minimality 
    \begin{align*}
    \alpha(x) & = \Gsf(\bar u, Q_0) \le \Gsf(\tilde u,Q_0) = \int_{Q_0} p_{\tilde{u}}^\tau \, d|D\tilde{u}| \le \beta(x) + \delta.
    \end{align*}
    On the other hand, by the definition of $\beta(x)$, it holds $\beta(x)\le p_{\bar u}^\tau$ almost everywhere with respect to $|D\bar u|$, hence by integration 
    \[
    \beta(x) \le \int_{Q_0} p_{\bar u}^\tau\, d|D\bar{u}| = \Gsf(\bar u, Q_0) = \alpha (x).
    \]
    Sending $\delta \to 0$ we conclude that $\alpha(x)=\beta(x)$, as we wanted. Moreover, in this case it follows that $p_{\bar u}^\tau$ is constant and equal to $\alpha(x)$ almost everywhere with respect to $|D\bar u|$, thus answering Question \ref{question:p_u_constant}. 
    \end{remark}

\subsection{Rigidity at points with minimal Poincar\'e constant} As we have already seen in Propositions \ref{prop:rigidity_1/2} and \ref{prop:rigidity_1/4}, there are certain additional properties of $\Tan^\tau(f,x)$, when $p_f^\tau(x)$ attains an extremal value. We also know that affine functions are a prototype of a mono-directional monotone function with local Poincar\'e constant equal to $\frac 14$ almost everywhere. In fact, if $u : Q_0 \to \R$ is linear, then 
\[
    \frac{\Osc(u,Q)}{|Du|(Q)}\,\ell(Q)^{n-1} = \frac 14
\]
for all subcubes $Q \subset Q_0$. In this context, the following question probes the validity of a converse statement, specifically exploring whether the upper bound $\frac 14$ on such quotients enforces affinity:
\begin{question}\label{Q:1/4}
    Let $u : Q_0 \to \R$ be a mono-directional and monotone $BV$ function. Does the inequality
    \[
    \Osc(u,Q)\ell(Q)^{n-1}\le \frac14 |Du|(Q),
    \]
    for all cubes $Q \subset Q_0$, 
    imply that $u$ is affine? If not, could we reach this conclusion by imposing the stricter equality condition 
    \[
    \Osc(u,Q)\ell(Q)^{n-1} = \frac14 |Du|(Q)
    \]
    for all cubes $Q \subset Q_0$?
\end{question}

\begin{remark}
    Without the monotonicity assumption, the answer to the first part of Question \ref{Q:1/4} is negative: a direct computation shows that the function $u(x)=|x|$, defined on $(-\frac12,\frac12)$, satisfies $\Osc(u,I)\le \frac14 |Du|(I)$ for every interval $I\subseteq (-\frac12,\frac12)$. 
\end{remark}

Proposition \ref{prop:rigidity_1/4} underscores the significance of this question: A positive resolution of either claim could lead to a positive answer to the following questions:

\begin{question}\label{question:B_implies_D} 
    Is it true that for $|Df|$-almost every $x \in \{p_f^\tau =\frac14\}$, there \emph{exists} a linear tangent in $\Tan^\tau(f,x)$?
\end{question}

\begin{question}\label{question:B_implies_A}  
Is it true that for $|Df|$-almost every $x \in \{p_f^\tau =\frac14\}$, \emph{every} tangent in $\Tan^\tau(f,x)$ is linear?
\end{question}

Lastly, we explore a potential converse implication in Proposition~\ref{prop:rigidity_1/4}.

\begin{question}\label{question:C_implies_B}
    Does $|Df|$-almost every point $x \in \Omega$ have the following property: if
     \[
     p_u^\tau= \frac14 \quad \text{$|Du|$-almost everywhere}
     \]
     for every tangent $u\in \Tan^\tau(f,x)$, then $p_f^\tau(x)=\frac14$? 
\end{question}

\subsection{Size of points with minimal Poincar\'e constant} 
By standard measure-theoretic arguments concerning the \emph{invariance directions} of tangent measures (see~\cite{Adolfo_PAMS}; see also~\cite{Ambrosio_Soner,Fragala_Mantegazza}), the following dimensional estimate holds: the set of points where all tangents of $f$ are linear has full dimension (provided it has positive $|Df|$-measure). This suggests the following question, 
which would have a positive answer in case Question \ref{question:B_implies_A} is solved affirmatively:

\begin{question}\label{question:last}
    Provided that it has positive $|Df|$-measure, does the set
    $\{p_f^\tau=\tfrac14\}$ 
    have Hausdorff dimension $n$?
\end{question}

\appendix
\vspace{0.5cm}
\section{Some measure theoretic results}
\label{app:measure_theory}
\addtocontents{toc}{\SkipTocEntry}
\subsection{A covering Theorem in \texorpdfstring{$\R$}{R}}

We recall some definitions. Let $A\subset \R$. A \emph{cover} of $A$ is any family of sets $\Fcal$ such that for every $x\in A$ there exists $U\in\Fcal$ with $x\in U$. We call $\Fcal(x)$ the collection of sets from $\Fcal$ that contain $x$.
We say that $\Fcal$ is a \emph{fine cover} of $A$ if for every $x\in A$ and for every $\delta>0$ there exists $U\in\Fcal(x)$ with $\mathrm{diam}(U)<\delta$.

\begin{definition}[Vitali property]\label{def:Vitali}
    Given a cover $\Fcal$ of $A$, and a Radon measure $\mu$, we say that $\Fcal$ has the $\mu$-\emph{Vitali property} if there exists a disjoint collection $\Fcal'\subset\Fcal$ such that
\[
\mu\left(A\setminus\bigcup_{U\in\Fcal'} U\right)=0.
\]
\end{definition}
It is well-known (see e.g. \cite[Theorem 2.2]{mattila}) that any fine cover $\Fcal$ of $A$ with closed balls (also uncentered ones) has the $\Lcal^n$-Vitali property. In the one dimensional case, relying on the natural order relation on the real line, one can actually show the same for every measure $\mu$. This observation can be found in  \cite[Chapter 1, Remark 5]{deguzman} along with some hints for its proof. For completeness, we report here a detailed argument.

\begin{theorem}[Vitali with closed intervals in $\R$]\label{thm:vitali_general}
Let $\mu$ be any Radon measure on the real line $\R$. Then every fine cover $\Fcal$ of a set $E$ with closed intervals has the $\mu$-Vitali property.
\end{theorem}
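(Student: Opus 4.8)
The plan is to prove this by a greedy selection argument, exploiting the linear order of $\R$ rather than the usual $5r$-covering trick (which would require control on the measure of enlarged intervals, something we do not have for a general Radon measure $\mu$). First I would reduce to the case where $E$ is bounded, say $E \subset (a,b)$ with $\mu((a,b)) < \infty$: the general statement follows by covering $\R$ with countably many disjoint bounded open intervals $I_m$, discarding from $\Fcal$ every interval not contained in some $I_m$ (a fine cover of $E$ restricted to $E \cap I_m$ remains fine since we may always shrink), and taking the union of the disjoint subcollections obtained on each $E \cap I_m$; the leftover set $E \cap \partial I_m$ is countable, hence $\mu$-null after further adjusting the $I_m$ so that $\mu(\partial I_m)=0$. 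So assume $E \subset (a,b)$ and $\Fcal$ consists of closed intervals contained in $(a,b)$, still a fine cover of $E$.

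Next I would run the greedy construction. Suppose disjoint intervals $J_1,\dots,J_k \in \Fcal$ have been chosen. Let $R_k := (a,b) \setminus \bigcup_{i=1}^k J_i$, an open set, hence a countable disjoint union of open intervals. If $\mu(E \setminus \bigcup_{i=1}^k J_i) = 0$ we stop. Otherwise, since $\Fcal$ is a \emph{fine} cover, every point of $E \cap R_k$ is contained in intervals of $\Fcal$ of arbitrarily small diameter, in particular in intervals contained in $R_k$; thus the collection $\Fcal_k := \{J \in \Fcal : J \subset R_k\}$ still covers $E \cap R_k$. Let $\sigma_k := \sup\{\mu(J) : J \in \Fcal_k\}$, which is strictly positive (if it were zero, every $J \in \Fcal_k$ would be $\mu$-null; covering $E \cap R_k$ by countably many such $J$ — possible by Lindelöf — would force $\mu(E\cap R_k)=0$, a contradiction). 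Choose $J_{k+1} \in \Fcal_k$ with $\mu(J_{k+1}) > \sigma_k / 2$. This produces a disjoint sequence $(J_k)_{k\ge 1} \subset \Fcal$ (terminating, in which case we are done, or infinite).

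Finally I would show $\mu(E \setminus \bigcup_k J_k) = 0$ in the non-terminating case. Since the $J_k$ are disjoint and contained in $(a,b)$ with $\mu((a,b)) < \infty$, we have $\sum_k \mu(J_k) \le \mu((a,b)) < \infty$, so $\mu(J_k) \to 0$ and hence $\sigma_k \to 0$. Fix $x \in E \setminus \bigcup_k J_k$ and let $\varepsilon > 0$; I claim $x$ lies in some fixed "slightly enlarged" null-measure-controlled set. More precisely: since $\mu$ is Radon and $\sum_k \mu(J_k) < \infty$, the tail $\sum_{k > N} \mu(J_k) \to 0$; choose $N$ with $\sum_{k>N} \mu(J_k) < \varepsilon$. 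The point $x$ does not belong to any $J_i$; since $\Fcal$ is a fine cover, there is $J \in \Fcal$ with $x \in J$ and $J$ disjoint from the finitely many intervals $J_1,\dots,J_N$ (possible because $x \notin \bigcup_{i\le N} J_i$ which is closed, so $x$ has a neighborhood missing it, and fineness gives $J$ inside that neighborhood). Now $J$ must meet some $J_m$ with $m > N$: otherwise $J$ would belong to every $\Fcal_k$, forcing $\mu(J) \le \sigma_k \to 0$, so $\mu(J)=0$; but then $J \subset R_k$ for all $k$, and moreover at the first stage $k_0$ where $J$ was a candidate we would need $\mu(J) > \sigma_{k_0}/2 \ge \mu(J)/2$ — rather, the cleanest phrasing: if $J$ met no $J_m$ at all then $J \in \Fcal_k$ for every $k$, so $\mu(J_{k+1}) > \sigma_k/2 \ge \mu(J)/2$ for all $k$, contradicting $\mu(J_k)\to 0$ unless $\mu(J) = 0$; and a $\mu$-null $J$ containing $x$ contributes nothing. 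Hence, up to a $\mu$-null set, $x$ lies in the union of intervals $J$ each meeting some $J_m$, $m>N$, with $\mathrm{diam}(J) \le 2\,\mathrm{diam}(J_m)$ — here is where the one-dimensional order enters: if a closed interval $J$ meets $J_m$ and $\mu(J) \le \sigma_{m-1} < 2\mu(J_m)$, one uses that $J$ is contained in the interval concentric with $J_m$ of triple the length, i.e. $J \subset 3 J_m$ \emph{as sets}, but since we only control $\mu$ we instead cover $E \setminus \bigcup_k J_k$ by $\bigcup_{m>N} (\text{the component of } R_{m-1} \text{ abutting } J_m)$; more carefully, $x \in E \setminus \bigcup J_k$ lies in $R_m$ for all $m$, and the above shows $x$ is a boundary-adjacent point of some $J_m$, $m > N$, so $x \in \overline{J_m^{\pm}}$ where $J_m^\pm$ are the two gaps of $R_{m}$ adjacent to $J_m$; bounding crudely, $E \setminus \bigcup_k J_k \subset \bigcup_{m>N} 5 J_m \cup Z$ with $Z$ $\mu$-null, hence $\mu\big(E \setminus \bigcup_k J_k\big) \le \sum_{m>N}\mu(5 J_m)$. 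The main obstacle — and the only genuinely delicate point — is precisely this last containment: for a general Radon $\mu$ we \emph{cannot} bound $\mu(5J_m)$ by $C\mu(J_m)$, so the standard argument breaks. The fix, which is the heart of the proof and what I would spend the most care on, is to not enlarge intervals at all but to argue directly: at the step where the candidate interval $J$ through $x$ first fails to be selected, it must overlap the chosen $J_m$, and because everything is one-dimensional and nested, $J$ lies inside the union $J_{m-1}^{\mathrm{left}} \cup J_m \cup J_m^{\mathrm{right}}$ of $J_m$ with its two neighboring gaps; iterating over all $m > N$ and using that the gaps shrink, one gets $\mu(E \setminus \bigcup_k J_k) \le \sum_{m > N} \mu(\widehat J_m)$ where $\widehat J_m$ is this bounded local enlargement — but crucially $\sum_m \mu(\widehat J_m)$ is controlled because the $\widehat J_m$ have bounded overlap as subsets of $(a,b)$ and, by choosing the enlargement to consist of \emph{gaps already present in $R_{m}$} plus $J_m$ itself, one stays within $(a,b)$ with overlap multiplicity $\le 3$, giving $\sum_m \mu(\widehat J_m) \le 3\mu((a,b)) < \infty$ and a convergent tail. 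Letting $N \to \infty$ yields $\mu(E \setminus \bigcup_k J_k) = 0$, completing the proof. I would, in the write-up, follow the clean treatment hinted at in \cite[Chapter 1, Remark 5]{deguzman}, which organizes exactly this bounded-overlap bookkeeping.
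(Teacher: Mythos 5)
Your greedy selection and the inclusion $E\setminus\bigcup_{k\le h}J_k\subseteq\bigcup_{m>h}\widehat J_m$ match the paper's strategy, and you correctly identify that the genuinely delicate step is controlling the measure of the enlargements, since for a general Radon $\mu$ one cannot bound $\mu(5J_m)$ by $C\mu(J_m)$. But the fix you propose does not work: defining $\widehat J_m$ as $J_m$ together with its two neighboring gaps in $R_{m-1}$ (or $R_m$) gives neither a bound $\mu(\widehat J_m)\lesssim\mu(J_m)$ nor bounded overlap multiplicity. For the overlap claim, consider selecting intervals that march monotonically leftward, each lying inside the leftmost gap adjacent to the previous one: then the left gap of $R_m$ adjacent to $J_m$ contains $J_{m'}$ and its adjacent gap for every $m'>m$, so the sets $\widehat J_1\supset\widehat J_2\supset\cdots$ are nested and a single point can lie in all of them. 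And $\mu$ of those gaps is not controlled by $\mu(J_m)$ at all — it can stay bounded away from zero if $\mu$ concentrates far from the $J_m$'s — so $\sum_m\mu(\widehat J_m)$ need not converge.

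The correct enlargement is smaller: take $\widehat J_m$ to be the union of those intervals $S\in\Fcal$ that meet $J_m$ \emph{and satisfy} $\mu(S)\le 2\mu(J_m)$. (Every candidate $J$ through a leftover point that first meets the selection at step $m$ has $\mu(J)\le\sigma_{m-1}<2\mu(J_m)$, so it does land inside this $\widehat J_m$ and the inclusion you want still holds.) Now the one-dimensionality finally pays off in the way it must: this $\widehat J_m$ is a union of closed intervals all meeting $J_m$, hence is itself an interval, and it is covered (up to endpoints, handled by inner regularity) by a leftmost admissible interval $I_L$, by $J_m$, and by a rightmost admissible interval $I_R$; since $\mu(I_L),\mu(I_R)\le 2\mu(J_m)$, you get $\mu(\widehat J_m)\le 5\mu(J_m)$. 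Then $\sum_m\mu(\widehat J_m)\le 5\sum_m\mu(J_m)\le 5\mu((a,b))<\infty$ (disjointness of the $J_m$), the tail vanishes, and the proof closes. So the key idea you were missing is to impose the measure constraint $\mu(S)\le 2\mu(J_m)$ on the intervals forming the enlargement, rather than using geometric gap-neighbourhoods, and to exploit the fact that a union of overlapping closed intervals on a line is dominated in $\mu$-measure by its two extremal members plus the core.
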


\begin{proof}
We can assume without loss of generality that $E\subseteq (0,1)$, and that every element of the cover is contained in $(0,1)$. Moreover, we can assume that $E\subseteq \spt(\mu)$, and therefore that $\mu(I)>0$ for every $I\in \Fcal$.

We start by selecting $S_1\in \Fcal$ such that
\[
\mu(S_1)\ge \frac12 \sup\{\mu(I):\, I\in \Fcal\}.
\]
Then we inductively select $S_k$, $k\ge 2$, so that
\[
S_k\cap\bigcup_{i=1}^{k-1} S_i=\emptyset
\]
and
\[
\mu(S_k)\ge \frac12 \sup\left\{\mu(I):\, I\in\Fcal, \, I\cap\bigcup_{i=1}^{k-1} S_i=\emptyset\right\}.
\]
If the process terminates in a finite number of steps then we are done, as $\mu$-almost all the set $E$ must be covered. Suppose then that the process goes on, giving an infinite sequence $S_k$, $k\ge 1$. Since the $S_k$'s are disjoint and $(0,1)$ has finite $\mu$-measure, we must have 
\begin{equation}\label{eq:S_k_to_0}
\mu(S_k)\to 0 \qquad\text{as $k\to\infty$.}
\end{equation}
We now claim that 
\begin{equation}\label{eq:I_empty_intersection}
    I\cap \bigcup_{k=1}^\infty S_k\neq \emptyset\qquad\text{for every $I\in \Fcal$}.
\end{equation}
Indeed, otherwise there would exist $I\in\Fcal$ disjoint from all the $S_k$, but then it must have been selected at some point in the process because of \eqref{eq:S_k_to_0} (recall that $\mu(I)>0$ for every $I\in \Fcal$). Therefore \eqref{eq:I_empty_intersection} is proven.

Now we prove that 
\[
\mu\left(E\setminus \bigcup_{k=1}^\infty S_k\right)=0.
\]
For every $k\ge 1$, let us define the \emph{enlarged} set 
\[
\widehat S_k:=\bigcup \{S\in\Fcal:\, \mu(S)\le 2\mu(S_k),\, S\cap S_k\neq\emptyset\}.
\]
We have the following chain of inclusions:
\begin{align}
\begin{aligned}\label{eq:S_k_chain}
    E\setminus \bigcup_{k=1}^h S_k& \subseteq \bigcup \{S\in \Fcal:\, S\cap \bigcup_{k=1}^hS_k =\emptyset\}\\
    & = \bigcup \{S\in \Fcal:\, S\cap \bigcup_{k=1}^h S_k =\emptyset,\, S\cap \bigcup_{k=h+1}^\infty S_k\neq\emptyset \}\\
    & =\bigcup_{j=h}^\infty \{S\in\Fcal:\, \bigcup_{k=1}^j S_k =\emptyset,\, S\cap S_{j+1}\neq\emptyset \}\\
    &\subseteq \bigcup_{j=h}^\infty \widehat S_{j+1}.
\end{aligned}
\end{align}
The final step is to show that 
\begin{equation}\label{eq:5mu(S_k)}
    \mu(\widehat S_k)\le 5 \mu(S_k).
\end{equation}
This is sufficient to conclude in view of \eqref{eq:S_k_chain}, and sending $h\to\infty$. 

Now \eqref{eq:5mu(S_k)} follows from the following inequality:
\[
\mu(\widehat I)\le \mu(I)+\sup\{\mu(I_L)+\mu(I_R):\,I_L,I_R\cap I\neq \emptyset,\, \mu(I_L),\mu(I_R)\le 2\mu(I)\}.
\]
Indeed, it is sufficient to choose $I_L$ and $I_R$ as the biggest intervals extending to the left and to the right of $I$ (and in case there is no biggest, one can choose a ``maximizing'' sequence and use inner regularity of $\mu$).\end{proof}

\begin{theorem}\label{thm:tau_vitali} Let $E\subset \Omega \subset \R^n$, with $\Omega$ open set, let $\mu^*: \Pcal(\R^n)\to [0,\infty]$ be an outer Radon measure on $\R^n$ and $\tau \in [0,1)$. Moreover, let $\Fcal$ be a cover of $E$ with closed cubes such that for every $x \in E$ there exists arbitrarily small cubes $Q \in \Fcal$ with $x \in \tau Q$.   
Then $\Fcal$ has the $\mu^*$-Vitali property, i.e., there exists a countable family $\Fcal_0 \subset \Fcal$ of cubes with pairwise disjoint closures such that 
\[
\mu^{*} \bigg(E \setminus \bigcup_{Q \in \Fcal_0} \overline{Q}\bigg) = 0.
\]
\end{theorem}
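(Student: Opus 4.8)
The plan is to combine a Besicovitch-type covering theorem adapted to the cubes of $\Fcal$ with a standard greedy exhaustion. First I would reduce to the case $\mu^*(E)<\infty$: since $\mu^*$ is Radon it is finite on compact subsets of $\Omega$, so writing $\Omega=\bigcup_k U_k$ with each $\overline{U_k}$ compact and contained in $U_{k+1}$, and carrying out the construction below inside the $U_k$ while at each step selecting only \emph{finitely many} cubes (so that the subfamily assembled so far is always a finite union of closed cubes, hence closed, with open complement), reduces the statement to the finite-measure case. Using the hypothesis that each $x\in E$ lies in $\tau Q$ for arbitrarily small $Q\in\Fcal$, I would also discard from $\Fcal$ all cubes of sidelength $>1$ without losing the covering property.

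The geometric ingredient is the elementary sandwich: if $x\in\tau Q$ then, in the coordinate frame of $Q$,
\[
    B\!\left(x,\tfrac{1-\tau}{2}\,\ell(Q)\right)\subseteq Q\subseteq B\!\left(x,\tfrac{(1+\tau)\sqrt n}{2}\,\ell(Q)\right),
\]
so that every cube of $\Fcal$ having $x$ in its $\tau$-core---irrespective of its orientation---is comparable to a Euclidean ball centered at $x$, with eccentricity $\lambda=\lambda(n,\tau):=\sqrt n\,(1+\tau)/(1-\tau)<\infty$. This places us exactly in the scope of the Besicovitch covering theorem in the form valid for families of sets of uniformly bounded eccentricity (cf.\ \cite{deguzman}; see also \cite{mattila} for the case of balls): choosing for each $x\in E$ one cube $Q_x\in\Fcal$ with $x\in\tau Q_x$, there is a constant $N=N(n,\tau)\in\mathbb N$ and subfamilies $\mathcal G_1,\dots,\mathcal G_N$ of $\{Q_x:x\in E\}$, each consisting of pairwise disjoint cubes, with $E\subseteq\bigcup_{i=1}^N\bigcup_{Q\in\mathcal G_i}Q$.

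Granting this, the exhaustion is routine. From $\mu^*(E)\le\sum_{i=1}^N\mu^*\big(E\cap\bigcup_{Q\in\mathcal G_i}Q\big)$ some $\mathcal G_j$ carries at least a $1/N$ fraction of $\mu^*(E)$; since $\mathcal G_j$ is countable and disjoint and the cubes are Borel, continuity of the measure $A\mapsto\mu^*(E\cap A)$ along increasing Borel unions yields finitely many pairwise disjoint cubes $Q_1,\dots,Q_m\in\mathcal G_j$ with $\mu^*\big(E\cap\bigcup_iQ_i\big)\ge\tfrac1{2N}\mu^*(E)$, whence $\mu^*\big(E\setminus\bigcup_iQ_i\big)\le(1-\tfrac1{2N})\mu^*(E)$. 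Setting $\Omega':=\Omega\setminus\bigcup_iQ_i$ (open, being $\Omega$ minus finitely many closed cubes), the restriction of $\Fcal$ to cubes contained in $\Omega'$ still satisfies the covering hypothesis relative to $E\cap\Omega'$, so I would iterate, obtaining after $k$ steps an uncovered part of $\mu^*$-measure $\le(1-\tfrac1{2N})^k\mu^*(E)\to0$. The union $\Fcal_0$ of all cubes selected over all steps is countable, and its cubes are pairwise disjoint closed cubes---those from one step because they come from a disjoint $\mathcal G_j$, those from different steps because each lies in the open complement of all previously selected closed cubes---so $\mu^*\big(E\setminus\bigcup_{Q\in\Fcal_0}\overline Q\big)=0$.

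The main obstacle is the covering input of the second paragraph. One cannot simply pass to the inscribed ball $B(x,\tfrac{1-\tau}{2}\ell(Q))$ and invoke the classical Besicovitch theorem for balls: that would extract disjoint \emph{balls}, but disjointness of these balls does not imply disjointness of the cubes they sit in, and for a non-doubling $\mu^*$ there is no volume-based enlargement estimate to repair the loss---whereas the conclusion demands a disjoint family drawn from $\Fcal$ itself. Hence one genuinely needs the bounded-eccentricity version of Besicovitch, which outputs a dimension-controlled number of disjoint subfamilies of the \emph{original} cubes; this is exactly where the hypothesis $\tau<1$ is used, as $\lambda(n,\tau)\to\infty$ when $\tau\to1$, in agreement with Remark~\ref{rmk:one_dim_tau_1}. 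It remains to check that $N$ depends only on $n$ and $\tau$, which holds since it is controlled solely by the dimension and the eccentricity bound.
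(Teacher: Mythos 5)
Your proof is correct and rests on the same geometric observation as the paper's: because $x\in\tau Q$ implies $\overline B(x,\tfrac{1-\tau}{2}\ell(Q))\subset\overline Q\subset\overline B(x,\sqrt n\,\ell(Q))$, the cubes of $\Fcal$ form a fine cover of uniformly bounded eccentricity (a $\gamma$-Morse cover with $\gamma$ of order $\sqrt n/(1-\tau)$), and this is exactly where the restriction $\tau<1$ enters. The difference is in how much of the covering machinery you reproduce. The paper invokes the Vitali-type covering theorem for $\gamma$-Morse covers (Fonseca--Leoni, Theorem~1.147) as a single black box and stops there; you instead invoke only the Besicovitch-type statement for bounded-eccentricity families and then carry out the standard greedy Besicovitch-to-Vitali exhaustion by hand, including the necessary reductions (localization to finite-measure pieces, restricting to cubes of sidelength $<1$, passing to the open complement of the finitely many cubes selected at each stage to keep the iteration going). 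Both routes are legitimate; yours is more self-contained and makes explicit why the conclusion cannot be obtained by replacing the cubes with their inscribed balls and applying the ball Besicovitch theorem, but it is correspondingly longer, while the paper's citation is shorter at the cost of appealing to a less standard reference. One small point worth spelling out in your write-up is the Carath\'eodory measurability of the (Borel) cubes, which is what justifies the subtraction $\mu^*(E\setminus\bigcup_i Q_i)\le(1-\tfrac{1}{2N})\mu^*(E)$ and the continuity along increasing unions for a general outer Radon measure.
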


\begin{proof}
We can apply \cite[Thm. 1.147]{Fonseca-Leoni}. Indeed, the cover $\Fcal$ is a fine $\gamma$-Morse cover (see \cite[Def. 1.137]{Fonseca-Leoni}), with $\gamma=\frac{2\sqrt{n}}{1-\tau}$. Indeed, if $x\in \tau Q$ then by elementary considerations $\overline{B}(x,\frac{1-\tau}{2}\ell(Q)) \subset \overline{Q} \subset \overline{B}(x,\sqrt{n} \ell(Q))$. 
\end{proof}

We now recall the following Lebesgue-differentiation type results. Notice the set $A$ in the next statement need not be measurable (see also \cite[Corollary 2.14(1) and Remark 2.15(2)]{mattila}).

\begin{theorem}\label{thm:tau_Lebesgue}
    Let $\Omega\subset \R^n$, and $\mu$ a Radon non-negative measure on $\Omega$. Fix $\tau\in[0,1)$ and a set $A\subset \Omega$. 
    Then for $\mu$-almost every $a\in A$ the following happens: for every family of cubes $(Q_r)_{r>0}$, with $a\in\tau Q_r$ and $\ell(Q_r)\to 0$ as $r\to0$, 
    \[
    \lim_{r\to0} \frac{\mu(A\cap Q_r)}{\mu(Q_r)}=1. 
    \]
\end{theorem}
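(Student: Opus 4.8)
The plan is to reduce the statement to the Lebesgue density theorem for a \emph{measurable} set and then prove that case by a Vitali-type covering argument built on Theorem~\ref{thm:tau_vitali}. For the reduction, note that both hypothesis and conclusion are local and that countable unions of $\mu$-null sets are $\mu$-null, so by exhausting $\Omega$ with relatively compact open subsets one may assume $\mu(\Omega)<\infty$; discarding the $\mu$-null set $\Omega\setminus\spt(\mu)$ one may also assume that every cube $Q$ with $a\in\tau Q$ satisfies $\mu(Q)>0$. The quantity $\mu(A\cap Q)$ is read as the outer measure $\mu^*(A\cap Q)$. Since $\mu$ is a finite Radon measure it is outer regular, and a standard argument (intersecting open sets shrinking to $A$) produces a $G_\delta$ hull $\tilde A\supseteq A$ with
\[
\mu^*(A\cap B)=\mu(\tilde A\cap B)\qquad\text{for every }\mu\text{-measurable }B .
\]
In particular $\mu^*(A\cap Q_r)/\mu(Q_r)=\mu(\tilde A\cap Q_r)/\mu(Q_r)$ for every cube $Q_r$, so the set of $a\in A$ at which the conclusion fails equals $A\cap F$, where $F=\bigcup_{k\ge1}F_k$ and
\[
F_k:=\Big\{a\in\Omega:\ \text{there exist cubes }Q\ \text{with}\ a\in\tau Q,\ \ell(Q)\ \text{arbitrarily small and}\ \mu(Q\setminus\tilde A)>\tfrac1k\,\mu(Q)\Big\}.
\]
Hence it suffices to show $\mu^*(\tilde A\cap F_k)=0$ for each $k$, because then $\mu^*(A\cap F)\le\mu^*(\tilde A\cap F)\le\sum_k\mu^*(\tilde A\cap F_k)=0$.

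\textbf{Covering argument.} Fix $k$, put $\lambda:=1/k$ and $B:=\tilde A\cap F_k$, and let $\sigma:=\mu\llcorner(\Omega\setminus\tilde A)$, a finite Radon measure with $\sigma^*(B)=0$ since $B\subseteq\tilde A$. Given $\varepsilon>0$, outer regularity of $\sigma$ yields an open set $U\subseteq\Omega$ with $B\subseteq U$ and $\mu(U\setminus\tilde A)=\sigma(U)<\varepsilon$. For $a\in B\subseteq U$ any cube $Q\ni a$ with $\ell(Q)$ small enough has $\overline Q\subseteq U$; moreover a witnessing cube $Q$ for $a$ can be replaced by a slightly smaller concentric cube $Q'$, which (since $a$ lies in the open cube $\tau Q$ and $\mu$ is continuous from below) still satisfies $a\in\tau Q'$ and $\mu(Q'\setminus\tilde A)>\lambda\,\mu(Q')$, and which may be chosen with $\mu(\partial Q')=0$ because at most countably many concentric cubes have boundary of positive $\mu$-measure. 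Consequently
\[
\Fcal:=\big\{\overline Q:\ \overline Q\subseteq U,\ \mu(\partial Q)=0,\ \mu(Q\setminus\tilde A)>\lambda\,\mu(Q),\ a\in\tau Q\ \text{for some}\ a\in B\big\}
\]
is a cover of $B$ by closed cubes in which every point of $B$ belongs to $\tau Q$ for arbitrarily small $Q\in\Fcal$. Theorem~\ref{thm:tau_vitali}, applied with $\mu^*$ the outer measure of $\mu$, yields countably many $\overline{Q_i}\in\Fcal$ with pairwise disjoint closures and $\mu^*\!\big(B\setminus\bigcup_i\overline{Q_i}\big)=0$. Using subadditivity of $\mu^*$, then $\mu(\partial Q_i)=0$, then the defining inequality of $\Fcal$, then disjointness, and finally $Q_i\subseteq U$, we get
\[
\mu^*(B)\le\sum_i\mu(\overline{Q_i})=\sum_i\mu(Q_i)<\frac1\lambda\sum_i\mu(Q_i\setminus\tilde A)=\frac1\lambda\,\mu\Big(\bigcup_i(Q_i\setminus\tilde A)\Big)\le\frac1\lambda\,\mu(U\setminus\tilde A)<\frac\varepsilon\lambda .
\]
Letting $\varepsilon\downarrow0$ gives $\mu^*(\tilde A\cap F_k)=0$, which completes the argument.

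\textbf{Main difficulty.} Once Theorem~\ref{thm:tau_vitali} is available the proof is measure-theoretic bookkeeping; the delicate points, rather than genuine obstacles, are the interplay between the (possible) non-measurability of $A$ and the covering theorem (forcing the passage to the Borel hull $\tilde A$ and the consistent use of $\mu^*$), the trapping of $\tilde A\cap F_k$ inside an open set that is almost contained in $\tilde A$, and the selection of Vitali cubes with $\mu$-negligible boundary so that switching between closed and open cubes in the final summation costs nothing. All the geometry, including the role of uncentered cubes where the restriction $\tau<1$ is felt, is absorbed into Theorem~\ref{thm:tau_vitali}.
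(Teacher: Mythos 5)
The paper does not supply a proof of this theorem: it is stated as a recalled result, with the remark that $A$ need not be measurable and a citation to Mattila (Corollary 2.14(1), Remark 2.15(2)). Your argument is a correct and careful reconstruction of that standard proof, adapted to the uncentered ($\tau$-)cube setting via Theorem~\ref{thm:tau_vitali}.

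In detail, all of the delicate points check out. The reduction to $\mu(\Omega)<\infty$ by exhaustion is legitimate since the conclusion is local and countable unions of null sets are null; discarding $\Omega\setminus\spt(\mu)$ guarantees $\mu(Q)>0$ whenever the open cube $Q$ contains $a$, which holds in particular when $a\in\tau Q$ for any $\tau\in[0,1)$. The Borel hull $\tilde A$ exists by outer regularity, and the identity $\mu^*(A\cap B)=\mu(\tilde A\cap B)$ for measurable $B$ follows from the Carathéodory splitting applied to $B$ together with $\mu^*(A)=\mu(\tilde A)<\infty$; this correctly reduces the density statement for $A$ to one for the Borel set $\tilde A$. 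The sets $F_k$ capture exactly the failure of density, and the restriction measure $\sigma=\mu\llcorner(\Omega\setminus\tilde A)$ satisfies $\sigma^*(B)=0$ for $B=\tilde A\cap F_k$, so an open $U\supseteq B$ with $\mu(U\setminus\tilde A)<\varepsilon$ exists. Shrinking each witnessing cube slightly preserves $a\in\tau Q'$ (since $\tau Q$ is open, or is the center if $\tau=0$), preserves $\mu(Q'\setminus\tilde A)>\lambda\mu(Q')$ by continuity from below, and can be done with $\mu(\partial Q')=0$ since disjoint concentric boundaries can have positive measure only countably often. Theorem~\ref{thm:tau_vitali} then produces disjoint closed cubes covering $\mu^*$-almost all of $B$, and the final chain of inequalities uses exactly $\mu(\partial Q_i)=0$ to pass from closed to open cubes, disjointness, the defining inequality $\mu(Q_i)<\lambda^{-1}\mu(Q_i\setminus\tilde A)$, and $Q_i\subseteq U$. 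Letting $\varepsilon\downarrow 0$ gives $\mu^*(\tilde A\cap F_k)=0$, and summing over $k$ finishes the proof. No gaps.
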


\begin{theorem}\label{thm:Lebesgue_diff_tau_cubes_functions} Let $\Omega \subset \R^n$ and $\mu$ a Radon non-negative measure on $\Omega$. Let $u$ be a locally $\mu$-integrable function. Then for $\mu$-a.e. $x \in \Omega$ the following happens: for every family of cubes $(Q_r)_{r>0}$, with $x\in\tau Q_r$ and $\ell(Q_r)\to 0$ as $r\to0$, 
    \[
    \lim_{r\to0} \fint_{Q_r} |u(y)-u(x)| \, d\mu(y) = 0  .
    \]
\end{theorem}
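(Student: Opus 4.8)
The plan is to reduce the statement to the already-established Vitali-type covering theorem for $\tau$-uncentered cubes (Theorem \ref{thm:tau_vitali}, applied with the outer measure $\mu^* = \mu$) via the classical maximal-function argument. First I would introduce the uncentered $\tau$-maximal operator associated with $\mu$: for a locally $\mu$-integrable $v \ge 0$, set
\[
M^\tau_\mu v(x) := \sup\Big\{ \fint_Q v \, d\mu : x \in \tau Q,\ \ell(Q) \le 1 \Big\}.
\]
The key analytic input is a weak-$(1,1)$ bound $\mu(\{M^\tau_\mu v > \lambda\}) \le \frac{C(n,\tau)}{\lambda}\int v\, d\mu$; this follows from Theorem \ref{thm:tau_vitali}, since for each $x$ in the superlevel set one can pick an admissible cube $Q_x$ with $x \in \tau Q_x$ and $\fint_{Q_x} v\, d\mu > \lambda$, and then extract a disjoint (or boundedly-overlapping) subfamily covering $\mu$-almost all of the superlevel set — exactly the conclusion of the $\tau$-Vitali theorem. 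One must be a little careful, as $\mu$ need not be doubling, but the $\gamma$-Morse structure exploited in the proof of Theorem \ref{thm:tau_vitali} (with $\gamma = \frac{2\sqrt n}{1-\tau}$) is precisely what makes the covering argument work without doubling, so the weak estimate goes through with a constant depending only on $n$ and $\tau$.

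With the weak-$(1,1)$ bound in hand, the argument is standard. Fix $\varepsilon > 0$. By density (e.g. Lusin's theorem, or density of continuous functions in $L^1_{\mathrm{loc}}(\mu)$ on a fixed bounded open piece of $\Omega$) write $u = g + h$ on a relatively compact subset, with $g$ continuous and $\|h\|_{L^1(\mu)} < \varepsilon$. For continuous $g$ the quantity $\fint_{Q_r}|g(y) - g(x)|\,d\mu(y)$ tends to $0$ as $\ell(Q_r)\to 0$ for \emph{every} $x$ and every admissible family $(Q_r)$, simply by uniform continuity of $g$ on a neighbourhood of $x$ together with $\mathrm{diam}(Q_r)\to 0$; here $x \in \tau Q_r$ only needs to guarantee $x$ stays in that neighbourhood, which it does since $Q_r$ shrinks to a point. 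For the error term, estimate
\[
\limsup_{r\to 0}\ \fint_{Q_r}|h(y)-h(x)|\,d\mu(y) \le M^\tau_\mu|h|(x) + |h(x)|,
\]
uniformly over admissible families. Hence the set where the desired limit fails for $u$ is contained, up to a $\mu$-null set coming from the bad set of $g$ (empty) and the points where $h$ is not $\mu$-approximately continuous, in
\[
\{M^\tau_\mu |h| > \sqrt\varepsilon\} \cup \{|h| > \sqrt\varepsilon\},
\]
which by the weak bound and Chebyshev has $\mu$-measure $\le (C(n,\tau)+1)\sqrt\varepsilon$. Letting $\varepsilon \downarrow 0$ along a sequence shows the exceptional set is $\mu$-null, which is the assertion.

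The main obstacle I anticipate is purely bookkeeping rather than conceptual: making the localization clean when $\Omega$ is a general open set and $\mu$ is merely Radon (so possibly not finite), which I would handle by exhausting $\Omega$ with relatively compact open sets and using that a countable union of $\mu$-null sets is $\mu$-null; and being careful that the weak-$(1,1)$ inequality is stated and used with uncentered cubes, for which the honest reference is the $\gamma$-Morse covering machinery already invoked in the proof of Theorem \ref{thm:tau_vitali} (cf. \cite[Thm.~1.147]{Fonseca-Leoni}). No new ideas beyond those are needed; in particular Theorem \ref{thm:tau_Lebesgue} is the special case $u = \1_A$ once one notes $\fint_{Q_r}|\1_A - \1_A(a)|\,d\mu \to 0$ at $\mu$-a.e. $a\in A$ forces $\mu(A\cap Q_r)/\mu(Q_r)\to 1$.
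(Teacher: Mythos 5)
The paper states Theorem~\ref{thm:Lebesgue_diff_tau_cubes_functions} in the appendix without proof, as a known Besicovitch-type differentiation result, so there is no internal argument to compare your proposal against; I will assess it on its own merits. The maximal-function route you take is the natural one, but there is a genuine gap in the weak-$(1,1)$ step: for each $x$ in $\{M^\tau_\mu|h|>\lambda\}$ you select a \emph{single} admissible cube $Q_x$ with $x\in\tau Q_x$ and $\fint_{Q_x}|h|\,d\mu>\lambda$ and then invoke Theorem~\ref{thm:tau_vitali}, but that theorem requires the cover to be \emph{fine} --- every point must admit arbitrarily small cubes $Q$ in the family with $x\in\tau Q$. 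The family $\{Q_x\}$ you construct gives one cube per point, and the average $\fint_Q|h|\,d\mu$ can very well drop below $\lambda$ on all smaller admissible cubes at $x$, so the hypothesis of Theorem~\ref{thm:tau_vitali} need not hold. Since $\mu$ is not assumed doubling, the classical finite-Vitali ($5r$-cover) workaround is not available either, so this is not merely a citation slip.

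Two small repairs keep the rest of your argument intact. First, replace $M^\tau_\mu$ by the restricted maximal function $\widetilde M^\tau_\mu v(x) \coloneqq \limsup_{\ell(Q)\to 0,\ x\in\tau Q}\fint_Q v\,d\mu$. Your inequality $\limsup_{r\to 0}\fint_{Q_r}|h-h(x)|\,d\mu \le M^\tau_\mu|h|(x)+|h(x)|$ only uses the $\limsup$, so nothing downstream changes, while now every $x$ with $\widetilde M^\tau_\mu|h|(x)>\lambda$ admits arbitrarily small cubes $Q$ with $x\in\tau Q$ and $\fint_Q|h|\,d\mu>\lambda$, and Theorem~\ref{thm:tau_vitali} applies verbatim (after the usual shrinking to arrange $\mu(\partial Q)=0$) to produce a disjoint subfamily covering $\mu$-a.e.\ of the superlevel set, giving $\mu(\{\widetilde M^\tau_\mu|h|>\lambda\})\le \lambda^{-1}\int|h|\,d\mu$ with constant $1$. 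Alternatively, bypass the Vitali form and appeal directly to the Besicovitch--Morse covering theorem that underlies Theorem~\ref{thm:tau_vitali} in \cite{Fonseca-Leoni}: it extracts boundedly many disjoint subfamilies covering the set from \emph{any} Morse cover (fine or not), which gives the weak bound for the full $M^\tau_\mu$ with a constant $N(n,\tau)$. With either fix, the Lusin decomposition $u=g+h$, the treatment of $g$ by uniform continuity, the $\sqrt\varepsilon$ bookkeeping, and the exhaustion of $\Omega$ by relatively compact open sets are all sound as you wrote them.
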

\addtocontents{toc}{\SkipTocEntry}
\subsection{A criterion for set-functionals} Let $\Omega$ be an open set in $\R^n$, and let $\Ocal(\Omega)$ be the family of all open subsets of $\Omega$.
\begin{theorem}[De Giorgi-Letta {\cite[Theorem 1.53]{AFP}}]\label{thm:De_Giorgi_Letta}
Let $\Fsf:\Ocal(\Omega)\to[0,\infty]$ be an increasing set function with $\Fsf(\emptyset)=0$. Assume that the following properties hold:
\begin{enumerate}[(i)]
    \item If $U,V\in\Ocal(\Omega)$ then $\Fsf(U\cup V)\leq \Fsf(U)+\Fsf(V)$ {\rm (subadditivity)};
    \item If $U,V\in\Ocal(\Omega)$ and $U\cap V=\emptyset$ then $\Fsf(U\cup V)\geq \Fsf(U)+\Fsf(V)$ {\rm (superadditivity)};
    \item $\Fsf(U)=\sup\{\Fsf(V): \,V\in\Ocal(\Omega),\, V\subset\subset U\}$ {\rm (inner regularity)}.
\end{enumerate}
Then there is a uniquely determined measure $\mu$ that extends $\Fsf$ to all Borel subsets, given by
\[
\mu(B):=\inf \{\Fsf(U):\,U\in\Ocal(\Omega),\, U\supset B\},\qquad\text{$B$ Borel}.
\]
\end{theorem}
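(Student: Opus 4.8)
The plan is to construct the extension by the classical Carathéodory procedure and then identify it with the stated formula. First I would define a set function $\mu^*$ on \emph{all} subsets $A\subset\Omega$ by $\mu^*(A):=\inf\{\Fsf(U):U\in\Ocal(\Omega),\ A\subset U\}$, and show that $\mu^*$ is an outer measure. Monotonicity and $\mu^*(\emptyset)=0$ are immediate from the corresponding properties of $\Fsf$; the only nontrivial point is countable subadditivity. Here I would, given $A\subset\bigcup_kA_k$ and $\eta>0$, choose open sets $U_k\supset A_k$ with $\Fsf(U_k)\le\mu^*(A_k)+\eta\,2^{-k}$ and reduce matters to the inequality $\Fsf(\bigcup_kU_k)\le\sum_k\Fsf(U_k)$. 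This I would obtain by first iterating the finite subadditivity (i) to get $\Fsf(V)\le\sum_{k=1}^N\Fsf(U_k)$ for every $V\subset\subset\bigcup_kU_k$ — using that $\overline V$ is compact, hence covered by finitely many of the $U_k$ — and then passing to the supremum over such $V$ via the inner regularity (iii).

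Next I would prove that every open set $U$ is $\mu^*$-measurable in the Carathéodory sense; by subadditivity it suffices to check $\mu^*(A)\ge\mu^*(A\cap U)+\mu^*(A\setminus U)$ for every $A$ with $\mu^*(A)<\infty$. Fixing $\eps>0$, I would pick an open $W\supset A$ with $\Fsf(W)\le\mu^*(A)+\eps$. Then $W\cap U$ is an open cover of $A\cap U$, so $\mu^*(A\cap U)\le\Fsf(W\cap U)$, and by inner regularity there is an open $V\subset\subset W\cap U$ with $\Fsf(V)\ge\mu^*(A\cap U)-\eps$. Since $\overline V\subset W\cap U\subset U$, the set $A\setminus U$ misses $\overline V$ and therefore lies in the \emph{open} set $W\setminus\overline V$, which gives $\mu^*(A\setminus U)\le\Fsf(W\setminus\overline V)$. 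Finally $V$ and $W\setminus\overline V$ are disjoint open subsets of $W$, so superadditivity (ii) and monotonicity yield $\Fsf(V)+\Fsf(W\setminus\overline V)\le\Fsf\big(V\cup(W\setminus\overline V)\big)\le\Fsf(W)$; chaining the three estimates gives $\mu^*(A\cap U)+\mu^*(A\setminus U)\le\Fsf(W)+\eps\le\mu^*(A)+2\eps$, and $\eps\to0$ closes the argument.

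With these two facts in hand I would invoke Carathéodory's theorem: the $\mu^*$-measurable sets form a $\sigma$-algebra on which $\mu^*$ is countably additive, and this $\sigma$-algebra contains all open sets, hence all of $\Bcal(\Omega)$. Restricting $\mu^*$ to $\Bcal(\Omega)$ produces the measure $\mu$. That $\mu$ extends $\Fsf$ is then easy: $\mu^*(U)\le\Fsf(U)$ by taking the trivial cover $W=U$, while monotonicity of $\Fsf$ forces $\Fsf(W)\ge\Fsf(U)$ for every open $W\supset U$, so $\mu^*(U)=\Fsf(U)$. Since the values of $\mu$ are literally the displayed infimum, $\mu$ is the uniquely determined extension produced by this construction, and any Borel measure agreeing with $\Fsf$ on open sets is dominated by it, with equality for the canonical (outer-regular) one.

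I expect the measurability step to be the main obstacle: the delicate point is that $A\setminus U$ need not be contained in any open set disjoint from $U$, which is exactly why one must first shrink $W\cap U$ to a compactly contained open $V$ and then use $W\setminus\overline V$ as an approximate open cover of $A\setminus U$ — and it is precisely at this juncture that both inner regularity and superadditivity are indispensable.
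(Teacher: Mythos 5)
The paper does not prove this statement: it is quoted verbatim from \cite[Theorem 1.53]{AFP} as a known criterion, so there is no in-paper argument to compare against. Your proposal is correct and is essentially the standard proof of the De Giorgi--Letta theorem (the one found in the cited reference): the Carath\'eodory outer measure $\mu^*(A)=\inf\{\Fsf(U):U\supset A\ \text{open}\}$, countable subadditivity via compact exhaustion plus inner regularity, and measurability of open sets via the $V\subset\subset W\cap U$ shrinking trick combined with superadditivity applied to the disjoint open pair $V$ and $W\setminus\overline V$. The only loose end is the final uniqueness remark, which is stated somewhat informally, but the theorem's own phrasing of ``uniquely determined'' is equally loose (the displayed formula is what pins $\mu$ down), so this is not a substantive gap.
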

\addtocontents{toc}{\SkipTocEntry}
\subsection{An elementary convergence lemma}
We record here the following observation (its proof is a modification of the standard upper semicontinuity property of weak convergence on compact sets \cite[Example~1.63]{AFP}): 

\begin{lemma}\label{lemma:rel_closed} Let $Q\subset \R^n$ be a cube. Let $(\mu_j)_j$ be a sequence of non-negative measures on $Q$ such that $\mu_j \weakstarto \mu$ and $\mu_j(Q) \to \mu(Q)$ as $j \to \infty$ for some finite measure $\mu$ on $Q$.
Then for every relatively closed set $F \subset Q$ it holds 
\[
\mu(F) \ge \limsup_{j \to +\infty} \mu_j(F).  
\]
In particular, for every Borel set $B \subset Q$ such that $\mu(\partial B \cap Q)=0$ it holds 
\[
\lim_{j\to+\infty} \mu_j(B)= \mu(B).
\]
\end{lemma}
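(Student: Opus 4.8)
The plan is to deduce the statement from the two standard one-sided semicontinuity properties of weak* convergence, using the no-mass-loss hypothesis $\mu_j(Q)\to\mu(Q)$ as the bridge between them; everything else is bookkeeping with the subspace topology of the \emph{open} cube $Q$.

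\emph{Step 1 (lower bound on relatively open sets).} First I would record that $\liminf_j\mu_j(V)\ge\mu(V)$ for every $V\subseteq Q$ that is open. Since $Q$ is open, a relatively open $V\subseteq Q$ is open in $\R^n$, so this is exactly the classical fact behind \cite[Example~1.63]{AFP}: for $\varphi\in C_c(V)$ with $0\le\varphi\le1$ one has $\mu_j(V)\ge\int\varphi\,d\mu_j\to\int\varphi\,d\mu$, whence $\liminf_j\mu_j(V)\ge\int\varphi\,d\mu$, and taking the supremum over such $\varphi$ gives $\liminf_j\mu_j(V)\ge\mu(V)$ by inner regularity of the finite Radon measure $\mu$. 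Note the case $V=Q$ is included.

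\emph{Step 2 (upper bound on relatively closed sets).} For $F\subseteq Q$ relatively closed, the set $V:=Q\setminus F$ is relatively open, hence open. Using that $\mu(Q)<\infty$ and $\mu_j(Q)\to\mu(Q)$, I would compute
\[
\limsup_{j}\mu_j(F)=\limsup_{j}\big(\mu_j(Q)-\mu_j(V)\big)=\lim_{j}\mu_j(Q)-\liminf_{j}\mu_j(V)\le\mu(Q)-\mu(V)=\mu(F),
\]
where the middle equality is legitimate because $\lim_j\mu_j(Q)$ exists and is finite, and the inequality is Step~1 applied to $V$. This gives the first assertion. I expect this to be the only genuinely delicate point: the hypothesis $\mu_j(Q)\to\mu(Q)$ is indispensable here, since otherwise mass escaping towards $\partial Q$ destroys the bound (on $Q=(0,1)$, take $\mu_j=\delta_{1/j}$ and $F=(0,\tfrac12]$: then $\mu_j(F)=1$ for all $j\ge2$ while the weak* limit is $\mu=0$).

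\emph{Step 3 (the ``in particular'' part).} Given a Borel set $B\subseteq Q$ with $\mu(\partial B\cap Q)=0$, I would sandwich it between its interior $\Int(B)$, which is open because $\Int(B)\subseteq B\subseteq Q$, and its relative closure $\overline{B}\cap Q$, which is relatively closed in $Q$; since $(\overline{B}\cap Q)\setminus\Int(B)=\partial B\cap Q$ is $\mu$-null, we get $\mu(\Int B)=\mu(B)=\mu(\overline{B}\cap Q)$. Then Step~1 applied to $\Int(B)$ gives $\liminf_j\mu_j(B)\ge\liminf_j\mu_j(\Int B)\ge\mu(\Int B)=\mu(B)$, while Step~2 applied to $\overline{B}\cap Q$ gives $\limsup_j\mu_j(B)\le\limsup_j\mu_j(\overline{B}\cap Q)\le\mu(\overline{B}\cap Q)=\mu(B)$. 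Combining the two inequalities yields $\lim_j\mu_j(B)=\mu(B)$, as claimed.
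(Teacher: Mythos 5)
Your proof is correct, and it fills in exactly the ``modification'' of \cite[Example~1.63]{AFP} that the paper alludes to without spelling out: you use the mass-conservation hypothesis $\mu_j(Q)\to\mu(Q)$ to convert the standard lower-semicontinuity bound on open sets into an upper bound on relatively closed (but possibly non-compact) subsets of the open cube via complementation, and then sandwich a set with $\mu$-null relative boundary between its interior and relative closure. The counterexample $\mu_j=\delta_{1/j}$ on $(0,1)$ is a nice touch confirming the hypothesis is not decorative.
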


\vspace{0.5cm}

\section{An example of non-existence of \texorpdfstring{$\Ksf_0$}{K0} for a \texorpdfstring{$BV$}{BV} function}\label{app:Cantor}

\begin{lemma}
    There exists a purely cantorian $u \in BV(0,1)$, with $|Du|((0,1))=1$, satisfying
    \[
        \liminf_{\e\to 0} \Ksf_\eps(u) = \frac 14 \quad \text{and} \quad \limsup_{\e \to 0} \Ksf_\eps(u) = \frac 12.
    \]
\end{lemma}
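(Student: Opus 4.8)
The plan is to realize $u$ as the primitive of a carefully designed Cantor measure $\mu$ on $[0,1]$, and to detect the oscillation of $\eps\mapsto\Ksf_\eps(u)$ along two interleaved families of scales: one at which $\mu$ locally ``looks uniform'' (forcing $\Ksf_\eps(u)\to\tfrac14$), and one at which it locally ``looks like a single mass at the midpoint'' (forcing $\Ksf_\eps(u)\to\tfrac12$). Concretely, it is enough to produce a non-atomic probability measure $\mu$ on $[0,1]$ that is singular with respect to $\Lcal^1$: then $u(x):=\mu([0,x])$ lies in $BV(0,1)$, is purely cantorian (non-atomicity kills the jump part of $Du$, singularity the absolutely continuous part), and $|Du|((0,1))=\mu((0,1))=1$. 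Since $n=1$ we have $\Ksf_\eps(u)=\sup\sum_I\Osc(u,I)$, the supremum over finite disjoint families of intervals $I\subset(0,1)$ of length $\eps$, and for $I=(a,a+\eps)$ the affine invariance of $\Osc$ gives $\Osc(u,I)=\mu(I)\,\Osc(\tilde u_I,(0,1))$, where $\tilde u_I(t):=\mu([a,a+t\eps])/\mu(I)$ is the normalized primitive of $\mu\llcorner I$ read on $(0,1)$. I will use three elementary facts: $\Osc(\tilde u_I,(0,1))=\tfrac14$ when $\tilde u_I$ is affine (i.e.\ $\mu$ uniform on $I$); $\Osc(\tilde u_I,(0,1))=\tfrac12$ when $\tilde u_I$ is the Heaviside jump at $\tfrac12$ (i.e.\ $\mu\llcorner I$ a midpoint Dirac); and $|\Osc(v_1,(0,1))-\Osc(v_2,(0,1))|\le 2\|v_1-v_2\|_{L^1(0,1)}$, so both values are stable under $L^1$-perturbations of the primitive. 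Finally, \eqref{eq:FMS} (applied, e.g., to the extension of $u$ by constants to $BV(\R)$) already gives $\tfrac14\le\liminf_{\eps\to0}\Ksf_\eps(u)\le\limsup_{\eps\to0}\Ksf_\eps(u)\le\tfrac12$; so it suffices to exhibit scales $\sigma_m\to0$ with $\Ksf_{\sigma_m}(u)\to\tfrac14$ and scales $\pi_m\to0$ with $\Ksf_{\pi_m}(u)\to\tfrac12$.

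\emph{The construction.} Fix integers $M_m\to\infty$ and reals $\eta_m\in(0,\tfrac12]$ with $\eta_m\to0$. Starting from $E_0=[0,1]$, obtain $E_m$ from $E_{m-1}$ by the rule $R_m$: each connected component $J$ of $E_{m-1}$, a closed interval of length $\ell$, is replaced by $M_m$ closed \emph{children}, the $j$-th being the interval of length $\eta_m\ell/M_m$ centered at the midpoint of the $j$-th piece of the equipartition of $J$ into $M_m$ equal pieces. Set $\mu_m:=\Lcal^1\llcorner E_m/\Lcal^1(E_m)$; since $\mu_{m+1}$ and $\mu_m$ agree on every gen-$m$ interval while the latter have diameter $\to0$, the $\mu_m$ converge weakly-$*$ to a probability measure $\mu$ supported on $\bigcap_mE_m$. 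Each of the $N_m:=M_1\cdots M_m$ pairwise disjoint closed gen-$m$ intervals carries $\mu$-mass $1/N_m\to0$, so $\mu$ is non-atomic, while $\Lcal^1(E_m)=\prod_{i\le m}\eta_i\to0$ gives $\mu\perp\Lcal^1$. Writing $\delta_m$ for the common length of the gen-$m$ intervals, inside a fixed gen-$(m-1)$ interval the children have length $\delta_m=\eta_m\delta_{m-1}/M_m$ and their centers are $\delta_{m-1}/M_m$ apart; moreover consecutive gen-$(m-1)$ intervals are separated by a gap $\ge\tfrac12\delta_{m-1}$. We now take
\[
\sigma_m:=\frac{\delta_{m-1}}{\sqrt{M_m}}\in\Big(\frac{\delta_{m-1}}{M_m},\,\delta_{m-1}\Big),\qquad
\pi_m:=\frac{\delta_m}{\sqrt{\eta_m}}\in\Big(\delta_m,\,\frac{\delta_{m-1}}{M_m}\Big),
\]
both of which tend to $0$.

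\emph{The two estimates.} For the lower bound at scale $\pi_m$ use the explicit competitor $\{I_p\}_{p=1}^{N_m}$, where $I_p$ is the interval of length $\pi_m$ centered at the center of the $p$-th gen-$m$ interval. Neighboring gen-$m$ centers are $\delta_{m-1}/M_m\gg\pi_m$ apart, so the $I_p$ are disjoint; since $\mu$ lives on the union of the gen-$m$ intervals, each of length $\delta_m<\pi_m$ and centered in its $I_p$, we get $\mu(I_p)=1/N_m$ and $\sum_p\mu(I_p)=1$. Furthermore $\tilde u_{I_p}$ is the primitive of a probability measure supported in $[\tfrac12-\tfrac{\sqrt{\eta_m}}{2},\tfrac12+\tfrac{\sqrt{\eta_m}}{2}]$, hence within $L^1$-distance $\sqrt{\eta_m}$ of the midpoint Heaviside, so $\Osc(u,I_p)\ge(\tfrac12-2\sqrt{\eta_m})\mu(I_p)$, giving $\Ksf_{\pi_m}(u)\ge\tfrac12-2\sqrt{\eta_m}\to\tfrac12$. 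For the upper bound at scale $\sigma_m$ one must bound \emph{every} disjoint family of $\sigma_m$-intervals. Since $\sigma_m\ll\delta_{m-1}$ and the gaps between gen-$(m-1)$ intervals are $\gg\sigma_m$, each member $I$ of the family is either contained in a single gen-$(m-1)$ interval (``good''), or meets the boundary of exactly one (``bad''), or carries no mass. At scale $\sigma_m$ a gen-$m$ interval together with all its descendants — of total diameter $\delta_m\ll\sigma_m$ — is indistinguishable from a point mass $\tfrac1{N_m}$ at its center; a good $I$ meets $\approx\sqrt{M_m}$ consecutive, equally spaced such centers, which moreover essentially span $I$ (the first and last lie within $\delta_{m-1}/M_m$ of $\partial I$), so $\tilde u_I$ lies within $L^1$-distance $C/\sqrt{M_m}$ of the affine primitive and $\Osc(u,I)\le(\tfrac14+C/\sqrt{M_m})\mu(I)$. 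On a bad $I$ we use only Poincaré $\Osc(u,I)\le\tfrac12\mu(I)$; as any point lies in at most one member of a disjoint family there are $\le2N_{m-1}$ bad intervals, each of $\mu$-mass $\le2\sqrt{M_m}/N_m$, so their total mass is $\le4/\sqrt{M_m}$. Summing, $\sum_I\Osc(u,I)\le\tfrac14+C'/\sqrt{M_m}$, hence $\Ksf_{\sigma_m}(u)\le\tfrac14+C'/\sqrt{M_m}\to\tfrac14$. Together with the a priori bounds this gives $\liminf_{\eps\to0}\Ksf_\eps(u)=\tfrac14$ and $\limsup_{\eps\to0}\Ksf_\eps(u)=\tfrac12$.

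\emph{Main obstacle.} The delicate point is the $\sigma_m$-scale \emph{upper} bound, because it has to hold for every admissible family: one must make precise, uniformly in the chosen interval, that $\mu$ restricted to a $\sigma_m$-window genuinely resembles $\approx\sqrt{M_m}$ equally weighted, equally spaced point masses that span the window, and one must control the mass that can hide in intervals straddling the boundary of a gen-$(m-1)$ interval. This is exactly why the relative sizes in $R_m$ — cluster length $\delta_m$ versus cluster spacing $\delta_{m-1}/M_m$ versus the probing scales $\sigma_m,\pi_m$ — must be chosen as above. Reassuringly, the resulting error terms are all of size $O(1/\sqrt{M_m})$ or $O(\sqrt{\eta_m})$, uniformly in $m$, so nothing beyond $M_m\to\infty$ and $\eta_m\to0$ (with $\eta_m\le\tfrac12$) is required.
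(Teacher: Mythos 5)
Your construction is essentially the same iterated‐equipartition Cantor scheme as the paper's (they fix the contraction ratio at $1/k_i$ where yours allows $\eta_m$ and $M_m$ to vary independently), and you detect the two limit values along the same two families of scales: the cluster spacing $\delta_{m-1}/M_m$ (up to a harmless $\sqrt{\eta_m}$ factor) for the jump/Heaviside limit $\tfrac12$, and the geometric mean $\delta_{m-1}/\sqrt{M_m}$ of cluster spacing and parent size for the affine limit $\tfrac14$. The argument is correct, and your explicit good/bad-interval decomposition at the affine scale in fact makes rigorous the step the paper only sketches (``the total contribution of the intervals that contain both the almost-affine part and the constant part is negligible'').
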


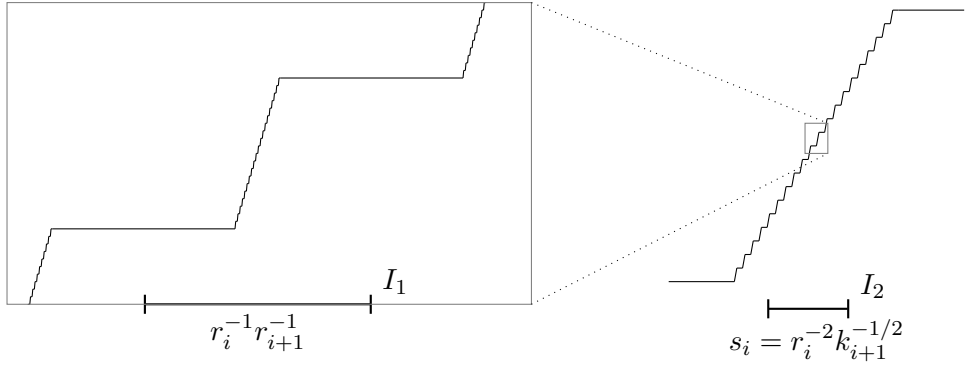
\begin{figure}
\centering
\begin{tikzpicture}[xscale=0.6,yscale=2]

    \def\m{20}
    \def\n{1}
    \def\r{0.3}
    \def\s{5}

    \begin{scope}
    \clip(-5,-0.5) rectangle (6.5,1.5);
    \foreach \j in {-1,...,\n}
    {
    \begin{scope}[shift={({\s*\j},{\j})}]
    \foreach \i in {1,...,\m}
    {
        \draw ({(\i-1)/\m},{(\i-1)/\m})--({(\i-1+\r)/\m},{(\i)/\m});
        \draw ({(\i-1+\r)/\m},{(\i)/\m})--({(\i)/\m},{(\i)/\m});
    };
    \draw({-\s+1},0)--(0,0);
    \end{scope}
    };
    \end{scope}
    
    \draw[thick,|-|] ({(-\s+1)/2},-0.5)--({(\s+1)/2},-0.5) node[above right]{$I_1$};
    \node[below] at (0.5,-0.5) {$r_i^{-1}r_{i+1}^{-1}$};


    \def\p{20}
    \def\n{1}
    \def\r{0.3}
    \def\s{12}


    \begin{scope}[shift={(10.95,-0.35)},scale=1.8,xscale=2]
    \foreach \i in {1,...,\p}
    {
        \draw ({(\i-1)/\p},{(\i-1)/\p})--({(\i-1+\r)/\p},{(\i)/\p});
        \draw ({(\i-1+\r)/\p},{(\i)/\p})--({(\i)/\p},{(\i)/\p});
    };
    \draw({-0.4},0)--(0,0);
    \draw (1,1)--(1.4,1);
    \draw[thick,|-|] (0.2,-0.1)--(0.7,-0.1) node[above right]{$I_2$};
    \node[below] at (0.5,-0.1) {$s_i=r_i^{-2}k_{i+1}^{-1/2}$};
    \end{scope}

    \draw[gray] (-5,-0.5) rectangle (6.5,1.5);
    \draw[gray] (12.5,0.5) rectangle (13,0.7);
    \draw[dotted] (6.5,1.5)--(13,0.7);
    \draw[dotted] (6.5,-0.5)--(13,0.5);
\end{tikzpicture}
\caption{On the left: on the interval $I_1$ of scale $r_i^{-1}k_i^{-1}$ the function is close to a jump. On the right: on the interval $I_2$ of intermediate scale $r_i^{-1}$ the function looks affine.}\label{fig:Cantor}
\end{figure}

\begin{proof}
    The construction consists of several steps.

    \emph{Step 1.} 
    Let $I = [a,b] \subset [0,1]$ be a given interval and let $k$ be a given positive integer. We define $I^k \subset [0,1]$ as the (disjoint) union of  the $k$  closed intervals $\{[a_j, a_j+ k^{-2}]\}_{j = 1}^k$ where the $\{a_j\}_{j = 1}^k$ are equi-distributed (i.e.,  $a_j - a_{j-1} = |I|k^{-1}$) and $a_1 = a$ is the left extreme of $I$.
    Notice that 
    \[
        |I^k| = k \times |I|k^{-2} = \frac{|I|}{k}.
    \]

    \emph{Step 2.} Let $\{k_i\}_{i = 1}^\infty$ be a sequence of positive integers that are growing sufficiently fast, for instance such that 
    \[
        \frac{k_i^2}{k_{i+1}}\le \frac 1{2^i}\,.
    \]
    We define $J_0 \coloneqq [0,1]$ and we define the $J_i$ inductively. Assuming that $J_i$ is a union of $r_i = k_0 \times \cdots \times k_{i}$ disjoint intervals (with $k_0=1$) of the same length, we define $J_{i+1} \subset J_i$ as follows: Denoting by $J_i(r)$, with $r = 1,\dots,r_i$, the disjoint sub-intervals making up $J_i$, we set
    \[
        J_{i+1} = \bigcup_{r = 1}^{r_i} J_i(r)^{k_{i+1}}.
    \]
    The new set $J_{i + 1}$ consists of $r_i \times k_{i+1} = r_{i+1}$ disjoint intervals. Since all $J_i(r)$ have the same length it follows from the definition in Step 1 that all $J_{i+1}(r)$ are also of the same length. Moreover, since $|J_i(r)|^{k_{i+1}} = k_{i+1}^{-1}|J_i(r)|$, we have
    \[
        |J_{i+1}| =  \frac{|J_i|}{k_{i+1}} = \frac{|J_{i-1}|}{k_{i+1} k_{i}} \cdots = \frac{|J_1|}{k_{i+1}\times \cdots \times k_2} = \frac{1}{r_{i+1}}\,.
    \]
    
    \emph{Step 3.} For each $i \in \mathbb N$, we define a probability density $h_i : [0,1] \to \R$ by setting $h_i = r_i \1_{J_i}$. 
    We  then consider its primitive $u_i : [0,1] \to \R$ defined by 
    \[
        u_i(t) \coloneqq \int_0^t h_i(x) \, dx\,,
    \]
    which defines a continuous, non-decreasing function. By construction $u_i(0) = 0$, $u_i(1) = 1$ and $u_i$ has bounded variation $|Du_i|(0,1) = \|h_i\|_{L^1} = 1$. It is not hard to see that $u_{i + 1} 
    \ge u_i$ and
    \[
        u_{i+1}(t) - u_i(t) \le \frac{1}{k_i k_{i+1}}\le \frac1{k_{i + 1}}\,.
    \]
    This, in turn, gives (assuming that $m \ge n$)
    \[
        \|u_m - u_n\|_\infty \le \sum_{i = n+1}^{m} \frac1{k_{i}} \le \sum_{i = n+1}^{m} \frac1{2^{i-1}}\le \frac{1}{2^{n-1}}.
    \]
    It follows that the sequence $\{u_i\}_{i = 1}^\infty$ is Cauchy in $C^0$, and thus, it converges to some continuous $u$ with $u(0)=0$ and $u(1)=1$, which is also non-decreasing. Hence, $u$ is $BV$ and $D^j u=0$.
    Moreover,
    \[
        \spt(|Du|) = J_\infty \coloneqq \bigcap_{i = 1}^\infty J_i\,.
    \]
    Since $|J_\infty| \le |J_i| \to 0$, we further deduce that $D^au = 0$. This proves that $u$ is purely cantorian.

    \emph{Step 4.} 
    By construction, at scale $r_{i}^{-1}r_{i+1}^{-1}$, $u$ is closer and closer to a function that jumps (see interval $I_1$ in Figure \ref{fig:Cantor}), and therefore
    \[
        \lim_{i\to\infty}\Ksf_{{r_{i}^{-1}r_{i+1}^{-1}}}(u)= \frac 12.
    \] 
    On the other side, at { an intermediate scale $s_i$ with $r_{i}^{-1}r_{i+1}^{-1}\ll s_i\ll r_{i}^{-2}$, say $s_i=r_i^{-2}k_{i+1}^{-1/2}$}, $u$ looks pretty much like a piecewise linear map oscillating between constants and affine maps of the same slope (see interval $I_2$ in Figure \ref{fig:Cantor}). Observe that the total contribution of the intervals that contain both the almost-affine part and the constant part is negligible in the limit $i\to \infty$. It follows that
    \[
        \lim_{i\to\infty}\Ksf_{{r_i^{-2}k_{i+1}^{-1/2}}}(u) = \frac14.\qedhere
    \]
\end{proof}

\bibliographystyle{plain}
\bibliography{biblio}

\end{document}